\newtheorem{propappend}{Proposition}
\newtheorem{lemappend}[propappend]{Lemma}
\newtheorem{thm}{Theorem}[section]
\newtheorem{prop}[thm]{Proposition}
\newtheorem{lem}[thm]{Lemma}
\newtheorem{cor}[thm]{Corollary}
\theoremstyle{definition}
\newtheorem{defn}[thm]{Definition}
\newtheorem{rem}[thm]{Remark}
\newtheorem{ques}[thm]{Question}
\newcommand{\abs}[1]{\lvert{#1}\rvert}
\renewcommand{\bar}[1]{\overline{#1}}
\newcommand{\presentation}[2]{\langle\, {#1} \mid {#2} \,\rangle}
\newcommand{\bigset}[2]{ \bigl\{ \, {#1} \bigm| {#2} \, \bigr\} }
\renewcommand{\emptyset}{\varnothing}
\newcommand{\field}[1]{\mathbb{#1}}
\newcommand{\Z}{\field{Z}}
\newcommand{\R}{\field{R}}
\DeclareMathOperator{\supp}{supp}
\DeclareMathOperator{\St}{St}
\DeclareMathOperator{\csupp}{csupp}
\DeclareMathOperator{\CAT}{CAT}
\DeclareMathOperator{\Lk}{Lk}
\DeclareMathOperator{\diam}{diam}
\begin{document}
%\font\myfont=cmr12 at 8.5pt

\title[Malnormality and join-free subgroups]{Malnormality and join-free subgroups in right-angled Coxeter groups}

\author{Hung Cong Tran}
\address{Department of Mathematics\\
 The University of Georgia\\
1023 D. W. Brooks Drive\\
Athens, GA 30605\\
USA}
\email{hung.tran@uga.edu}

\date{\today}

\begin{abstract}
%Motivated by the problem about the quasiconvexity possibility of malnormal subgroups in (cubulated) hyperbolic group we study join-free subgroups of right-angled Coxeter groups. 
In this paper, we prove that all finitely generated malnormal subgroups of one-ended right-angled Coxeter groups are strongly quasiconvex and they are in particular quasiconvex when the ambient groups are hyperbolic. The key idea is to prove all infinite proper malnormal subgroups of one-ended right-angled Coxeter groups are join-free and then prove the strong quasiconvexity and the virtual freeness of these subgroups. We also study the subgroup divergence of join-free subgroups in right-angled Coxeter groups and compare them with the analogous subgroups in right-angled Artin groups. We characterize almost malnormal parabolic subgroups in terms of their defining graphs and also recognize them as strongly quasiconvex subgroups by the recent work of Genevois and Russell-Spriano-Tran. Finally, we discuss some results on hyperbolically embedded subgroups in right-angled Coxeter groups.
\end{abstract}

\subjclass[2000]{%
20F67, % Hyperbolic groups and nonpositively curved groups
20F65} % Geometric group theory
\maketitle

\section{Introduction}

It is well-known that quasiconvex subgroups of hyperbolic groups have finite height (see \cite {MR1389776}). The \emph{height} of a subgroup $H$ in a group $G$ is the smallest number $n$ with the property that for any $(n+1)$ distinct left cosets $g_1H, g_2H\cdots, g_{n+1}H$ the intersection $\bigcap g_iH$ is always finite. Swarup asked if the converse is true:

\begin{ques}[\cite{Bestvina}]
\label{q1} 
Let $G$ be a hyperbolic group and $H$ a finitely generated subgroup. If $H$ has finite height, is $H$ quasiconvex?
\end{ques}

Gitik stated that the problem is open even when $H$ is malnormal in $G$. A subgroup $H$ of a group $G$ is \emph{malnormal} if $gHg^{-1}\cap H$ is trivial for each $g$ not in $H$. Wise and Agol also suggested one could attempt to answer it for hyperbolic \emph{virtually special groups} (i.e. groups that virtually embed into some right-angled Artin group), but even that seems tricky.

\begin{ques}
\label{q2} 
Is a malnormal finitely generated subgroup of a hyperbolic (virtually special) group quasiconvex?
\end{ques}
 
We observe that the above two questions can be extended to the analogous subgroups of arbitrary finitely generated groups. In \cite{MR3426695}, Durham-Taylor introduced a strong notion of quasiconvexity in finitely generated groups, called \emph{stability}, which is preserved under quasi-isometry, and which agrees with quasiconvexity when the ambient group is hyperbolic. However, a stable subgroup of a finitely generated group is always hyperbolic regardless of the geometry of the ambient group (see \cite{MR3426695}). Thus, the geometry of a stable subgroup does not completely reflect that of the ambient group. Therefore, the author~\cite{Tran2017} and Genevois~\cite{Genevois2017} independently introduced another concept of quasiconvexity, called \emph{strong quasiconvexity}, which is strong enough to be preserved under quasi-isometry and relaxed enough to capture the geometry of ambient groups.

\begin{defn}
Let $G$ be a finitely generated group and $H$ a subgroup of $G$. We say $H$ is \emph{strongly quasiconvex} in $G$ if for every $K \geq 1,C \geq 0$ there is some $M = M(K,C)$ such that every $(K,C)$--quasi–geodesic in $G$ with endpoints on $H$ is contained in the $M$--neighborhood of $H$.
\end{defn}

In \cite{Tran2017}, the author also characterized stable subgroups as hyperbolic strongly quasiconvex subgroups. He also proved that strongly quasiconvex subgroups of finitely generated groups also have finite height (see Theorem 1.2 in \cite{Tran2017}). Therefore, it is reasonable to extend Question~\ref{q1} and Question~\ref{q2} to strongly quasiconvex subgroups of finitely generated groups.

\begin{ques}
\label{q3} 
Let $G$ be a finitely generated (virtually special) group and $H$ a finitely generated subgroup. If $H$ has finite height (or $H$ is malnormal), is $H$ strongly quasiconvex?
\end{ques}

We note that the work of the author in \cite{Tran2017} implicitly gave the positive answer to Question~\ref{q3} for one-ended right-angled Artin groups. More precisely, if a finitely generated subgroup of a one-ended right-angled Artin group has finite height, then it is strongly quasiconvex. In the Appendix, we give an explicit proof of this fact. Moreover, we provide necessary conditions for finite height subgroups of groups satisfying certain conditions (see Proposition~\ref{fhimpliessq} and Lemma~\ref{simplecase}) and we hope this may be useful for someone who wants to attack Question~\ref{q3} for different group collections in future.

\subsection{Malnormality in right-angled Coxeter groups}
The positive answer for Question~\ref{q3} for one-ended right-angled Artin groups motivate us to work on the richer collection of groups, called right-angled Coxeter groups. For each finite simplicial graph $\Gamma$ the associated \emph{right-angled Coxeter group} $G_\Gamma$ has generating set $S$ equal to the vertices of $\Gamma$, relations $s^2=1$ for each $s$ in $S$ and relations $st = ts$ whenever $s$ and $t$ are adjacent vertices. \textbf{In this paper, we assume all graphs that define some right-angled Coxeter group are finite and simplicial.} In contrast to right-angled Artin groups, the collection of right-angled Coxeter groups contains numerous hyperbolic groups, relatively hyperbolic groups, and thick groups of arbitrary orders. Right-angled Coxeter groups also provide a rich source of cubical groups and any results on this collection can shed light on extensions to all cubical groups. By some recent work on characterizing strongly quasiconvex parabolic subgroups of right-angled Coxeter groups (see Proposition 4.9 in \cite{Genevois2017} or Theorem 7.5 in \cite{JDT2018}) we can prove easily that a finite height parabolic subgroup of a right-angled Coxeter group is strongly quasiconvex (see Proposition~\ref{fhpsracgs}). However, it seems difficult to extend this result to arbitrary finite height subgroups of right-angled Coxeter groups. Therefore, we focus our work on their malnormal subgroups and we obtain a positive answer.

\begin{thm}
\label{motivation1}
Let $\Gamma$ be a connected graph and $H$ a finitely generated malnormal subgroup of the right-angled Coxeter group $G_\Gamma$. Then $H$ is strongly quasiconvex. Moreover, if $H$ is a proper subgroup, then $H$ is virtually free (therefore, $H$ is also stable).
\end{thm}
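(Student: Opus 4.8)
The plan is to reduce to the one-ended case and then run a two-stage argument: first use malnormality to force $H$ to be join-free, and then extract strong quasiconvexity and virtual freeness from join-freeness. To begin, I would dispose of the degenerate cases. If $H$ is finite or $H=G_\Gamma$ there is nothing to prove, so assume $H$ is infinite and proper; since $H$ contains the centralizers of its infinite-order elements (see below), such an $H$ has infinite index. If $\Gamma$ is complete then $G_\Gamma$ is finite, and if $G_\Gamma$ is two-ended it is hyperbolic and virtually free, so every finitely generated subgroup is quasiconvex and virtually free. In the remaining reducible case $\Gamma$ has a separating clique and $G_\Gamma$ splits as a finite graph of groups with finite (spherical) edge groups whose vertex groups are right-angled Coxeter groups on the maximal non-clique-separated subgraphs, i.e. on graphs that are complete or connected without a separating clique. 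Letting $H$ act on the associated Bass--Serre tree and invoking accessibility, I would reduce to the case in which $G_\Gamma$ is one-ended: either $H$ is conjugate into a vertex group, where it is again malnormal and the one-ended-ambient case applies, or $H$ is assembled from finitely many such pieces over finite edge groups, which already forces virtual freeness. So assume henceforth that $G_\Gamma$ is one-ended, i.e. $\Gamma$ is connected with no separating clique.

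The engine of the whole argument is the following consequence of malnormality: for every infinite-order $h\in H$ one has $C_{G_\Gamma}(h)\le H$. Indeed, if $c$ centralizes $h$ then $h\in cHc^{-1}\cap H$ with $h\ne 1$, so malnormality gives $c\in H$. I would use this to prove that $H$ is join-free. Suppose instead that $H$ contains a join, i.e. (after conjugating $H$, which preserves malnormality and properness, and using that quasi-flats in right-angled Coxeter groups are conjugate into standard join special subgroups) that $H$ meets a standard special subgroup $G_{\Lambda_1 * \Lambda_2}=G_{\Lambda_1}\times G_{\Lambda_2}$ in an infinite subgroup of each factor, with both $\Lambda_i$ non-complete. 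Choosing an infinite-order $a\in G_{\Lambda_1}\cap H$ whose support is all of $\Lambda_1$, the centralizer lemma yields $G_{\Lambda_2}\le C_{G_\Gamma}(a)\le H$ and symmetrically $G_{\Lambda_1}\le H$, so the full join $G_{\Lambda_1 * \Lambda_2}$ lies in $H$. Now I would propagate: every vertex $u$ adjacent to all of $\Lambda_1$ commutes with $a$, hence $u\in C_{G_\Gamma}(a)\le H$; this enlarges the set of vertices of $\Gamma$ lying in $H$, produces new non-adjacent pairs and new infinite-order elements, and the process repeats. The content is that, because $\Gamma$ is connected with no separating clique, this centralizer-closure cannot stabilize at a proper full subgraph and therefore exhausts the vertex set of $\Gamma$, forcing $H=G_\Gamma$ and contradicting properness. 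Making this propagation rigorous — showing that the only centralizer-closed full subgraph of a one-ended $\Gamma$ containing a join is $\Gamma$ itself, so that the growth cannot halt along a separating configuration — is, I expect, one of the two main obstacles.

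Granting that $H$ is join-free, strong quasiconvexity should follow from the analysis of join-free subgroups. Join-freeness means $H$ contains no quasi-flat, so by the centralizer lemma every infinite-order element of $H$ is a rank-one (Morse) element of $G_\Gamma$: were some $h$ not rank-one, its centralizer would contain a $\Z^2$, i.e. a join, inside $H$. A finitely generated subgroup of a right-angled Coxeter group all of whose infinite-order elements are uniformly Morse and which avoids joins is then strongly quasiconvex, in the spirit of the parabolic characterizations of \cite{Genevois2017} and \cite{JDT2018}, with the subgroup-divergence estimates for join-free subgroups feeding directly into the contracting property.

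Finally, for proper $H$ I would derive virtual freeness; here the crucial and most delicate point is to show that such an $H$ cannot be one-ended. Once that is known, Stallings' theorem and accessibility express $H$ as a finite graph of groups over finite edge groups whose vertex groups are again malnormal, join-free and not one-ended, so the decomposition terminates in finite vertex groups and $H$ is virtually free; being virtually free it is hyperbolic, and a hyperbolic strongly quasiconvex subgroup is stable by \cite{Tran2017}, yielding the last clause. To rule out a one-ended $H$ I would combine malnormality with strong quasiconvexity: distinct cosets of a malnormal strongly quasiconvex subgroup have disjoint limit sets, since a common limit point yields a shared power of infinite-order elements and hence a nontrivial element of $H\cap gHg^{-1}$, and I would argue that a one-ended, hence large, limit set cannot coexist with these disjointness constraints inside the boundary of $G_\Gamma$. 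Pinning down this incompatibility, where the full strength of malnormality beyond join-freeness enters, is, together with the centralizer-propagation step, the principal obstacle of the proof.
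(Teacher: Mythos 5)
Your skeleton --- malnormality forces join-freeness, and join-freeness yields stability and virtual freeness --- is exactly the paper's, and your centralizer observation is its central tool. But the execution has genuine gaps, and the two points you flag as ``principal obstacles'' are resolved in the paper by arguments different from (and simpler than) the ones you sketch. In the join-free step, the negation of join-freeness gives you only a single infinite-order $h\in H$ lying in some $gG_{\Lambda_1*\Lambda_2}g^{-1}$; it does not give that $H$ meets each factor in an infinite subgroup, that both $\Lambda_i$ are non-complete, or that $H\cap G_{\Lambda_1}$ contains an element of full support, so your starting configuration is unjustified (and quasi-flats are beside the point, since join-freeness is a condition on elements). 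The paper argues in two cases: if both $\Lambda_i$ have diameter at least $2$, then $H\cap gG_\Lambda g^{-1}$ is a malnormal subgroup of a product of two infinite groups and contains an infinite-order element, so Lemma~\ref{simplecase} (a finite-height pigeonhole argument in direct products) forces $H\cap gG_\Lambda g^{-1}=gG_\Lambda g^{-1}$; if instead one $\Lambda_i$ is a clique, a vertex $s$ of it gives $gsg^{-1}$ commuting with $h$, hence $gsg^{-1}\in H$ by malnormality. Both cases contradict the paper's first step: no conjugate of a generator lies in $H$. That first step is also where your propagation worry dissolves, and its resolution needs information your engine discards: malnormality applies to any \emph{nontrivial} intersection, not only infinite ones. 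Vertices have order two and adjacent vertices commute, so if $s_0\in K=g_0^{-1}Hg_0$ then every neighbor $s$ satisfies $s_0\in sKs\cap K$, forcing $s\in K$; connectivity alone (no separating-clique hypothesis, no one-ended reduction) then gives $K=G_\Gamma$, contradicting properness. Your closure, which uses only centralizers of infinite-order elements, can genuinely stall: if the captured vertices span $\Lambda$ and no outside vertex commutes with two non-adjacent vertices of $\Lambda$ (the condition of Proposition~\ref{almost}), then no outside vertex commutes with any infinite-order element of $G_\Lambda$ (such a vertex would have to commute with the whole support of a cyclically reduced infinite-order word, and that support contains a non-adjacent pair). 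In fact no argument seeing only infinite-order elements can prove this theorem, because it would apply verbatim to almost malnormal subgroups, and the paper exhibits an almost malnormal (hyperbolically embedded) parabolic subgroup that is virtually $\Z^2$, hence neither join-free nor virtually free.

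Your mechanisms for the two conclusions also do not work as stated. ``All infinite-order elements of $H$ are Morse, hence $H$ is strongly quasiconvex'' is not a valid implication --- that is precisely the hard content, not a formal consequence of the parabolic characterizations; the paper obtains stability of join-free subgroups through the Koberda--Mangahas--Taylor machinery transplanted to Davis complexes: join-freeness implies $N$-join-busting (Proposition~\ref{prop2}), and join-busting implies stability (Proposition~\ref{prop3}). For virtual freeness, your plan (rule out one-endedness of $H$ via limit sets, then Stallings and accessibility with ``again malnormal'' vertex groups) is left unresolved by you, and it is not needed: Proposition~\ref{vfree} proves that every star-free --- in particular every join-free --- subgroup is virtually free by induction on the vertices of $\Gamma$, using the visual splitting $G_\Gamma=G_{\St(v)}*_{G_{\Lk(v)}}G_{\Gamma_v}$: a torsion-free star-free subgroup acts on the Bass--Serre tree with trivial edge stabilizers, hence is free, and the general case follows by passing to a torsion-free finite-index subgroup. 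Finally, your opening reduction to one-ended ambient groups is unnecessary: the paper's entire argument runs for every connected non-join $\Gamma$, and Remark~\ref{rmotivation} shows that a join defining graph admits no infinite proper malnormal subgroup at all.
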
 

Since right-angled Coxeter groups is a large class of virtually special groups (see \cite{MR2646113}), Theorem~\ref{motivation1} sheds light on the positive answer to Question~\ref{q2} and the ability to extend our result to the non-hyperbolic case via the concept of strongly quasiconvex subgroups (see Question~\ref{q3}). In Theorem~\ref{motivation1}, if $H$ is finite or $H=G_\Gamma$, then $H$ is strongly quasiconvex clearly. Otherwise, $H$ is an infinite proper subgroup and this implies that $\Gamma$ is not a join by Remark~\ref{rmotivation}. Therefore, the key idea for the proof of Theorem~\ref{motivation1} is the following theorem.

\begin{thm}
\label{keystep1}
Let $\Gamma$ be a non-join connected graph and $H$ an infinite proper malnormal subgroup of the right-angled Coxeter group $G_\Gamma$. Then $H$ is a join-free subgroup (i.e. $H$ is infinite and none of infinite order elements in $H$ are conjugate into a join subgroup).
\end{thm}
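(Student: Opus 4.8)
The plan is to prove the contrapositive in the following strong form: if $H$ is a \emph{proper} malnormal subgroup of $G_\Gamma$ (with $\Gamma$ connected) and some infinite order element of $H$ is conjugate into a join subgroup, then $H = G_\Gamma$, a contradiction. Since any conjugate $kHk^{-1}$ of a malnormal subgroup is again malnormal and is proper exactly when $H$ is, I may replace $H$ by a conjugate and assume from the outset that the relevant element $h$ lies in a join subgroup $G_J$, where $J = J_1 * J_2$ with $J_1, J_2$ nonempty. The one fact I will use repeatedly is that malnormality pushes centralizers inside $H$: if $h \in H$ is nontrivial and $x$ commutes with $h$, then $h \in xHx^{-1} \cap H \neq \{1\}$, forcing $x \in H$; hence $C_{G_\Gamma}(h) \le H$ for every nontrivial $h \in H$.

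First I would extract a standard generator of $G_\Gamma$ that lies in $H$. Using $G_J = G_{J_1} \times G_{J_2}$, write $h = h_1 h_2$ with $h_i \in G_{J_i}$; as $J_1 \cap J_2 = \emptyset$ the factors commute and generate subgroups intersecting trivially, so since $h \neq 1$ at least one factor, say $h_1 \in G_{J_1}$, is nontrivial (indeed, as $h$ has infinite order we may take $h_1$ of infinite order). Because $h_1$ commutes with $h$, the centralizer principle gives $h_1 \in C_{G_\Gamma}(h) \le H$, so $H$ contains the nontrivial element $h_1$. Now the join relation $J = J_1 * J_2$ says every vertex of $J_2$ is adjacent to every vertex of $J_1$, so the whole subgroup $G_{J_2}$ centralizes $h_1$; applying the principle to $h_1$ yields $G_{J_2} \le C_{G_\Gamma}(h_1) \le H$. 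In particular each vertex $t \in J_2$ is a generator lying in $H$.

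Finally I would spread this foothold over all of $\Gamma$ by connectivity. For a generator $t \in H$ and any $v \in \Lk(t)$ the generators commute, so $v \in C_{G_\Gamma}(t) \le H$; thus the set $U = \{\, v \in V(\Gamma) : v \in H \,\}$ is nonempty (it contains $J_2$) and closed under adjacency. Connectedness of $\Gamma$ then forces $U = V(\Gamma)$, and since the vertices generate we get $H = G_\Gamma$, contradicting properness. This establishes that no infinite order element of the proper subgroup $H$ is conjugate into a join subgroup, which together with $H$ infinite is precisely join-freeness.

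The step I expect to require the most care is the passage from ``an element conjugate into a join'' to ``a standard generator inside $H$'': the star-by-star propagation only ignites at a vertex of $\Gamma$, so the two-fold use of the centralizer principle (first capturing the factor $h_1$, then the commuting special subgroup $G_{J_2}$) is what bridges this gap, and one must check it survives degenerate joins such as $\abs{J_2}=1$, where $G_{J_2}$ is finite but still contains a generator. Note that connectivity and properness carry the argument, while the non-join hypothesis on $\Gamma$ serves only to make the conclusion meaningful: were $\Gamma$ itself a join, $G_\Gamma$ would be a join subgroup and the same computation would show that $G_\Gamma$ has no infinite proper malnormal subgroups at all.
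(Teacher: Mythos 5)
Your proof is correct, and at the decisive step it takes a genuinely different (and more economical) route than the paper's. Both arguments share two ingredients: the observation that malnormality forces the centralizer of any nontrivial element of $H$ into $H$, and the propagation-by-connectivity step showing that once $H$ contains a vertex generator (or a conjugate of one), it contains them all and hence equals $G_\Gamma$. Where you differ is in how a generator is produced inside $H$. The paper first proves, as a standalone step, that no conjugate $gsg^{-1}$ of a vertex can lie in $H$, and then splits into cases according to the join $\Lambda=\Lambda_1*\Lambda_2$ containing $h$: when one factor is a clique it uses exactly your trick (a vertex of that factor is central in $G_\Lambda$, hence commutes with $h$, hence lies in $H$ by malnormality), but when both factors have diameter at least $2$ it instead invokes Lemma~\ref{simplecase} from the appendix --- a coset-counting result about finite height subgroups of direct products --- to conclude that $H\cap gG_\Lambda g^{-1}$ is all of $gG_\Lambda g^{-1}$. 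Your decomposition $h=h_1h_2$ with the two-fold centralizer argument (first $h_1\in C(h)\le H$, then $G_{J_2}\le C(h_1)\le H$) handles both cases uniformly, uses only malnormality rather than the finite-height machinery, and makes the appendix lemma unnecessary for this theorem; it even proves slightly more, namely that no nontrivial (not merely infinite-order) element of a proper malnormal subgroup is conjugate into a join subgroup, which in particular recovers torsion-freeness of such subgroups. What the paper's route buys in exchange is reuse: Lemma~\ref{simplecase} is needed anyway for Remark~\ref{rmotivation} and for the finite-height results in the appendix, so deriving the hard case from it costs the paper nothing extra, whereas your argument is self-contained but specific to malnormality.
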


Theorem~\ref{keystep1} is the main motivation for studying join-free subgroups of right-angled Coxeter groups with connected non-join defining graphs. We note that all finitely generated join-free subgroups of right-angled Coxeter groups with connected non-join defining graphs are stable and virtually free by Propositions \ref{vfree}, \ref{prop2} and \ref{prop3}. %By imposing the condition torsion free on the subgroup we can extend Theorem~\ref{motivation1} to the case of almost malnormality. A subgroup $H$ of a group $G$ is \emph{almost malnormal} if $gHg^{-1}\cap H$ is finite for each $g$ not in $H$. 
%\begin{thm}
%\label{motivation2}
%Let $\Gamma$ be a connected graph and $H$ a finitely generated, torsion free, and almost malnormal subgroup of the right-angled Coxeter group $G_\Gamma$. Then $H$ is strongly quasiconvex and free (therefore, $H$ is also stable).
%\end{thm} 
%In Theorem~\ref{motivation2}, it is clear that $H$ must be a proper subgroups since $H$ is torsion free. Moreover, if $H$ is a trivial subgroup, then it is strongly quasiconvex and free trivially. Otherwise, $H$ is non-trivial and this implies that $\Gamma$ is not a join by Remark~\ref{rmotivation}. Therefore, the key idea for the proof of Theorem~\ref{motivation2} is the following theorem.
%\begin{thm}
%\label{keystep2}
%Let $\Gamma$ be a non-join connected graph and $H$ a non-trivial torsion free malnormal subgroup of the right-angled Coxeter group $G_\Gamma$. Then $H$ is a join-free subgroup (i.e. $H$ is infinite and none of infinite order elements in $H$ are conjugate into a join subgroup).
%\end{thm}
We also refer the reader to Section~\ref{stt1} for the proof of Theorem \ref{keystep1}. In general, we still do not know whether an arbitrary almost malnormal (or even finite height) subgroup of a one-ended right-angled Coxeter group is strongly quasiconvex although the positive answer is already confirmed for parabolic subgroups as we discussed at the beginning. Note that a subgroup $H$ of a group $G$ is \emph{almost malnormal} if $gHg^{-1}\cap H$ is finite for each $g$ not in $H$. It is clear that an infinite almost malnormal subgroup has height exactly $1$.

We end this section with a characterization of almost malnormal parabolic subgroups in right-angled Coxeter groups which does not seem to be recorded in the literature. 

\begin{prop}
\label{almost}
Let $\Gamma$ be a finite simplicial graph and $\Lambda$ be an induced subgraph of $\Gamma$. Then a parabolic subgroup $H$ of right-angled Coxeter group $G_\Gamma$ induced by $\Lambda$ is almost malnormal if and only if no vertex of $\Gamma-\Lambda$ commutes to two non-adjacent vertices of $\Lambda$.
\end{prop}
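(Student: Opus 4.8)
The plan is to prove both implications, handling the reverse direction directly and the forward direction by contraposition, with everything reduced to the analysis of a single minimal double-coset representative.

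First I would dispose of the easy direction. Suppose some vertex $v \in \Gamma - \Lambda$ commutes with two non-adjacent vertices $a,b$ of $\Lambda$. Since $a$ and $b$ are non-adjacent, $\gen{a,b}$ is infinite dihedral, hence an infinite subgroup of $H = G_\Lambda$. As $v$ commutes with both $a$ and $b$, it centralizes $\gen{a,b}$, so $v\gen{a,b}v^{-1} = \gen{a,b}\subseteq H$; thus $\gen{a,b}\subseteq vHv^{-1}\cap H$. Because $v\notin\Lambda$ we have $v\notin G_\Lambda = H$, so $H$ fails to be almost malnormal.

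For the converse I would argue by contraposition: assuming $H$ is not almost malnormal, I produce a vertex of $\Gamma-\Lambda$ commuting with two non-adjacent vertices of $\Lambda$. Pick $g\notin H$ with $gHg^{-1}\cap H$ infinite and let $d$ be the minimal-length element of the double coset $G_\Lambda\,g\,G_\Lambda$; writing $g = p\,d\,q$ with $p,q\in G_\Lambda$ gives $gG_\Lambda g^{-1}\cap G_\Lambda = p\,(dG_\Lambda d^{-1}\cap G_\Lambda)\,p^{-1}$, so $dG_\Lambda d^{-1}\cap G_\Lambda$ is again infinite, and $d\neq 1$ since $g\notin G_\Lambda$. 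Here I invoke the standard description of the intersection of a parabolic subgroup with its conjugate in a Coxeter group: for $d$ minimal in its double coset, $dG_\Lambda d^{-1}\cap G_\Lambda = G_{\Lambda_0}$ with $\Lambda_0 = \set{s\in\Lambda}{dsd^{-1}\in G_\Lambda}$, where minimality forces $dsd^{-1}$ to be a single generator for each such $s$. Two right-angled Coxeter facts then pin this down: distinct generators are never conjugate, since they have distinct images in the abelianization $(\Z/2)^S$, so $dsd^{-1}=s$; hence $\Lambda_0 = \set{s\in\Lambda}{ds=sd}$ is exactly the set of vertices of $\Lambda$ commuting with $d$.

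The core step is then to locate the desired external vertex. Since $dG_\Lambda d^{-1}\cap G_\Lambda = G_{\Lambda_0}$ is infinite, $\Lambda_0$ is not a clique, so it contains two non-adjacent vertices $a,b$, both commuting with $d$. Using the other standard fact that the centralizer of a generator is its closed-star parabolic, $C_{G_\Gamma}(a)=G_{\St(a)}$, commuting with $a$ yields $\supp(d)\subseteq\St(a)$ and likewise $\supp(d)\subseteq\St(b)$; as $a,b$ are non-adjacent we have $\St(a)\cap\St(b)=\Lk(a)\cap\Lk(b)$, so every vertex appearing in $d$ is a common neighbor of $a$ and $b$. Finally, because $d\notin G_\Lambda$ its support is not contained in $\Lambda$, so some $v\in\supp(d)$ lies in $\Gamma-\Lambda$; this $v$ is a common neighbor of the non-adjacent vertices $a,b\in\Lambda$, exactly the forbidden configuration. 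I expect the main obstacle to be the careful justification of the parabolic intersection formula for the minimal double-coset representative, together with the passage from ``$d$ commutes with $s$'' to the support containment $\supp(d)\subseteq\St(s)$; once the centralizer-equals-star identity and the well-definedness of $\supp$ are in hand, the remainder is bookkeeping.
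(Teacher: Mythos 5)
Your proof is correct, but the hard direction takes a genuinely different route from the paper's. For the reverse implication the paper also passes to a minimal element, but it minimizes $\abs{g}_S$ over all bad conjugators $g$ and then runs a purely elementary argument: it writes $ghg^{-1}$ as the concatenation $w_0w_1\bar{w}_0$, applies the Deletion Condition, and rules out case by case where the cancelling pair $v_i=v_j$ can sit (same block, $w_0$ with $w_1$, $w_1$ with $\bar{w}_0$), until the only surviving case produces a letter of $w_0$ outside $\Lambda$ commuting across all of $w_1$, hence with two non-adjacent vertices of $\Lambda$. You instead pass to the minimal double-coset representative $d$ of $G_\Lambda g G_\Lambda$ and invoke Kilmoyer's parabolic-intersection theorem, the non-conjugacy of distinct generators (via the abelianization $(\Z/2)^S$), and the identity $C_{G_\Gamma}(a)=G_{\St(a)}$, which together pin down $dG_\Lambda d^{-1}\cap G_\Lambda$ as the standard parabolic on $\set{s\in\Lambda}{ds=sd}$ and force $\supp(d)\subseteq\Lk(a)\cap\Lk(b)$. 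Your approach buys structural clarity --- the intersection is itself a standard parabolic and the offending vertex is located inside $\supp(d)$ rather than extracted from a cancellation pattern --- at the price of importing standard but nontrivial Coxeter machinery, whereas the paper's proof is self-contained, using only the Deletion Condition stated in its preliminaries. One small imprecision worth fixing: as stated, your set $\Lambda_0=\set{s\in\Lambda}{dsd^{-1}\in G_\Lambda}$ has the conjugation running in the opposite direction from the intersection $dG_\Lambda d^{-1}\cap G_\Lambda$ (Kilmoyer gives $K=\set{s\in\Lambda}{d^{-1}sd\in\Lambda}$ for that intersection); this is harmless here because your abelianization step identifies both sets with $\set{s\in\Lambda}{ds=sd}$, but the statement should be phrased consistently.
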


Using Proposition 4.9 in \cite{Genevois2017} or Theorem 7.5 in \cite{JDT2018} we can easily see that all almost malnormal parabolic subgroups of right-angled Coxeter groups are strongly quasiconvex but as discussed above strong quasiconvexity even also holds for all finite height parabolic subgroups of right-angled Coxeter groups. However, the above proposition will later help us characterize hyperbolically embedded parabolic subgroups of right-angled Coxeter groups.

\subsection{Hyperbolically embedded subgroups in right-angled Coxeter groups} 

Hyperbolically embedded subgroups are generalizations of peripheral subgroups in relatively hyperbolic groups (see \cite{DGO}) and are a key component of studying acylindrically hyperbolic groups, a large class of groups exhibiting hyperbolic-like behavior (see \cite{MR3430352}). Work of Dahmani-Guirardel-Osin \cite{DGO} and Sisto \cite{MR3519976} showed that if a finite collection of subgroups $\{H_i\}$ is hyperbolically embedded in a finitely generated group $G$, then $\{H_i\}$ is an almost malnormal collection and each $H_i$ is strongly quasiconvex. The converse of this statement is true for groups acting geometrically on $\CAT(0)$ cube complexes (see Theorem 6.31 in \cite{Genevois2017}) and hierarchically hyperbolic groups (see Theorem H in \cite{JDT2018}) which both includes right-angled Coxeter groups. We note that a collection $\mathcal{H}$ of subgroups of $G$ is \emph{malnormal} (resp. \emph{almost malnormal}) if for each $H, H'\in\mathcal{H}$ and $g\in G$ we have $H\cap gH'g^{-1}\neq\{e\}$ (resp. $\abs{H\cap gH'g^{-1}}=\infty\}$) implies $H=H'$ and $g\in H$. Therefore, the following is a corollary of Theorem~\ref{motivation1}. 

\begin{cor}
\label{cmotivation1}
Let $\Gamma$ be a connected graph and $\mathcal{H}$ a finite collection of finitely generated subgroups of the right-angled Coxeter group $G_\Gamma$. If $\mathcal{H}$ is malnormal, then $\mathcal{H}$ is hyperbolically embedded. In addition, the converse is also true if all subgroups in $\mathcal{H}$ are torsion free.
\end{cor}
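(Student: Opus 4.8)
The plan is to deduce the corollary from Theorem~\ref{motivation1} together with the characterization of hyperbolically embedded collections recalled in the introduction: a finite collection of finitely generated subgroups of $G_\Gamma$ is hyperbolically embedded if and only if it is an almost malnormal collection whose members are each strongly quasiconvex. Here the forward implication of this characterization is due to Dahmani--Guirardel--Osin and Sisto, and the converse holds for right-angled Coxeter groups by Theorem 6.31 in \cite{Genevois2017} or Theorem H in \cite{JDT2018}.

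For the forward implication, suppose $\mathcal{H}$ is malnormal. First I would observe that a malnormal collection is in particular an almost malnormal collection, since the defining condition for malnormality (nontrivial intersection forces $H=H'$ and $g\in H$) is stronger than the one for almost malnormality (infinite intersection forces the same conclusion). Next, specializing the malnormality condition to $H=H'=H_i$ shows that each individual $H_i$ is a malnormal subgroup of $G_\Gamma$. Since $\Gamma$ is connected and each $H_i$ is finitely generated, Theorem~\ref{motivation1} gives that each $H_i$ is strongly quasiconvex. Combining almost malnormality of the collection with strong quasiconvexity of its members, the converse characterization for right-angled Coxeter groups yields that $\mathcal{H}$ is hyperbolically embedded.

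For the converse implication under the torsion-free hypothesis, suppose $\mathcal{H}$ is hyperbolically embedded. By the result of Dahmani--Guirardel--Osin and Sisto, $\mathcal{H}$ is an almost malnormal collection. To upgrade this to malnormality, take $H, H' \in \mathcal{H}$ and $g \in G_\Gamma$ with $H \cap gH'g^{-1} \neq \{e\}$. The intersection $H \cap gH'g^{-1}$ is a subgroup of the torsion-free group $H$, hence is itself torsion-free; being nontrivial, it must therefore be infinite. Almost malnormality of $\mathcal{H}$ then forces $H = H'$ and $g \in H$, which is exactly malnormality of the collection.

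Once Theorem~\ref{motivation1} and the cited characterization are in hand, the argument is essentially a bookkeeping exercise, so there is no serious obstacle to overcome. The only genuine input beyond these ingredients is the elementary observation in the converse that a nontrivial subgroup of a torsion-free group is infinite; this is precisely where the torsion-free hypothesis enters and, as the argument makes clear, it cannot be dropped in general, since nontrivial finite intersections can occur once torsion is present.
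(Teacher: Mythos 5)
Your proof is correct and follows essentially the same route the paper intends: the paper derives this corollary from Theorem~\ref{motivation1} combined with the Dahmani--Guirardel--Osin/Sisto necessary conditions and the converse characterization for cubical/hierarchically hyperbolic groups, exactly as you do, including the observation that torsion-freeness upgrades almost malnormality to malnormality. No gaps to report.
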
 

%By imposing torsion free on the subgroups, we obtain a simple characterization of hyperbolically embedded collections of torsion free subgroups as follows.

%\begin{cor}
%\label{cmotivation2}
%Let $\Gamma$ be a connected graph and $\mathcal{H}$ a finite collection of torsion free subgroups of the right-angled Coxeter group $G_\Gamma$. Then $\mathcal{H}$ is hyperbolically embedded in $G_\Gamma$ if and only if $\mathcal{H}$ is a malnormal collection of finitely subgroups of $G_\Gamma$.
%\end{cor} 

We note that all proper hyperbolically embedded subgroups in the above corollary are virtually free (and therefore hyperbolic). Using the work of Caprace~\cite{MR2665193,MR3450952} we can construct a non-trivial relatively hyperbolic right-angled Coxeter group with non-hyperbolic peripheral subgroups. Therefore, the peripheral subgroups are clearly non-hyperbolic hyperbolically embedded subgroups. Outside the relatively hyperbolic setting, one may expect that all proper hyperbolically embedded subgroups of right-angled Coxeter groups are hyperbolic. However, this is not true. Combining the characterization of parabolic strongly quasiconvex subgroups (see Proposition 4.9 in \cite{Genevois2017} or Theorem 7.5 in \cite{JDT2018}) and the characterization of almost malnormal collections of parabolic subgroups (see Corollary~\ref{cmotivation4}) in right-angled Coxeter groups we obtain a characterization of hyperbolically embedded collections of parabolic subgroups in this group collection. 

\begin{prop}
\label{cmotivation3}
Let $\Gamma$ be a simplicial finite graph and $$\mathcal{H}=\{g_1G_{\Lambda_1}g_1^{-1}, g_2G_{\Lambda_2}g_2^{-1}, \cdots, g_nG_{\Lambda_n}g_n^{-1}\}$$ a collection of parabolic subgroups of the right-angled Coxeter group $G_\Gamma$. Then $\mathcal{H}$ is hyperbolically embedded (i.e. $\mathcal{H}$ is almost malnormal) in $G_\Gamma$ if and only if the following holds:
\begin{enumerate}
\item For each $\Lambda_i$ no vertex outside $\Lambda_i$ commutes to non-adjacent vertices of $\Lambda_i$; and
\item $\Lambda_i\cap \Lambda_j$ is empty or a clique for each $i\neq j$. 
\end{enumerate}
\end{prop}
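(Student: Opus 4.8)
The plan is to separate the statement into two independent pieces: the equivalence between hyperbolic embeddedness and almost malnormality for parabolic collections (the parenthetical ``i.e.''), and the purely combinatorial characterization of almost malnormality by conditions (1) and (2). For the first piece, if $\mathcal{H}$ is hyperbolically embedded then it is automatically almost malnormal by the work of Dahmani--Guirardel--Osin \cite{DGO} and Sisto \cite{MR3519976}. Conversely, if $\mathcal{H}$ is almost malnormal, then each member is an almost malnormal parabolic subgroup, hence strongly quasiconvex by the characterization of strongly quasiconvex parabolic subgroups (Proposition 4.9 of \cite{Genevois2017} or Theorem 7.5 of \cite{JDT2018}); since $G_\Gamma$ acts geometrically on a $\CAT(0)$ cube complex and is hierarchically hyperbolic, an almost malnormal collection of strongly quasiconvex subgroups is hyperbolically embedded (Theorem 6.31 of \cite{Genevois2017} or Theorem H of \cite{JDT2018}). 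Thus it suffices to prove that $\mathcal{H}$ is almost malnormal if and only if (1) and (2) hold, which is the content of Corollary~\ref{cmotivation4}; I indicate the argument directly.

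For necessity I argue by contrapositive, producing in each failure an element realizing an infinite conjugate-intersection. If (1) fails for some $\Lambda_i$, there is a vertex $v\in\Gamma-\Lambda_i$ adjacent to two non-adjacent vertices $s,t\in\Lambda_i$. Since $v$ is an involution commuting with both $s$ and $t$, it centralizes the infinite dihedral subgroup $\gen{s,t}\le G_{\Lambda_i}$, so $G_{\Lambda_i}\cap vG_{\Lambda_i}v^{-1}$ is infinite while $v\notin G_{\Lambda_i}$; conjugating by $g_i$ shows the member $H_i$ of $\mathcal{H}$ is not almost malnormal, so neither is $\mathcal{H}$. If (2) fails, there are indices $i\neq j$ with $\Lambda_i\cap\Lambda_j$ non-empty and not a clique, hence containing two non-adjacent vertices $s,t$; then $\gen{s,t}\le G_{\Lambda_i}\cap G_{\Lambda_j}$ is infinite, and taking $g=g_ig_j^{-1}$ gives $\abs{H_i\cap gH_jg^{-1}}=\infty$ for the distinct members $H_i\neq H_j$, again contradicting almost malnormality.

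For sufficiency, suppose $\abs{H_i\cap gH_jg^{-1}}=\infty$. Conjugating by $g_i^{-1}$ reduces this to $\abs{G_{\Lambda_i}\cap hG_{\Lambda_j}h^{-1}}=\infty$ with $h=g_i^{-1}gg_j$. The key input is the structural fact that the intersection of a standard parabolic subgroup with a conjugate of a standard parabolic subgroup of a right-angled Coxeter group is again a parabolic subgroup; thus $G_{\Lambda_i}\cap hG_{\Lambda_j}h^{-1}=uG_Ku^{-1}$ for some $u\in G_{\Lambda_i}$ and $K\subseteq\Lambda_i$, and this subgroup is simultaneously conjugate into $G_{\Lambda_j}$. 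Since the intersection is infinite, $G_K$ is infinite, so $K$ is not a clique and contains two non-adjacent vertices. When $i=j$, condition (1) together with Proposition~\ref{almost} says $G_{\Lambda_i}$, hence $H_i$, is almost malnormal, forcing $h\in G_{\Lambda_i}$, i.e. $g\in H_i$. When $i\neq j$, the plan is to track the non-clique $K\subseteq\Lambda_i$ through its conjugacy into $G_{\Lambda_j}$: condition (1) constrains the available conjugations, so that a non-clique in $\Lambda_i$ which is also carried into $\Lambda_j$ must already be shared, producing two non-adjacent vertices in $\Lambda_i\cap\Lambda_j$ and contradicting (2). In either case the forbidden configuration is excluded, so $\mathcal{H}$ is almost malnormal.

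The main obstacle is the last step: extracting from the infinite intersection $G_{\Lambda_i}\cap hG_{\Lambda_j}h^{-1}$ an \emph{honestly shared} non-clique, rather than merely a non-clique that becomes conjugate into $\Lambda_j$. This will require a careful analysis of conjugacy of parabolic subgroups in right-angled Coxeter groups---identifying which reflections in the infinite dihedral subgroup lie in both $G_{\Lambda_i}$ and $hG_{\Lambda_j}h^{-1}$ and tracing their vertex supports---and it is precisely here that condition (1) must be invoked to rule out the degenerate ways in which a non-clique of $\Lambda_i$ could be conjugated off its support. The combinatorial bookkeeping for this step, rather than any conceptual difficulty, is the technical heart of the proof.
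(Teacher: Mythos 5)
Your overall reduction is sound and matches the paper's route: the equivalence of hyperbolic embeddedness with (almost malnormality plus strong quasiconvexity of each member) via \cite{DGO}, \cite{MR3519976} and Theorem 6.31 of \cite{Genevois2017} / Theorem H of \cite{JDT2018}, together with the observation that condition (1) makes each $G_{\Lambda_i}$ strongly quasiconvex, is exactly how the paper argues, and your necessity direction and the $i=j$ case of sufficiency (via Proposition~\ref{almost}) are correct. But the $i\neq j$ case of sufficiency is a genuine gap: you describe a ``plan'' to track the non-clique $K\subseteq\Lambda_i$ through its conjugation into $G_{\Lambda_j}$ and explicitly defer the ``combinatorial bookkeeping'', so the proof is not complete. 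This is precisely the step the paper does not reprove either --- it obtains Corollary~\ref{cmotivation4} by citing Proposition 3.4 of \cite{MR3365774}, the structure theorem for intersections of parabolic subgroups in graph products.

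The missing step has a clean resolution, and it shows your plan mis-assigns the role of condition (1). Write $h=uwv$ with $u\in G_{\Lambda_i}$, $v\in G_{\Lambda_j}$ and $w$ the minimal-length representative of the double coset $G_{\Lambda_i}hG_{\Lambda_j}$; since $u\in G_{\Lambda_i}$, the intersection $G_{\Lambda_i}\cap hG_{\Lambda_j}h^{-1}$ is infinite if and only if $G_{\Lambda_i}\cap wG_{\Lambda_j}w^{-1}$ is. The Kilmoyer-type statement (this is the content of the result the paper cites from \cite{MR3365774}) gives $G_{\Lambda_i}\cap wG_{\Lambda_j}w^{-1}=G_C$, where $C$ is the set of vertices $s\in\Lambda_i$ such that $w^{-1}sw$ is a vertex of $\Lambda_j$. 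Now in a right-angled Coxeter group two \emph{distinct} vertex generators are never conjugate (a Coxeter generator $s$ is conjugate to $t$ only through a chain of odd edge labels, and all labels here are $2$ or $\infty$), so $w^{-1}sw\in\Lambda_j$ forces $w^{-1}sw=s$; hence $C\subseteq\Lambda_i\cap\Lambda_j$. An infinite intersection therefore produces two non-adjacent vertices lying in $\Lambda_i\cap\Lambda_j$ itself, contradicting (2) directly --- condition (1) plays no role when $i\neq j$; it is needed only in the $i=j$ case, where, as you say, Proposition~\ref{almost} forces $g\in H_i$. Without this input --- either the citation the paper uses or the argument just sketched --- your proposal does not yet prove the proposition.
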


We now use the above proposition to construct an example of proper non-hyperbolic hyperbolically embedded subgroup of a non-relatively hyperbolic right-angled Coxeter group. Let $\Gamma$ be the graph $\Gamma_3$ in Figure 7 of \cite{JDT2018} and let $\Lambda$ be the red $4$--cycle as in the figure. Then it is clear that $G_\Lambda$ is a virtually $\field{Z}^2$ hyperbolically embedded proper subgroup of the $\mathcal{CFS}$ right-angled Coxeter group $G_\Gamma$. We note that the $\mathcal{CFS}$ condition on defining graphs was used in Dani-Thomas \cite{MR3314816} and Levcovitz \cite{IL} to characterize right-angled Coxeter groups with quadratic divergence. Since the divergence of a one-ended relatively hyperbolic groups is always exponential (see \cite{Sisto}), $\mathcal{CFS}$ right-angled Coxeter groups are never relatively hyperbolic. Actually, we can use the proof of Corollary G in \cite{JDT2018} to prove that every right-angled Coxeter group is a hyperbolically embedded subgroup of some $\mathcal{CFS}$ right-angled Coxeter group. 

\subsection{Geometric embedding properties of join-free subgroups and their generalization}

We note that join-free subgroups were also defined analogously for right-angled Artin groups in Koberda-Mangahas-Taylor~\cite{KMT} under the name \emph{purely loxodromic subgroups}. They also proved that the such groups are strongly quasiconvex and free. The reader can see later that we mostly follow their strategy for the proof of the strong quasiconvexity and the virtual freeness of our groups (see Section~\ref{copy1} and Section~\ref{copy2}). However, we show the embedding properties of our subgroups in right-angled Coxeter groups are more diverse than the ones of the analogous subgroups in right-angled Artin groups.

\begin{thm}
\label{nea}
For each $d\geq 2$ there is a right-angled Coxeter group $G_d$ such that for each $2\leq m\leq d$ the group $G_d$ contains a join-free subgroup $H_d^m$ which is isomorphic to the group $F=\presentation{a,b,c}{a^2=b^2=c^2=e}$ and whose subgroup divergence in $G_d$ is a polynomial of degree $m$.
\end{thm}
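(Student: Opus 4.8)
The plan is to reduce the statement to a single-gadget problem and then assemble. For each individual $m$ with $2 \le m \le d$ I would first build a connected non-join graph $\Theta_m$ together with three pairwise non-adjacent vertices (or reflections) $a,b,c$ supported on $\Theta_m$, so that $G_{\Theta_m}$ carries a ``tower of flats'' of nesting depth $m$ and $\langle a,b,c\rangle$ is join-free with subgroup divergence polynomial of degree $m$ in $G_{\Theta_m}$. I would then let $\Gamma_d$ be the wedge of $\Theta_2,\dots,\Theta_d$ at a single common vertex $v$, so that $G_d := G_{\Gamma_d}$ is an amalgam $G_{\Theta_2} *_{\Z/2} \cdots *_{\Z/2} G_{\Theta_d}$ over the finite group $\langle v\rangle \cong \Z/2$, with each $H_d^m := \langle a,b,c\rangle \le G_{\Theta_m} \le G_d$. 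Because any avoidance path that leaves the factor $G_{\Theta_m}$ must pass through the finite amalgamating subgroup, one checks that the wedge does not create shorter detours, so the subgroup divergence of $H_d^m$ in $G_d$ agrees with its subgroup divergence in $G_{\Theta_m}$. This isolates the real content into the construction of a single $\Theta_m$.

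Given $\Theta_m$, the first task is to identify the isomorphism type. Since $a,b,c$ are involutions, showing $H_d^m \cong F$ amounts to a normal-form (ping-pong) argument verifying that no nonempty reduced alternating word in $a,b,c$ represents the identity of $G_d$, which follows directly from the pairwise non-adjacency of $a,b,c$ and the combinatorics of $\Theta_m$. The second task, join-freeness, is more delicate: the very flats that will produce polynomial divergence are themselves join subgraphs, so I must choose $a,b,c$ so that each infinite-order element of $H_d^m$ --- that is, each reduced word of syllable length at least two --- is genuinely loxodromic and cannot be conjugated into any of these flats. Once this is done, Propositions~\ref{vfree}, \ref{prop2} and \ref{prop3} apply and give for free that $H_d^m$ is stable, strongly quasiconvex, and virtually free, in agreement with $H_d^m \cong F$; the only remaining point is the exact growth rate of the divergence.

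The divergence estimate splits into two bounds. For the upper bound I would, given $x,y \in H_d^m$ at distance $r$, construct an explicit path from $x$ to $y$ avoiding $\nbd{H_d^m}{r}$ by climbing the depth-$m$ tower of flats in $\Theta_m$: a detour in the first flat costs $O(r)$, but to stay outside the neighborhood it must be pushed into the next flat, and the cost compounds multiplicatively to $O(r^m)$ by the usual corner-cutting estimate. For the lower bound I would take any path outside the neighborhood of $H_d^m$ and, using the hyperplane (wall) structure of the Davis complex on which $G_d$ acts, show that such a path is forced to cross a nested family of walls, each crossing contributing length proportional to a lower level of the tower, so that its total length is $\Omega(r^m)$; strong quasiconvexity of $H_d^m$ is what guarantees that the avoidance region is thick enough to trap the path against these walls.

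I expect the lower bound, together with the simultaneous join-freeness requirement, to be the main obstacle. The difficulty is that $\Theta_m$ must thread a needle: the tower of flats has to be rich enough to force detours of length $\sim r^m$, yet the chosen products $ab, bc, ac$ and their powers must remain loxodromic and transverse to every flat so that the subgroup is not trapped. The cleanest route to the lower bound is probably to reduce it to a divergence estimate for the ambient factor $G_{\Theta_m}$, where a degree-$m$ bound can be arranged by design, and then transfer it across the embedding using the Morse (strong quasiconvexity) property of $H_d^m$; making this transfer quantitative, and simultaneously certifying join-freeness against the same flats, is where the real work lies.
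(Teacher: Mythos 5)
There is a genuine gap: your proposal reduces the theorem to the construction of a graph $\Theta_m$ in which $\langle a,b,c\rangle$ is join-free with subgroup divergence of degree exactly $m$ --- but that construction \emph{is} the theorem (the wedge assembly is only packaging), and it is never carried out. The wedge-over-$\langle v\rangle\cong \Z/2$ idea is workable in principle (a Bass--Serre tree-of-spaces argument shows excursions into other factors must return through the same two-element coset, so no shortcuts are created), but the paper does not even need it: it builds a single graph $\Omega_d$, a variant of the Dani--Thomas graphs, which for every $2\le m\le d$ contains three pairwise non-adjacent vertices $c,a_m,b_m$; the subgroup $H_d^m=\langle c,a_m,b_m\rangle$ is then a \emph{special} subgroup, so it is isomorphic to $\Z_2*\Z_2*\Z_2$ with no ping-pong argument, and join-busting (hence join-free) by inspection of its reduced words. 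All of the content lies in the two divergence bounds, which your sketch does not supply.

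Moreover, your announced ``cleanest route to the lower bound'' --- arrange $G_{\Theta_m}$ to have ambient divergence of degree $m$ and transfer it to the subgroup divergence of $H_d^m$ via the Morse property --- cannot work. First, there is a quantifier mismatch: Gersten divergence is a \emph{supremum} over pairs of points on a sphere, while subgroup divergence is an \emph{infimum} over pairs of points on $\partial N_r(H)$, so a lower bound on the former (``some pair is hard to connect'') gives no lower bound on the latter (``every such pair is hard to connect''). Second, the paper itself refutes any such transfer principle: by Lemma \ref{lds2} and Levcovitz's Theorem 7.9 in \cite{IL}, the ambient divergence of $G_{\Omega_d}$ is bounded below by a polynomial of degree $d$, yet the join-free (hence, by Propositions \ref{prop2} and \ref{prop3}, strongly quasiconvex) subgroup $H_d^2$ has subgroup divergence exactly quadratic (Proposition \ref{puq}); likewise every join-free subgroup of a $\mathcal{CFS}$ RACG has quadratic subgroup divergence. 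The degree $m$ must therefore be read off from the position of the generators of $H_d^m$ relative to the wall structure, not from ambient divergence plus Morseness. This is what the paper does: for the lower bound, the pairs $(a_m,b_m)$, $(a_m,c)$, $(b_m,c)$ are rank $m-1$ in the sense of \cite{IL}, so consecutive hyperplanes dual to the edges of a geodesic in $H_d^m$ are degree-$(m-1)$ separated, forcing any path avoiding $N_{\rho r}(H_d^m)$ to make roughly $r$ crossings at cost $f_{m-1}(\rho r)$ each (Proposition \ref{pl}); for the upper bound, detour paths are taken inside the sub-RACG $G_{\Gamma_{m-1}}$, whose divergence is polynomial of degree $m-1$, and are certified to avoid $N_r(H_d^m)$ by the retraction homomorphism $\Phi$ killing the generators of $H_d^m$, then concatenated over $\sim nr$ translates by $b_mc$ (Lemma \ref{nl} and Proposition \ref{pu}). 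Your wall-crossing sketch is in the right spirit, but without this rank/separation machinery (or a substitute) and without an explicit graph, the proposal does not establish the statement.
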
 

Subgroup divergence was introduced by the author with the name \emph{lower relative divergence} in \cite{MR3361149} to study geometric embedding properties of a subgroup inside a finitely generated group. We note that the subgroup divergence of a join-free subgroup in a one-ended right-angled Artin group is always quadratic (see Corollary 1.17 in \cite{Tran2017}). Therefore, the geometric embedding properties of join-free subgroups in right-angled Coxeter groups are more plentiful. However, we also prove the quadratic subgroup divergence holds for certain class of right-angled Coxeter groups. 

\begin{thm}
Let $\Gamma$ be a non-join connected $\mathcal{CFS}$ graph and $H$ a finitely generated join-free subgroup of the right-angled Coxeter group $G_{\Gamma}$. Then the subgroup divergence of $H$ in $G_{\Gamma}$ is exactly quadratic. 
\end{thm}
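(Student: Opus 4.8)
The plan is to establish the two matching bounds separately: a quadratic lower bound on the subgroup divergence that is valid for any strongly quasiconvex subgroup, and a quadratic upper bound, which is where the $\mathcal{CFS}$ hypothesis enters. Throughout I would use only that, by Propositions~\ref{vfree}, \ref{prop2} and \ref{prop3}, the subgroup $H$ is strongly quasiconvex, stable and virtually free, together with its join-freeness; I keep the notion of subgroup divergence (lower relative divergence) as in~\cite{MR3361149}, so that for each scale $r$ one measures lengths of paths joining points of the sphere $S_r(H)$ while avoiding the neighborhood $\nbd{H}{\rho r}$, and ``exactly quadratic'' means this family of functions is equivalent to $r\mapsto r^2$.

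For the lower bound I would argue that strong quasiconvexity alone forces at least quadratic subgroup divergence, independently of the $\mathcal{CFS}$ condition. The point is that a strongly quasiconvex subgroup is Morse and admits a coarsely Lipschitz, contracting closest-point projection $\pi\colon G_\Gamma\to H$, so that any ball disjoint from a large neighborhood of $H$ has uniformly bounded projection (see \cite{Tran2017}). Given a separated pair $x,y\in S_r(H)$ and a path $\alpha$ joining them inside $G_\Gamma-\nbd{H}{\rho r}$, I would subdivide $\alpha$ into subpaths of length comparable to $\rho r$; by the contracting property each such subpath projects to a set of bounded diameter in $H$, so the number of subpaths needed to move the projection from $\pi(x)$ to $\pi(y)$ is at least a linear function of $d(\pi(x),\pi(y))$. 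Since the separated pairs governing the divergence have $d(\pi(x),\pi(y))$ at least comparable to $r$, this gives $\mathrm{len}(\alpha)\succeq r\cdot r=r^{2}$, the standard Morse/contracting lower bound.

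The quadratic upper bound is the main content, and is where I expect the real difficulty. Here I would use that a $\mathcal{CFS}$ defining graph produces, by Dani--Thomas~\cite{MR3314816} and Levcovitz~\cite{IL}, an ambient group $G_\Gamma$ of quadratic divergence, together with the accompanying network of flats: the induced $4$--cycles (squares) of a supporting component cover all of $\Gamma$ and are linked through shared diagonals, each such square spanning a $D_\infty\times D_\infty$ parabolic subgroup that contains a $2$--dimensional flat of internally quadratic divergence. The strategy is to route a detour path around $H$ through this connected system of flats: for each scale $r$ I would exhibit a separated pair $x,y\in S_r(H)$, push $x$ and $y$ onto nearby flats, and then bend from flat to flat exactly as in the quadratic divergence estimate for $G_\Gamma$, paying quadratic total cost and thereby bounding the infimum defining the subgroup divergence from above by $O(r^{2})$.

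The step I expect to be the genuine obstacle is guaranteeing that this detour can be kept outside $\nbd{H}{\rho r}$ while still costing only $O(r^{2})$, and this is precisely where join-freeness of $H$ is indispensable and where the contrast with Theorem~\ref{nea} lives. Because $H$ is join-free and strongly quasiconvex, no infinite order element of $H$ is conjugate into a join subgroup, so $H$ cannot fellow-travel any flat for long; I would make this quantitative by showing that each flat $F$ meets $\nbd{H}{C}$ in a set of uniformly bounded diameter. It then follows that, once a detour enters the flat network beyond a bounded region, it automatically stays a definite proportion of $r$ away from $H$, so the quadratic flat-bending path can be arranged inside $G_\Gamma-\nbd{H}{\rho r}$. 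Making the bound ``$H$ coarsely meets every flat in a bounded set'' uniform over all flats, and reconciling it with the flat-navigation estimate so that the detour is simultaneously quadratic in length and $\rho r$--far from $H$ (for which I would reuse the Morse projection from the lower bound to control how flats approach $H$), is the crux. Combining the two bounds yields that the subgroup divergence of $H$ in $G_\Gamma$ is exactly quadratic, in sharp contrast with the higher polynomial degrees that arise without the $\mathcal{CFS}$ hypothesis, as in Theorem~\ref{nea}.
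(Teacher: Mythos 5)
Your proposal is correct and is essentially the paper's own argument: a quadratic lower bound coming from strong quasiconvexity of $H$ (the paper simply cites Cashen and Theorem~D of \cite{JDT2018}, whose content is the contracting-projection estimate you sketch), and a quadratic upper bound obtained by exhibiting one separated pair on the four-cycle flats supplied by the $\mathcal{CFS}$ condition and routing a detour from flat to flat through the connected component of $\Gamma^4$. Your key control on how flats approach $H$ is exactly the paper's Lemma~\ref{ltran}, deduced from the join-busting property (Proposition~\ref{prop2}); note only that at scale $\rho r$ a flat $F$ meets $\nbd{H}{\rho r}$ in a set of diameter \emph{linear} in $\rho r$ rather than uniformly bounded, which is precisely why the paper's detour (Lemma~\ref{ipt}, concatenated with translates by powers of $h\in H$ to achieve the required separation $nr$) travels to depth $m\approx L(L+r)$ into each flat.
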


As observed above, join-free subgroups are proved to be useful to study the malnormality in right-angled Coxeter groups. However, if one only cares about the coarse geometry of subgroups that are similar to the one of join-free subgroups, the concept of join-free subgroups seems to be quite restrictive because it requires that the defining graph of the ambient group to be not a join. Therefore, we proposed a concept of almost join-free subgroups for all right-angled Coxeter groups whose defining graphs are not a join of two subgraphs of diameters at least $2$. More precisely, if the ambient graph $\Gamma$ is not a join of two subgraphs with diameters at least $2$, then we can write $\Gamma=\Gamma_1*K$ where $K$ is a (possibly empty) clique and $\Gamma_1$ is a non-join graph. In this case, $G_{\Gamma_1}$ is a finite index subgroup of $G_\Gamma$ and we can extend the concept of join-free subgroups to subgroups in $G_\Gamma$ as follows. An infinite subgroups $H$ of $G_\Gamma$ is \emph{almost join-free} if $H\cap G_{\Gamma_1}$ is a join-free subgroup of $G_{\Gamma_1}$. It is clear that if $\Gamma$ is not a join, an almost join-free subgroup of $G_\Gamma$ is a truly join-free subgroup in $G_\Gamma$. 

%\textbf{Some questions}

\section{Preliminaries}
\subsection{Coarse geometry}
\label{sub1}

We first review the concepts of quasi-isometric embedding, quasi-isometry, quasi-geodesics, geodesics, undistorted subgroups, strongly quasiconvex subgroups, stable subgroups, and subgroup divergence.

\begin{defn} For metric spaces $(X,d_X)$ and $(Y,d_Y)$ be two metric spaces and constants $K \geq 1$ and $L \geq 0$, a map $f\!:X \to Y$ is a \emph{$(K, L)$--quasi-isometric embedding} if for all $x_1, x_2 \in X$, 
%\[\frac{1}{K}d_X(x_1, x_2)-L\leq d_Y \bigl(f(x_1), f(x_2)\bigr)\leq Kd_X(x_1, x_2) + L.\]

A \emph{quasi-isometric embedding} is simply a $(K,L)$--quasi-isometric embedding for some $K,L$. When a quasi-isometric embedding $f\!:X \to Y$ has the additional property that every point in $Y$ is within a bounded distance from the image $f(X)$, we say $f$ is a \emph{quasi-isometry} and $X$ and $Y$ are \emph{quasi-isometric}.

Where $X$ is a subinterval $I$ of $\R$ or $\Z$, we call a $(K, L)$--quasi-isometric embedding $f\!:I \to Y$ a \emph{$(K, L)$--quasi-geodesic}. If $K = 1$ and $L = 0$, then $f\!:I \to Y$ is a \emph{geodesic}.
\end{defn} 

\begin{defn}
Let $G$ be a finitely generated group and $H$ a finitely generated subgroup of $G$. We say $H$ is \emph{undistorted} in $G$ if the inclusion map of subgroup $H$ into the group $G$ is a quasi-isometric embedding (this is independent of the word metrics on $H$ and $G$). We say $H$ is \emph{strongly quasiconvex} in $G$ if for every $K \geq 1,C \geq 0$ there is some $M = M(K,C)$ such that every $(K,C)$--quasi–geodesic in $G$ with endpoints on $H$ is contained in the $M$--neighborhood of $H$. We say $H$ is \emph{stable} in $G$ if $H$ is undistorted in $G$, and for any $K \geq 1$ and $L\geq 0$ there is an $M = M(K,L) \geq 0$ such that any pair of $(K,L)$--quasi-geodesics in $G$ with common endpoints in $H$ have Hausdorff distance no greater than $M$.
\end{defn}

In \cite{Tran2017} the author proved that a subgroup is stable if and only if it is strongly quasiconvex and hyperbolic.

Before we define the concept of subgroup divergence, we need to introduce the notions of domination and equivalence which are the tools to measure the subgroup divergence.
\begin{defn}
Let $\mathcal{M}$ be the collection of all functions from $[0,\infty)$ to $[0,\infty]$. Let $f$ and $g$ be arbitrary elements of $\mathcal{M}$. \emph{The function $f$ is dominated by the function $g$}, denoted \emph{$f\preceq g$}, if there are positive constants $A$, $B$, $C$ and $D$ such that $f(x)\leq Ag(Bx)+Cx$ for all $x>D$. Two function $f$ and $g$ are \emph{equivalent}, denoted \emph{$f\sim g$}, if $f\preceq g$ and $g\preceq f$.

\end{defn}

\begin{rem}
A function $f$ in $\mathcal{M}$ is \emph{linear, quadratic or exponential...} if $f$ is respectively equivalent to any polynomial with degree one, two or any function of the form $a^{bx+c}$, where $a>1, b>0$.
\end{rem}

\begin{defn}
Let $\{\delta^n_{\rho}\}$ and $\{\delta'^n_{\rho}\}$ be two families of functions of $\mathcal{M}$, indexed over $\rho \in (0,1]$ and positive integers $n\geq 2$. \emph{The family $\{\delta^n_{\rho}\}$ is dominated by the family $\{\delta'^n_{\rho}\}$}, denoted \emph{$\{\delta^n_{\rho}\}\preceq \{\delta'^n_{\rho}\}$}, if there exists constant $L\in (0,1]$ and a positive integer $M$ such that $\delta^n_{L\rho}\preceq \delta'^{Mn}_{\rho}$. Two families $\{\delta^n_{\rho}\}$ and $\{\delta'^n_{\rho}\}$ are \emph{equivalent}, denoted \emph{$\{\delta^n_{\rho}\}\sim \{\delta'^n_{\rho}\}$}, if $\{\delta^n_{\rho}\}\preceq \{\delta'^n_{\rho}\}$ and $\{\delta'^n_{\rho}\}\preceq \{\delta^n_{\rho}\}$.
\end{defn}

\begin{rem}
A family $\{\delta^n_{\rho}\}$ is dominated by (or dominates) a function $f$ in $\mathcal{M}$ if $\{\delta^n_{\rho}\}$ is dominated by (or dominates) the family $\{\delta'^n_{\rho}\}$ where $\delta'^n_{\rho}=f$ for all $\rho$ and $n$. The equivalence between a family $\{\delta^n_{\rho}\}$ and a function $f$ in $\mathcal{M}$ can be defined similarly. Thus, a family $\{\delta^n_{\rho}\}$ is linear, quadratic, exponential, etc if $\{\delta^n_{\rho}\}$ is equivalent to the function $f$ where $f$ is linear, quadratic, exponential, etc.
\end{rem}

\begin{defn}
Let $X$ be a geodesic space and $A$ a subspace of $X$. Let $r$ be any positive number.
\begin{enumerate}
\item $N_r(A)=\bigset{x \in X}{d_X(x, A)<r}$
\item $\partial N_r(A)=\bigset{x \in X}{d_X(x, A)=r}$ 
\item $C_r(A)=X-N_r(A)$.
\item Let $d_{r,A}$ be the induced length metric on the complement of the $r$--neighborhood of $A$ in $X$. If the subspace $A$ is clear from context, we can use the notation $d_r$ instead of using $d_{r,A}$. 
\end{enumerate}
\end{defn}

\begin{defn}
Let $(X,A)$ be a pair of geodesic spaces. For each $\rho \in (0,1]$ and positive integer $n\geq 2$, we define a functions $\sigma^n_{\rho}\!:[0, \infty)\to [0, \infty]$ as follows: 

For each positive $r$, if there is no pair of $x_1, x_2 \in \partial N_r(A)$ such that $d_r(x_1, x_2)<\infty$ and $d(x_1,x_2)\geq nr$, we define $\sigma^n_{\rho}(r)=\infty$. Otherwise, we define $\sigma^n_{\rho}(r)=\inf d_{\rho r}(x_1,x_2)$ where the infimum is taken over all $x_1, x_2 \in \partial N_r(A)$ such that $d_r(x_1, x_2)<\infty$ and $d(x_1,x_2)\geq nr$. 

%For each $r$, let $\delta^n_{\rho}(r)=\sup d_{\rho r}(x_1,x_2)$ where the supremum is taken over all $x_1, x_2 \in \partial N_r(A)$ such that $d_r(x_1, x_2)<\infty$ and $d(x_1,x_2)\leq nr$. 

The family of functions $\{\sigma^n_{\rho}\}$ is \emph{the subspace divergence} of $A$ in $X$, denoted $div(X,A)$.
\end{defn}

We now define the subgroup divergence of a subgroup in a finitely generated group.
\begin{defn} 
Let $G$ be a finitely generated group and $H$ its subgroup. We define \emph{the subgroup divergence} of $H$ in $G$, denoted \emph{$div(G,H)$}, to be the subspace divergence of $H$ in the Cayley graph $\Gamma(G,S)$ for some finite generating set $S$. 

%Similarly, we define \emph{the upper relative divergence} of $G$ with respect to $H$, denoted \emph{$Div(G,H)$}, to be the upper relative divergence of the Cayley graph $\Gamma(G,S)$ with respect to $H$ for some finite generating set $S$.
\end{defn}

\begin{rem}
The concept of subgroup divergence was introduced by the author with the name \emph{lower relative divergence} in \cite{MR3361149}. The subgroup divergence is a pair quasi-isometry invariant concept (see Proposition 4.9 in \cite{MR3361149}). This implies that the subgroup divergence of a subgroup on a finitely generated group does not depend on the choice of finite generating sets of the whole group. %Moreover, the lower relative divergence $div(G,H)$ is dominated by the upper relative divergence $Div(G,H)$ with a mild condition on the pair $(G,H)$ (see Theorem 4.14 in \cite{MR3361149}). 
\end{rem}

\subsection{Geometry and algebra of right-angled Coxeter groups}
\label{sss}

In this section, we review the concepts of right-angled Coxeter groups, special subgroups, parabolic subgroups, star subgroups, join subgroup, Davis complexes, and some basic algebraic and geometric properties of right-angled Coxeter groups. 

\begin{defn}
Given a finite, simplicial graph $\Gamma$, the associated \emph{right-angled Coxeter group} $G_\Gamma$ has generating set $S$ the vertices of $\Gamma$, and relations $s^2 = 1$ for all $s$ in $S$ and $st = ts$ whenever $s$ and $t$ are adjacent vertices.

Let $S_1$ be a subset of $S$. The subgroup of $G_\Gamma$ generated by $S_1$ is a right-angled Coxeter group $G_{\Gamma_1}$, where $\Gamma_1$ is the induced subgraph of $\Gamma$ with vertex set $S_1$ (i.e. $\Gamma_1$ is the union of all edges of $\Gamma$ with both endpoints in $S_1$). The subgroup $G_{\Gamma_1}$ is called a \emph{special subgroup} of $G_\Gamma$. Any of its conjugates is called a \emph{parabolic subgroup} of $G_\Gamma$.

A \emph{reduced word} for a group element $g$ in $G_{\Gamma}$ is a minimal length word in the free group $F(S)$ representing $g$. It is proved in \cite{MR1314099} that if $w = v_1v_2\cdots v_p$ is not reduced, then there exists $1 \leq i < j \leq p$ such that $v_i = v_j$ and $v_i$ is adjacent to each of the vertices $v_{i+1},\cdots, v_{j-1}$ (the \emph{Deletion Condition}). Moreover, it is also proved in \cite{MR1314099} that if two reduced words $w$, $w'$ define the same element of $G_\Gamma$, then $w$ can be transformed into $w'$ by a finite number of letter swapping operations (the \emph{Transpose Condition}).

Let $w$ be any word in the vertex generators. We say that $v\in S$ is in the \emph{support} of $w$, written $v\in \supp(w)$, if $v$ occurs as a letter in $w$. For $g \in G_\Gamma$ and $w$ a reduced word representing $g$, we define the \emph{support} of $g$, $\supp(g)$, to be $\supp(w)$. We define the \emph{cyclically support} of $g$, $\csupp(g)$, to be the intersection of all sets $\supp(wgw^{-1})$, where each $w$ is a group element in $G_\Gamma$. It follows from Transpose Condition that $\supp(g)$ and $\csupp(g)$ are well-defined. We say that $u$ is \emph{cyclically reduced} if $\csupp(u)=\supp(u)$. It is also well know that each $g\in G_\Gamma$ has a unique reduced expression $wuw^{-1}$ with $u$ cyclically reduced and therefore $\csupp(g)=\supp(u)$. %We say that $g$ is \emph{cyclically reduced} if, for any $w$ in $G_{\Gamma}$, $\supp(g)\subseteq \supp(wgw^{-1})$. %In, it is shown that each $g$ in $G_\Gamma$ has a unique reduced expression $g= wuw^{-1}$ with $u$ cyclically reduced, and we refer to $u$ as the cyclically reduced part of $g$. Then we write $\csupp(g):= \supp(u)$.
\end{defn}

\begin{defn}
Let $\Gamma_1$ and $\Gamma_2$ be two non-empty graphs, the \emph{join} of $\Gamma_1$ and $\Gamma_2$ is a graph obtained by connecting every vertex of $\Gamma_1$ to every vertex of $\Gamma_2$ by an edge.

Let $J$ be an induced subgraph of $\Gamma$ which decomposes as a join. We call $G_J$ a \emph{join subgroup} of $G_{\Gamma}$. A reduced word $w$ in $G_\Gamma$ is called a \emph{join word} if $w$ represents element in some join subgroup. If $\beta$ is a subword of $w$, we will say that $\beta$ is a \emph{join subword} of $w$ when $\beta$ is itself a join word.

For a vertex $v$ of the graph $\Gamma$ let $\Lk(v)$ denote the subgraph of $\Gamma$ induced by the vertices adjacent to $v$ called the \emph{link} of $v$ and let $\St(v)$ denote the subgraph spanned by $v$ and $\Lk(v)$ called the \emph{star} of $v$. The special subgroup $G_{\St(v)}$ is a \emph{star subgroup} of $G_\Gamma$. Note that a star of a vertex is always a join, but the converse is generally not true. A reduced word $w$ in $G_\Gamma$ is called a \emph{star word} if $w$ represents element in some star subgroup. If $\beta$ is a subword of $w$, we will say that $\beta$ is a \emph{star subword} of $w$ when $\beta$ is itself a star word. Note that a star word is always a join word, but the converse is generally not true.
\end{defn}

\begin{defn}
Given a finite, simplicial graph $\Gamma$, the associated \emph{Davis complex} $\Sigma_\Gamma$ is a cube complex constructed as follows. For every $k$--clique, $T \subset \Gamma$, the special subgroup $G_T$ is isomorphic to the direct product of $k$ copies of $Z_2$. Hence, the Cayley graph of $G_T$ is isomorphic to the 1--skeleton of a $k$--cube. The Davis complex $\Sigma_\Gamma$ has 1--skeleton the Cayley graph of $G_\Gamma$, where edges are given unit length. Additionally, for each $k$--clique, $T \subset \Gamma$, and coset $gG_T$, we glue a unit $k$--cube to $gG_T \subset\Sigma_\Gamma$. The Davis complex $\Sigma_\Gamma$ is a $\CAT(0)$ space and the group $G_\Gamma$ acts properly and cocompactly on the Davis complex $\Sigma_\Gamma$ (see \cite{MR2360474}).
\end{defn}

The idea for the following lemma comes from Lemma 3.1 in \cite{MR2874959}. Moreover, the proof of the following lemma is almost identical to the proof of that lemma. Therefore, we here just copy the proof Lemma 3.1 in \cite{MR2874959} with slight changes that are suitable to the case of RACGs.

\begin{lem} 
\label{lbc}
Let $H_1 = g_1H_v$ and $H_2 = g_2H_w$. Then
\begin{enumerate}
\item $H_1$ intersects $H_2$ if and only if $v$, $w$ commute and $g_1^{-1}g_2 \in G_{\St(v)}G_{\St(w)}$.
\item There is a hyperplane $H_3$ intersecting both $H_1$ and $H_2$ if and only if there is $u$ in $\St(v) \cap \St(w)$ such that $g_1^{-1}g_2 \in G_{\St(v)}G_{\St(u)}G_{\St(w)}$.
\end{enumerate}
\end{lem}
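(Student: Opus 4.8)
The plan is to work entirely inside the Davis complex $\Sigma_\Gamma$, writing $H_v$ for the hyperplane dual to the edge $\{e,v\}$ of the Cayley graph and $gH_v$ for its translate. Because $G_\Gamma$ acts on $\Sigma_\Gamma$ by cubical isometries, a pair $H_1=g_1H_v$, $H_2=g_2H_w$ satisfies $H_1\cap H_2\neq\emptyset$ if and only if $H_v\cap(gH_w)\neq\emptyset$ with $g=g_1^{-1}g_2$, so I would translate by $g_1^{-1}$ at the outset and prove both assertions for the pair $(H_v,gH_w)$. Two structural facts drive the argument. First, every hyperplane has a well-defined type (its $G_\Gamma$--orbit consists of the hyperplanes dual to $v$--edges), and the stabilizer of $H_v$ is exactly $G_{\St(v)}=\langle v\rangle\times G_{\Lk(v)}$: the reflection $v$ fixes $H_v$, while each $t\in\Lk(v)$ carries $\{e,v\}$ to the opposite edge $\{t,tv\}$ of the square $G_{\{v,t\}}$ and hence fixes $H_v$; the reverse inclusion comes from tracking which edges are dual to $H_v$ via the Deletion and Transpose conditions. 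Second, the squares of $\Sigma_\Gamma$ are precisely the cosets $hG_{\{a,b\}}$ with $\{a,b\}$ an edge of $\Gamma$, and such a square is crossed by exactly the two hyperplanes $hH_a$ and $hH_b$, which therefore cross one another; conversely two distinct hyperplanes are transverse if and only if they are the two hyperplanes of a common square.

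For part (1) I would read ``intersects'' as $H_v\cap gH_w\neq\emptyset$, which splits into the cases that the hyperplanes are equal or transverse. In the transverse case, crossing forces $\{H_v,gH_w\}$ to be the hyperplane pair of some square $hG_{\{a,b\}}$; matching types gives $v\neq w$ with $\{v,w\}$ an edge, $h\in G_{\St(v)}$ and $h^{-1}g\in G_{\St(w)}$, so $g\in G_{\St(v)}G_{\St(w)}$, and conversely a factorization $g=h_1h_2$ names the crossing square $h_1G_{\{v,w\}}$. In the equal case, $H_v=gH_w$ forces $v=w$ and $g\in\Stab(H_v)=G_{\St(v)}=G_{\St(v)}G_{\St(w)}$. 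Since ``$v,w$ commute'' means exactly $v=w$ or $v$ adjacent to $w$ (equivalently $w\in\St(v)$), and distinct non-commuting $v,w$ can neither be equal nor span a square, the two cases assemble into the stated biconditional.

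For part (2) I would write a third hyperplane as $H_3=kH_u$ and apply part (1) twice: $H_3$ meets $H_v$ iff $u\in\St(v)$ and $k\in G_{\St(v)}G_{\St(u)}$, and $H_3$ meets $gH_w$ iff $u\in\St(w)$ and $k^{-1}g\in G_{\St(u)}G_{\St(w)}$. Hence such an $H_3$ exists iff there is a vertex $u\in\St(v)\cap\St(w)$ and an element $k$ with $k\in G_{\St(v)}G_{\St(u)}$ and $k^{-1}g\in G_{\St(u)}G_{\St(w)}$; multiplying these shows $g\in G_{\St(v)}G_{\St(u)}G_{\St(w)}$, and conversely a factorization $g=abc$ lets me take $k=ab$, using the idempotence $G_{\St(u)}G_{\St(u)}=G_{\St(u)}$. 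This is exactly the stated condition, and it is $\St$ rather than $\Lk$ precisely because $H_3$ is allowed to coincide with $H_1$ or $H_2$, i.e. the cases $u=v$ or $u=w$. I expect the genuine work to be the two structural inputs of the first paragraph---pinning down $\Stab(H_v)=G_{\St(v)}$ and the coset description of which hyperplanes cross, where the Deletion and Transpose conditions enter---after which everything reduces to the bookkeeping of products of star subgroups, essentially identical to the right-angled Artin group argument of Lemma 3.1 in \cite{MR2874959}.
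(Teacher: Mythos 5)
Your proof is correct and takes essentially the same route as the paper: the paper gives no independent argument for Lemma~\ref{lbc} but declares its proof to be Behrstock--Charney's Lemma 3.1 argument transplanted to the Davis complex, i.e.\ exactly what you reconstruct ($\Stab(H_v)=G_{\St(v)}$, crossing of distinct hyperplanes characterized by a common square $hG_{\{v,w\}}$, and part (2) obtained by applying part (1) twice together with $G_{\St(u)}G_{\St(u)}=G_{\St(u)}$). You also correctly handle the one genuinely RACG-specific point, namely that the reflection $v$ now lies in $\Stab(H_v)$, so the equal-hyperplane case $v=w$, $g\in G_{\St(v)}$ folds into the stated biconditional rather than being an exception.
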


\section{Join-free subgroups and malnormality in right-angled Coxeter groups}
\label{stt1}

In this section, we define the concepts of join-free subgroups and star-free subgroups in right-angled Coxeter groups. We study the connections among parabolic subgroups, star-free subgroups, and join-free subgroups. We also give a proof of the theorem that an infinite proper malnormal subgroup of a right-angled Coxeter group with connected defining graph is always join-free. Finally we characterize almost malnormal parabolic subgroups and almost malnormal collections of parabolic subgroups in right-angled Coxeter groups in terms of their defining subgraphs.

\begin{defn}
Let $\Gamma$ be a simplical finite graph. An infinite subgroup $H$ of the right-angled Coxeter group $G_\Gamma$ is \emph{join-free} if none of its infinite order elements are conjugate into a join subgroup. An infinite subgroup $H$ of $G_\Gamma$ is \emph{star-free} if none of its infinite order elements are conjugate into a star subgroup. 
\end{defn}

\begin{rem}
It is clear from the definition that if the ambient graph $\Gamma$ is a join (resp. a star), then the right-angled Coxeter group $G_\Gamma$ contains no join-free subgroup (resp. no star-free subgroup). Therefore, whenever we assume the right-angled Coxeter group $G_\Gamma$ contains a join-free subgroup (resp. a star-free subgroup) the ambient graph $\Gamma$ is understood implicitly to be not a join (resp. not a star).

It is clear that a join-free subgroup of $G_\Gamma$ is star-free, but the converse is false. For example, we can chose $\Gamma$ as a square labeled cyclically by the vertices $a, b, c, d$. Then \[G_\Gamma=\langle a,c\rangle \times \langle b,d\rangle \cong D_\infty \times D_\infty.\]
Since $\Gamma$ is a join graph, $G_\Gamma$ has no join-free subgroup. However, any cyclic group generated by cyclically reduced word with full support is star-free. In particular, the cyclic subgroup $\langle abcd \rangle$ is a star-free subgroup.

%If the ambient graph $\Gamma$ is not a join of two subgraphs with diameters at least $2$, then we can write $\Gamma=\Gamma_1*K$ where $K$ is a (possibly empty) clique and $\Gamma_1$ is a non-join graph. In this case, $G_{\Gamma_1}$ is a finite index subgroup of $G_\Gamma$ and we can extend the concept of join-free (resp. star-free) subgroups to subgroups in $G_\Gamma$ as follows. An infinite subgroups $H$ of $G_\Gamma$ is \emph{almost join-free} (resp. \emph{almost star-free}) if $H\cap G_{\Gamma_1}$ is a join-free (resp. star-free) subgroup of $G_{\Gamma_1}$. It is clear that if $\Gamma$ is not a join, almost join-free (resp. almost star-free) subgroups of $G_\Gamma$ are truly join-free (resp. star-free) in $G_\Gamma$.
\end{rem}

Now we can connect parabolic subgroups, star-free subgroups, and join-free subgroups.

\begin{prop}
\label{exoplgips}
Let $\Gamma$ be a non-join connected graph. Let $H$ be a conjugate of a special subgroup induced by a subset $S_1$ of vertex set of $\Gamma$. Then the following are equivalent: 
\begin{enumerate}
\item $S_1$ contains at least two non-adjacent vertices and the distance in $\Gamma$ between any two elements of $S_1$ is different from 2. 
\item $H$ is join-free.
\item $H$ is star-free.
\end{enumerate}
A subgroup $H$ satisfying some (all) above condition is virtually a free group.
\end{prop}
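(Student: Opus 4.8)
The plan is to reduce to the case $H = G_{\Lambda}$ where $\Lambda = \Gamma[S_1]$ is the induced subgraph on $S_1$, since each of the three conditions and the conclusion of virtual freeness is invariant under conjugation. The essential tool will be the standard fact, immediate from the uniqueness of the cyclically reduced expression recorded above, that an element $h \in G_\Gamma$ is conjugate into a special subgroup $G_B$ (for $B \subseteq S$) if and only if $\csupp(h) \subseteq B$. In particular $h$ is conjugate into a join (respectively star) subgroup exactly when $\csupp(h)$ lies in the vertex set of some join (respectively star) subgraph. I would then prove the three conditions equivalent through the cycle $(1) \Rightarrow (2) \Rightarrow (3) \Rightarrow (1)$ and treat the virtual freeness separately.

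For $(1) \Rightarrow (2)$, first note that (1) forces $G_\Lambda$ to be infinite, since $S_1$ contains two non-adjacent vertices. Given any infinite order $h \in G_\Lambda$, its cyclically reduced form cannot be supported on a clique, so $\csupp(h) \subseteq S_1$ contains two non-adjacent vertices $s, t$. If $h$ were conjugate into a join subgroup $G_J$ with $J = J_1 * J_2$, then $\csupp(h)$ would lie in the vertex set of $J$; since every cross-edge of the join is present, $s$ and $t$ would have to lie in a single factor, say $J_1$. But the other factor $J_2$ is nonempty, and any of its vertices $w$ is adjacent to both $s$ and $t$, forcing $s,t \in S_1$ to be at distance $2$ in $\Gamma$, contrary to (1). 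Hence no infinite order element of $G_\Lambda$ is conjugate into a join subgroup, so $H$ is join-free.

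The implication $(2) \Rightarrow (3)$ is immediate, since every star subgraph is a join and so conjugacy into a star subgroup is a special case of conjugacy into a join subgroup. For $(3) \Rightarrow (1)$ I argue contrapositively. If $S_1$ were a clique then $G_\Lambda$ would be finite and $H$ could not be star-free, so (3) already supplies two non-adjacent vertices in $S_1$. If in addition some $s, t \in S_1$ were at distance $2$ in $\Gamma$, they would admit a common neighbor $u$, whence $s, t \in \Lk(u)$ and the infinite order element $st$ lies in the star subgroup $G_{\St(u)}$, contradicting star-freeness. Thus (3) yields precisely (1).

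Finally, for virtual freeness I would observe that condition (1) forbids $\Lambda$ from containing an induced path on three vertices: such a path $s - u - t$ with $s, u, t \in S_1$ would place the non-adjacent pair $s, t \in S_1$ at distance $2$ in $\Gamma$. A graph with no induced three-vertex path is a disjoint union of cliques, so $G_\Lambda$ decomposes as the free product of the finite special subgroups attached to these cliques, and a free product of finitely many finite groups is virtually free; since $S_1$ has two non-adjacent vertices this decomposition is non-trivial, and $H \cong G_\Lambda$ is virtually free. I expect the point requiring the most care to be the join argument in $(1) \Rightarrow (2)$, namely the observation that the two non-adjacent vertices of $\csupp(h)$ are trapped in a single join factor while the complementary factor, being nonempty, supplies a common neighbor and hence a forbidden distance-$2$ pair; once this is isolated, the remaining implications and the cluster-graph structure underlying virtual freeness are routine.
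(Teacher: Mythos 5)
Your proof is correct and takes essentially the same approach as the paper's: the same reduction to a special subgroup, the same cyclic-support criterion for conjugacy into join/star subgroups driving $(1)\Rightarrow(2)$ and $(3)\Rightarrow(1)$ (with $(2)\Rightarrow(3)$ immediate), and the same decomposition of the induced subgraph on $S_1$ into at least two disjoint cliques, giving a nontrivial free product of finite groups for virtual freeness. The only difference is cosmetic: you explicitly note that $(1)$ forces $G_{\Lambda}$ to be infinite before invoking the definition of join-free, a point the paper leaves implicit.
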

 
\begin{proof}
Since any join-free subgroup is star-free, then we only need to prove (1) implies (2), and (3) implies (1). Without the loss of generality we can assume that $H$ is a special subgroup. We first prove that (3) implies (1). In fact, if vertices in $S_1$ are pairwise adjacent, then $H$ is a finite subgroup and then $H$ is not star-free. If $H$ has two vertices $u$ and $v$ with distance 2 in $\Gamma$, then $h=uv$ is an infinite order of $H$ which belongs to some star subgroup. Therefore, $H$ is not a star-free subgroups in this case.

We now prove that (1) implies (2). Assume that $H$ is not join-free. Then there is an infinite order element $h$ in $H$ that is conjugate to a join subgroup. Then $\csupp(h)$ is a subset of the vertex set of some induced join subgraph $\Gamma_1$. Since $h$ is an infinite order element of the special group generated by $S_1$, $\csupp(h)$ is a subset of $S_1$ and there are two vertices $v_1$ and $v_2$ in $\csupp(h)$ that are not adjacent in $\Gamma$. Since two non-adjacent vertices $v_1$ and $v_2$ both lie in the join subgraph $\Gamma_1$, the distance in $\Gamma$ between $v_1$ and $v_2$ is exactly 2. This is a contradiction. Therefore, $H$ is a join-free subgroup.

We observe that if $S_1$ contains at least two non-adjacent vertices and the distance in $\Gamma$ between any two elements of $S_1$ is different from 2, then the subgraph induced by $S_1$ is disconnected and each component is a single point or a clique. Therefore, $H$ is a free product of more than one finite subgroups. This implies that $H$ is a virtually free subgroup.
\end{proof} 

By the above proposition, parabolic join-free subgroups are always virtually free subgroups. We remark that any infinite subgroup of a join-free subgroup is also join-free. Therefore, we conclude that any infinite subgroup which is conjugate into a join-free special subgroup is also virtually free join-free subgroup. In general, we will show that a join-free subgroup is not necessarily conjugate into a join-free special subgroup. However, we will prove later that a join-free subgroups is always virtually free even when it is not conjugate into a join-free special subgroup.

\begin{figure}
\begin{tikzpicture}[scale=1]

\draw (0,0) node[circle,fill,inner sep=1pt, color=black](1){} -- (2,0) node[circle,fill,inner sep=1pt, color=black](1){}-- (4,0) node[circle,fill,inner sep=1pt, color=black](1){}-- (6,0) node[circle,fill,inner sep=1pt, color=black](1){} -- (3,2) node[circle,fill,inner sep=1pt, color=black](1){}-- (0,0) node[circle,fill,inner sep=1pt, color=black](1){}-- (2,-1) node[circle,fill,inner sep=1pt, color=black](1){}-- (4,0) node[circle,fill,inner sep=1pt, color=black](1){};%-- (4,1) node[circle,fill,inner sep=1pt, color=black](1){}-- (6,0) node[circle,fill,inner sep=1pt, color=black](1){}; 

\draw (2,0) node[circle,fill,inner sep=1pt, color=black](1){} -- (4,1) node[circle,fill,inner sep=1pt, color=black](1){}-- (6,0) node[circle,fill,inner sep=1pt, color=black](1){};

\draw (2,0) node[circle,fill,inner sep=1pt, color=black](1){} -- (2,-1) node[circle,fill,inner sep=1pt, color=black](1){};

\draw (4,0) node[circle,fill,inner sep=1pt, color=black](1){} -- (4,1) node[circle,fill,inner sep=1pt, color=black](1){};

\node at (0,-0.25) {$a$};
\node at (2,0.25) {$b$};
\node at (4,-0.25) {$c$};
\node at (6,0.25) {$d$};
\node at (3,2.25) {$t$};
\node at (2,-1.25) {$d_1$};
\node at (3.75,1.15) {$a_1$};

\end{tikzpicture}

\caption{}
\label{aa}
\end{figure}

We now come up with an example of a join-free subgroup which is not conjugate into a join-free special subgroup. Let $\Gamma$ be a graph in Figure \ref{aa}. Then we observe that the distance between any two non-adjacent vertices in $\Gamma$ is exactly two. Therefore, the group $G_\Gamma$ does not contains any join-free parabolic subgroups by Proposition \ref{exoplgips}. Let $x=(aa_1)(dd_1)(aa_1)$, $y=(dd_1)(aa_1)(dd_1)$, and $H$ a subgroup generated by $x$ and $y$. Then $H$ is a free subgroup of rank two and $H$ is also a join-free subgroup (see the following proposition). 

\begin{prop}
\label{ppss}
Let $\Gamma$ be a graph in Figure \ref{aa} and $H$ a subgroup generated by $x=(aa_1)(dd_1)(aa_1)$ and $y=(dd_1)(aa_1)(dd_1)$. Then $H$ is a free subgroup of rank two and $H$ is also a join-free subgroup.
\end{prop}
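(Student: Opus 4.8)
\noindent\emph{Proof proposal.} The plan is to exploit the fact that the four vertices $a,a_1,d,d_1$ span an especially simple special subgroup. First I would record that in the graph of Figure~\ref{aa} the induced subgraph on $\{a,a_1,d,d_1\}$ has exactly the two edges $a$--$d_1$ and $a_1$--$d$; hence it is a disjoint union of two edges, and the corresponding special subgroup splits as a free product
\[
 G_{\{a,a_1,d,d_1\}}=P*Q,\qquad P=\gen{a,d_1}\cong\Z_2\times\Z_2,\quad Q=\gen{a_1,d}\cong\Z_2\times\Z_2 .
\]
The decisive feature is that no vertex of $P$ is adjacent to any vertex of $Q$, so there are no commutations across the two factors. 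Consequently a free--product normal form (a concatenation of reduced $P$-- and $Q$--syllables) is automatically a reduced word of $G_\Gamma$, and it is cyclically reduced in $G_\Gamma$ precisely when it is cyclically reduced in $P*Q$; in particular $\csupp$ of an element of this subgroup can be read off from its free--product normal form.

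Next I would compute the normal forms of the generators, writing $x=a\cdot(a_1d)\cdot(d_1a)\cdot a_1$ and $y=d\cdot(d_1a)\cdot(a_1d)\cdot d_1$ (with $x^{-1},y^{-1}$ handled similarly). Each is an alternating product of exactly four non-trivial syllables, and in every case the two \emph{interior} syllables already involve all four letters $a,a_1,d,d_1$. I would then carry out the short case check that for every admissible junction $g\cdot h$ with $g,h\in\{x^{\pm1},y^{\pm1}\}$ and $h\neq g^{-1}$, the terminal syllable of $g$ and the initial syllable of $h$ either lie in different factors (no interaction) or multiply to a \emph{non-trivial} element of a single factor. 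This is the key computation: the only junctions producing the identity are exactly the forbidden ones of the form $g\cdot g^{-1}$, so no admissible junction can trigger cancellation and the interior syllables are never disturbed.

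From this both assertions follow quickly. For freeness, a non-empty reduced word $g_1\cdots g_n$ in $x^{\pm1},y^{\pm1}$ has normal form of length at least $4n-(n-1)=3n+1>0$, so it is non-trivial; hence $x,y$ freely generate a rank-two free subgroup. For join-freeness I may assume, after replacing $g$ by a conjugate, that the word $w(x,y)$ representing an infinite-order element of $H$ is cyclically reduced in $\gen{x,y}$. The same junction analysis applied around the cyclic word (including the seam between the last and the first block) shows that the resulting normal form is cyclically reduced in $G_\Gamma$ and still contains the interior syllables of every block; since those already use all four vertices, $\csupp(g)=\{a,a_1,d,d_1\}$. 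It then remains to observe, by direct inspection of $\Gamma$, that this set lies in no induced join subgraph: the non-adjacency relation on $\{a,a_1,d,d_1\}$ is the $4$--cycle $a$--$a_1$--$d_1$--$d$--$a$, which forces all four vertices into a single join factor, yet no vertex of $\Gamma$ (the only candidates being $b,c,t$) is adjacent to all four. Since an infinite-order element is conjugate into a join subgroup only if its cyclic support lies in the vertex set of an induced join subgraph (as in the proof of Proposition~\ref{exoplgips}), it follows that $g$ is not conjugate into any join subgroup, so $H$ is join-free.

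The main obstacle is the interface between the free--product combinatorics and the right-angled Coxeter combinatorics: one must be certain that the free--product (cyclically) reduced form is genuinely the $G_\Gamma$ (cyclically) reduced form, so that $\csupp(g)$ really equals the full set $\{a,a_1,d,d_1\}$ rather than some proper—and possibly join-contained—subset. This is exactly what the absence of cross-commutations between $P$ and $Q$ guarantees, and verifying the non-triviality of every admissible junction product is the single calculation on which both conclusions rest.
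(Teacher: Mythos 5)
Your proposal is correct, but it runs on a different engine from the paper's. The paper never introduces the splitting $G_{\{a,a_1,d,d_1\}}=\gen{a,d_1}*\gen{a_1,d}\cong(\Z_2\times\Z_2)*(\Z_2\times\Z_2)$; instead it argues at the level of individual letters: it first checks, via the Deletion Condition, that each of the twelve admissible two-block concatenations $u_iu_{i+1}$ is reduced in $G_\Gamma$, and then localizes any hypothetical deletion in the full substituted word $\bar{w}$ to such a two-block window, using exactly the fact you isolate — every block has full support $\{a,a_1,d,d_1\}$, so a whole block cannot sit between the two cancelling letters. Your free-product normal-form argument packages this more cleanly: once the twelve junction products of boundary syllables are checked to be either in different factors or nontrivial in a single factor, free-product theory rules out all long-range cancellation automatically, and it also yields the cyclic-reducedness transfer and the syllable count $4n-(n-1)=3n+1$ essentially for free, where the paper instead records the identity $\abs{h}_S=6\abs{h}_T$ and asserts the transfer of cyclic reducedness from $(H,T)$ to $(G_\Gamma,S)$. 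The two proofs reconverge at the end: both reduce join-freeness to the graph-theoretic fact that $\{a,a_1,d,d_1\}$ lies in no induced join subgraph — the paper by noting the four vertices would have to lie in one join factor and hence in the star of a vertex of the other factor, you by noting the complement of the induced subgraph is a connected $4$--cycle and that none of $b,c,t$ is adjacent to all four vertices. What your route buys is brevity and transparency (the verification is a handful of multiplications in $\Z_2\times\Z_2$, and the argument visibly generalizes to any pair of non-adjacent edges with no cross-commutations); what the paper's route buys is self-containedness, since it uses only the Deletion and Transpose Conditions already set up in its preliminaries, whereas your approach tacitly needs the small supplementary deletion-condition argument (which you correctly flag as the "main obstacle") that free-product (cyclically) reduced syllable words are genuinely (cyclically) reduced in $G_\Gamma$.
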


\begin{proof}
Let $S$ be the vertex set of $\Gamma$ and $T=\{x,y,x^{-1},y^{-1}\}$. Let $w=u_1u_2\cdots u_n$ be an arbitrary freely reduced word in $T$ and $\bar{w}$ be the word obtained from $w$ by replacing $x$, $x^{-1}$, $y$, $y^{-1}$ by their corresponding subwords in $G_\Gamma$. We remark that $w$ and $\bar{w}$ both represent the same element in $H$. We will prove that $\bar{w}$ is a reduced word in $G_\Gamma$. 

Since $w$ is a freely reduced word in $T$, then subword of two consecutive elements $u_iu_{i+1}$ in $w$ must lie in \[\{xx, x^{-1}x^{-1}, yy, y^{-1}y^{-1}, xy, y^{-1}x^{-1}, yx, x^{-1}y^{-1}, x^{-1}y, y^{-1}x, xy^{-1}, yx^{-1}\}.\] By using the Deletion Condition, we can check that any subwords in $\bar{w}$ that replaces two consecutive elements $u_iu_{i+1}$ in $w$ is reduced. Assume for the contradiction that $\bar{w}$ is not a reduced word in $G_\Gamma$. Then using the Deletion Condition, there exists $1 \leq \ell < k \leq 6n$ such that the $\ell^{th}$ element $v_{\ell}$ and the $k^{th}$ element $v_k$ in $\bar{w}$ are labelled by the same generator in $S$ and $v_{\ell}$ commutes with all elements between $v_{\ell}$ and $v_k$. We can assume further that no element of $\bar{w}$ between $v_{\ell}$ and $v_k$ is labelled by the same generator as $v_{\ell}$ and $v_k$. Also, any subword of $\bar{w}$ that replaces $x$, $y$, $x^{-1}$, $y^{-1}$ has the same support as $\bar{w}$. Therefore, $v_{\ell}$ and $v_k$ must lie in the subword that replaces two consecutive elements $u_iu_{i+1}$ in $w$. This implies that the subword that replaces two consecutive elements $u_iu_{i+1}$ in $w$ is not reduced. This is a contradiction. Therefore, $\bar{w}$ is a reduced word in $G_\Gamma$. This implies that $H$ is a free subgroup of rank $2$ and $\abs{h}_S=6\abs{h}_T$ for each element $h$ in $H$. This fact also implies that if $h$ is cyclically reduced in $(H,T)$, then $h$ is also cyclically reduced in $(G_\Gamma,S)$. 

We now assume for the contradiction that $H$ is not a join-free subgroup. Then there is a nontrivial element $h$ that is conjugate into a join subgroup. We can assume that $h$ is cyclically reduced in $(H,T)$. Therefore, $h$ is also cyclically reduced in $(G_\Gamma,S)$ and $h$ lies in a join subgroup. Therefore, the support $\supp(h)=\{a,a_1,d,d_1\}$ must lie in the vertex set of some join subgraph $\Gamma'=\Gamma_1*\Gamma_2$. Since the subgraph of $\Gamma$ induced by $\supp(h)$ is not a join, then $\supp(h)=\{a,a_1,d,d_1\}$ must lie entirely in $\Gamma_1$ or $\Gamma_2$ (say $\Gamma_1$). Therefore, $\supp(h)=\{a,a_1,d,d_1\}$ lies entirely in the star of some vertex in $\Gamma_2$. We can check easily that this is a contradiction. Therefore, $H$ is a join-free subgroup. 
\end{proof}

We will prove later that all join-free subgroups in RACGs are stable. However, the converse is not true. For example, a cyclic subgroup $H$ of a right-angled Coxeter group $G_\Gamma$ generated by a rank-one isometry $g$ is stable but $H$ is not a join-free subgroup when $g$ is conjugate into a star subgroup. We can also construct a non-virtually cyclic stable subgroup which is not join-free as follows. Let $\Gamma$ be a connected graph which has no separating clique and no embedded cycles of length four. We assume also that $\Gamma$ contains an embedded cycle $C$ of length more than four. Then the right-angled Coxeter group $G_\Gamma$ is a one-ended hyperbolic group (see Theorem 8.7.2 and Corollary 12.6.3 in \cite{MR2360474}) and the special subgroup $G_C$ is a non-virtually cyclic quasiconvex subgroup of $G_\Gamma$. Therefore, $G_C$ is a non-virtually cyclic stable subgroup. It is obvious that the vertex set of $C$ does not satisfy conditions in Proposition \ref{exoplgips}. Therefore, $G_C$ is not a join-free subgroup.

We now prove that an infinite proper malnormal subgroup of a right-angled Coxeter group with non-join connected defining graph is always join-free.

\begin{proof}[Proof of Theorem~\ref{keystep1}] 

We first prove that for each vertex $s$ of $\Gamma$ and each group element $g$ in $G_\Gamma$ the group element $gsg^{-1}$ never be an element of $H$. Assume for a contradiction that there is a vertex $s_0$ of $\Gamma$ and a group element $g_0$ in $G_\Gamma$ such that $g_0s_0g_0^{-1}$ is a group element of $H$. Therefore, $s_0$ is a group element of group $K=g_0^{-1}Hg_0$. We note that $K$ is also a malnormal subgroup. Let $s$ be an arbitrary adjacent vertex of $s_0$. Then we see that $sKs\cap K$ contains the non-identity element $s_0$. Therefore, $s$ must be also a group element of $K$. Since $\Gamma$ is connected, all vertices of $\Gamma$ must be group elements of $K$. This implies that $K$ (also $H$) is the ambient group $G_\Gamma$ which is a contradiction. Therefore, for each vertex $s$ of $\Gamma$ and each group element $g$ in $G_\Gamma$ the group element $gsg^{-1}$ never be an element of $H$. 

We now assume for a contradiction that $H$ is not a join-free subgroup. Then there is an infinite order element $h$ in $H$ such that $h$ belongs to some parabolic subgroup $gG_\Lambda g^{-1}$ where $\Lambda$ is a join of two other subgraphs $\Lambda_1$ and $\Lambda_2$. We note that $H\cap gG_\Lambda g^{-1}$ is also malnormal in $gG_\Lambda g^{-1}$. If both subgraphs $\Lambda_1$ and $\Lambda_2$ have diameter at least 2, then $H\cap gG_\Lambda g^{-1}=gG_\Lambda g^{-1}$ by Lemma~\ref{simplecase}. In particular, for each vertex $s$ of $\Lambda$ the group element $gsg^{-1}$ belongs to $H$ which is a contradiction. We now consider the case either $\Lambda_1$ or $\Lambda_2$ (say $\Lambda_1$) is consists of a single vertex or has diameter $1$. Let $s$ be an arbitrary vertex of $\Lambda_1$. Then the group elements $g'=gsg^{-1}$ commutes to all elements in $gG_\Lambda g^{-1}$. In particular, $g'$ commutes to $h$ and therefore $g'$ is a group element of $H$ which is also a contradiction. Therefore, $H$ must be a join-free subgroup. 
\end{proof}

%For the proof of Theorem~\ref{keystep2} we observe that for each vertex $s$ of $\Gamma$ and each group element $g$ in $G_\Gamma$ the group element $gsg^{-1}$ never be an element of $H$ since $H$ is torsion free. Therefore, the rest of the proof of Theorem~\ref{keystep2} is almost identical to the one of Theorem~\ref{keystep1}

\begin{rem}
\label{rmotivation}
We remark that if $\Gamma$ is a join graph, then right-angled Coxeter group $G_\Gamma$ does not contain any infinite proper malnormal subgroup. In fact if $\Gamma$ is a join of two subgraphs of diameters at least $2$, then $G_\Gamma$ contains no infinite proper malnormal subgroup by Lemma~\ref{simplecase}. Otherwise, $\Gamma$ is a join of a subgraph of diameter at least $2$ and a non-empty clique. In this case there is a vertex $v$ of $\Gamma$ that commutes to all groups elements of $G_\Gamma$. By the proof of Theorem~\ref{keystep1}, if $H$ is an infinite proper malnormal subgroup of $G_\Gamma$, then the vertex $v$ never be an element of $H$. Moreover, since $v$ commutes to all groups elements of $G_\Gamma$, we have $vHv=H$ which is a contradiction. Therefore, if $\Gamma$ is a join graph, then the right-angled Coxeter group $G_\Gamma$ does not contain any infinite proper malnormal subgroup. %By an analogous argument we can also conclude that if $\Gamma$ is a join graph, then the right-angled Coxeter group $G_\Gamma$ does not contain any nontrivial torsion free almost malnormal subgroup.
\end{rem}

We now characterize almost malnormal parabolic subgroups in right-angled Coxeter groups.

\begin{proof}[Proof of Proposition~\ref{almost}]
We note that if a subgroup is almost malnormal, then all its conjugates are also almost malnormal. Therefore, we can assume that $H=G_\Lambda$ is the special subgroup induced by $\Lambda$. We first assume $G_\Lambda$ is almost malnormal and we will prove that no vertex of $\Gamma-\Lambda$ commutes to two non-adjacent vertices of $\Lambda$. Assume for a contradiction that there is a vertex $u$ in $\Gamma-\Lambda$ that commutes with two non-adjacent vertices $v_1$ and $v_2$ of $\Lambda$. Then subgroup $uG_\Lambda u^{-1}\cap G_\Lambda$ contains an infinite order group element $v_1v_2$. Since subgroup $G_\Lambda$ is almost malnormal, $u$ must be a group element of $G_\Lambda$ and this implies that $u$ is a vertex of $\Lambda$ which is a contradiction. Thus, no vertex of $\Gamma-\Lambda$ commutes to two non-adjacent vertices of $\Lambda$.

We now assume that no vertex of $\Gamma-\Lambda$ commutes to two non-adjacent vertices of $\Lambda$ and we will prove that $G_\Lambda$ is almost malnormal. Assume for a contradiction that $G_\Lambda$ is not almost malnormal. Then there is a group element $g$ not in $G_\Lambda$ such that $gG_\Lambda g^{-1}\cap G_\Lambda$ is infinite. We can choose such element $g$ such that $\abs{g}_S$ is minimal where $S$ is the vertex set of $\Gamma$. Let $w_0$ be a reduced word in $S$ that represents $g$. Then the reverse word $\bar{w}_0$ of $w_0$ is a reduced word that represents $g^{-1}$. Since $g$ is not an element of $G_\Lambda$, some element in $w_0$ must be a vertex of $\Gamma-\Lambda$.

Since $gG_\Lambda g^{-1}\cap G_\Lambda$ is infinite, there is an infinite order element $h$ in $G_\Lambda$ such that $ghg^{-1}$ is also an element in $G_\Lambda$. Let $w_1$ be a reduced word in $S$ that represents $h$. Then all elements of $w_1$ are vertices of $\Lambda$ and there are at least two of them which are non-adjacent vertices of $\Lambda$. Therefore, the concatenation $w=w_0w_1\bar{w}_0$ represents the group element $ghg^{-1}$ in $G_\Lambda$. We can write $w=v_1v_2\cdots v_p$. 

Since $w$ contains a vertex not in $\Lambda$, then $w$ is not reduced. Then there exists $1 \leq i < j \leq p$ such that $v_i = v_j$ and $v_i$ is adjacent to each of the vertices $v_{i+1},\cdots, v_{j-1}$. Since $w_0$, $w_1$, and $\bar{w}_0$ are all reduced, $v_i$ and $v_j$ can not lie in the same block in $w$. We first assume that $v_i$ lies in $w_0$ and $v_j$ lies in $w_1$. Then $v_i$ is a vertex in $\Lambda$ and we have $g=g'v_i$ where $\abs{g'}_S=\abs{g}_S-1$. Therefore, $g'G_\Lambda g'^{-1}\cap G_\Lambda=gG_\Lambda g^{-1}\cap G_\Lambda$ which contradicts to the choice of $g$. By an analogous argument we also get the same contradiction if we assume $v_i$ lies in $w_1$ and $v_j$ lies in $\bar{w}_0$. Therefore, $v_i$ must lie in $w_0$ and $v_j$ must lie in $\bar{w}_0$. Moreover, $v_i$ is not a vertex of $\Lambda$ by an analogous argument. Since $w_1$ contains at least two non-adjacent vertices of $\Lambda$, the vertex $v_i$ must commute to both these vertices which is a contradiction. Therefore, $G_\Lambda$ is almost malnormal.
\end{proof}

Combining Proposition~\ref{almost} above and Proposition 3.4 in \cite{MR3365774} we obtain the following corollary which characterizes almost malnormal collections of parabolic subgroups in right-angled Coxeter groups.

\begin{cor}
\label{cmotivation4}
Let $\Gamma$ be a simplicial finite graph and $$\mathcal{H}=\{g_1G_{\Lambda_1}g_1^{-1}, g_2G_{\Lambda_2}g_2^{-1}, \cdots, g_nG_{\Lambda_n}g_n^{-1}\}$$ a collection of parabolic subgroups of the right-angled Coxeter group $G_\Gamma$. Then $\mathcal{H}$ is an almost malnormal collection in $G_\Gamma$ if and only if the following holds:
\begin{enumerate}
\item For each $\Lambda_i$ no vertex outside $\Lambda_i$ commutes to non-adjacent vertices of $\Lambda_i$; and
\item $\Lambda_i\cap \Lambda_j$ is empty or a clique for each $i\neq j$. 
\end{enumerate}
\end{cor}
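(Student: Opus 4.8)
The plan is to split the collection almost malnormality condition into its \emph{diagonal} part (comparing each $g_iG_{\Lambda_i}g_i^{-1}$ with itself) and its \emph{off-diagonal} part (comparing $g_iG_{\Lambda_i}g_i^{-1}$ with $g_jG_{\Lambda_j}g_j^{-1}$ for $i\neq j$), and to recognize that the diagonal part is governed by Proposition~\ref{almost} while the off-diagonal part is governed by the behaviour of cyclic supports (Proposition 3.4 in \cite{MR3365774}). Throughout I would assume the listed parabolic subgroups are pairwise distinct.

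First I would observe that $\mathcal{H}$ being almost malnormal forces, on taking $H=H'=g_iG_{\Lambda_i}g_i^{-1}$, that each individual parabolic subgroup $g_iG_{\Lambda_i}g_i^{-1}$ is almost malnormal. Since almost malnormality is preserved under conjugation, Proposition~\ref{almost} applies to each $g_iG_{\Lambda_i}g_i^{-1}$ and shows that this happens precisely when no vertex of $\Gamma-\Lambda_i$ commutes to two non-adjacent vertices of $\Lambda_i$, which is exactly condition~(1). Conversely, condition~(1) together with Proposition~\ref{almost} returns the almost malnormality of each individual member, and this settles every instance of the collection condition in which $H=H'$.

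For the off-diagonal part, fix $i\neq j$ and set $H_i=g_iG_{\Lambda_i}g_i^{-1}$ and $H_j=g_jG_{\Lambda_j}g_j^{-1}$. Since the members are distinct, almost malnormality of the collection is equivalent to the requirement that $\abs{H_i\cap gH_jg^{-1}}=\infty$ be impossible for every $g\in G_\Gamma$. I would control these intersections using cyclic supports: if $v\in H_i\cap gH_jg^{-1}$ has infinite order, then $\csupp(v)$ is conjugation-invariant and must be contained in both $\Lambda_i$ and $\Lambda_j$, hence in $\Lambda_i\cap\Lambda_j$; as $v$ has infinite order, $\csupp(v)$ contains two non-adjacent vertices, so $\Lambda_i\cap\Lambda_j$ is neither empty nor a clique. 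This is the content of Proposition 3.4 in \cite{MR3365774}. Thus a failure of condition~(2) for some pair produces, via the element $uv$ for two non-adjacent vertices $u,v\in\Lambda_i\cap\Lambda_j$ together with the conjugator $g=g_ig_j^{-1}$, an infinite intersection $H_i\cap gH_jg^{-1}$ with $H_i\neq H_j$, contradicting almost malnormality; conversely, if condition~(2) holds then no infinite order element can lie in any $H_i\cap gH_jg^{-1}$, so no off-diagonal instance of the collection condition is ever triggered.

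Combining the two parts yields the equivalence. The main obstacle is the off-diagonal analysis: one must ensure that an \emph{arbitrary} conjugate of $G_{\Lambda_j}$ cannot meet $G_{\Lambda_i}$ in an infinite group unless $\Lambda_i\cap\Lambda_j$ already contains two non-adjacent vertices. The conjugation-invariance of $\csupp$, equivalently Proposition 3.4 in \cite{MR3365774}, is precisely what removes the dependence on the conjugators $g_i,g_j$ and on the ambient conjugator $g$, reducing the off-diagonal condition to the purely graph-theoretic statement~(2).
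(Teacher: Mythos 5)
Your proposal is correct and takes essentially the same route as the paper: the paper obtains the corollary precisely by combining Proposition~\ref{almost} (your diagonal part) with Proposition 3.4 of \cite{MR3365774} on intersections of parabolic subgroups (your off-diagonal part), and your cyclic-support argument is just an inlined proof of the consequence of that proposition that is actually needed. The one step you leave implicit---that an infinite intersection must contain an infinite-order element, which holds since right-angled Coxeter groups are virtually torsion-free---is left equally implicit in the paper's own argument for Proposition~\ref{almost}.
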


\section{Dual van Kampen diagrams for right-angled Coxeter groups}
\label{copy1}
In this section, we construct dual van Kampen diagrams for right-angled Coxeter groups which are almost identical to dual van Kampen diagrams for right-angled Artin groups constructed in \cite{MR2443098}. In \cite{KMT}, Koberda-Mangahas-Taylor used dual van Kampen diagrams for right-angled Artin groups to study the geometry of their join-free subgroups (under the name purely loxodromic subgroups) and star-free subgroups. In Sections \ref{copy1} and \ref{copy2} of this article, we will follow the same strategy as in \cite{KMT} to study the geometry of join-free subgroups and star-free subgroups in right-angled Coxeter groups. 
\subsection{Formal definition}
We now develop dual van Kampen diagrams for right-angled Coxeter groups. The key ingredient for constructing such diagrams for RACGs which are similar to ones for RAAGs is the similarity between Davis complexes and universal covers of Salvetti complexes.

Let $\Gamma$ be a graph with the vertex set $S$. Let $w$ be a word representing the trivial element in $G_\Gamma$. A \emph{dual van Kampen diagram} $\Delta$ for $w$ in $G_\Gamma$ is an oriented disk $D$ together with a collection $\mathcal{A}$, properly embedded arcs in general position, satisfying the following:

\begin{enumerate}
\item Each arc of $\mathcal{A}$ is labeled by an element of $S$. Moreover, if two arcs of $\mathcal{A}$ intersect then the generators corresponding to their labels are adjacent in $\Gamma$.
\item With its induced orientation, $\partial D$ represents a cyclic conjugate of the word $w$ in the following manner: there is a point $*\in \partial D$ such that $w$ is spelled by starting at $*$, traversing $\partial D$ according to its orientation, and recording the labels of the arcs of $\mathcal{A}$ it encounters
\end{enumerate}

We think of the boundary of $D$ as subdivided into edges and labeled
according to the word $w$. In this way, each arc of $\mathcal{A}$ corresponds to two letters of $w$ which are represented by edges on the boundary of $D$. While not required by the definition, it is convenient to restrict our attention to \emph{tight} dual van Kampen diagrams, in which arcs of $\mathcal{A}$ intersect at most once.

In comparison with dual van Kampen diagrams for RAAGs, the only difference from dual van Kampen diagrams for RACGs is we do not need a direction equipped on each embedded arc of $\mathcal{A}$ and each edge of $\partial D$. The key reason for this difference is each edge of universal covers of Salvetti complexes is equipped with a direction while each edge of Davis complexes is not.

We now show the way to construct the dual van Kampen diagram for an identity word $w$ in a right-angled Coxeter groups. Let $\tilde{\Delta}\subset S^2$ be a (standard) van Kampen diagram for $w$, with respect to a standard presentation of $G_\Gamma$. Consider $\tilde{\Delta}^*$, the dual of $\tilde{\Delta}$ in $S^2$, and name the vertex which is dual to the face $S^2-\tilde{\Delta}$ as $v_\infty$. Then for a sufficiently small ball $B(v_\infty)$ around $v_\infty$, $\tilde{\Delta}^*-B(v_\infty)$ can be considered as a dual van Kampen diagram with a suitable choice of the labeling map. Therefore a dual van Kampen diagram exists for any word $w$ representing the trivial element in $G_\Gamma$. Conversely, a van Kampen diagram $\tilde{\Delta}$ for a word can be obtained from a dual van Kampen diagram $\Delta$ by considering the dual complex again. So, the existence of a dual van Kampen diagram for a word $w$ implies that $w$ represents the trivial element in $G_\Gamma$.
\subsection{Surgery and subwords}

Let $\Gamma$ be a graph with the vertex set $S$. Starting with a dual van Kampen diagram $\Delta$ with a disk $D$ and collection $\mathcal{A}$ of embedded arcs in $D$ for an identity word $w$. Suppose that $\gamma$ is a properly embedded arc in $\Delta$ which is either an arc of $\mathcal{A}$ or transverse to the arcs of $\mathcal{A}$. Traversing $\gamma$ in some direction and recording the labels of those arcs of $\mathcal{A}$ that cross $\gamma$ spells a word $y$ in the standard generators. We say the word $y$ is obtained from \emph{label reading} along $\gamma$ with the chosen direction.

In particular, starting with a subword $w'$ of $w$, any oriented arc 
of $D$ which begins at the initial vertex of $w'$ and ends at the terminal vertex of $w'$ produces a word $y$ via label reading such that $w'= y$ in $G_\Gamma$. To see this, we observe that the arc $\gamma$ cuts the disk $D$ into two disks $D_1$ and $D_2$, one of which (say $D_1$) determines the homotopy (and sequence of moves) to transform the word $w'$ into $y$. In other word, the disk $D_1$ along with arcs from $\mathcal{A}$ forms a dual van Kampen diagram for the word $w'\bar{y}$, and we say that this diagram is obtained via \emph{surgery} on $\Delta$. It is is straightforward that if the arc $\gamma$ is labelled by a vertex $v$ in $S$, then $w'$ represents an element in the star subgroup $G_{\St(v)}$. You can see the following lemma for a precise statement.

\begin{lem}
\label{3.1}
Suppose an arc of $\mathcal{A}$ in a dual van Kampen diagram $\Delta$ for the identity word $w$ cuts off the subword $w'$, i.e., $w \equiv svw'vt$, where $s$, w', and $t$ are subwords and $v$ is the letter at the ends of the arc. Then $w'$ represents a group element in the star subgroup $G_{\St(v)}$.
\end{lem}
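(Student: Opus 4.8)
The plan is to produce, via the label-reading/surgery principle recorded just before the lemma, a word $y$ with $w' = y$ in $G_\Gamma$ all of whose letters lie in $\St(v)$; this immediately gives $w' \in G_{\St(v)}$. The arc $\gamma$ of $\mathcal{A}$ in question is labeled $v$ and has its two endpoints on the two $v$-edges that bracket $w'$ in $\partial D$ (recall $w \equiv svw'vt$). Being a simple properly embedded arc, $\gamma$ separates $D$ into two disks; let $D_1$ be the one whose boundary contains the $w'$-portion of $\partial D$. First I would push $\gamma$ slightly into $D_1$ to obtain a parallel arc $\gamma'$ whose endpoints are the \emph{initial} and \emph{terminal} vertices of $w'$ (i.e. the points flanking $w'$, just inside the two $v$-edges). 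By the surgery discussion, label-reading along $\gamma'$ yields a word $y$ with $w' = y$ in $G_\Gamma$, so it remains only to control the labels of the arcs that $\gamma'$ meets.

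The heart of the argument is a crossing analysis. Assuming the diagram is tight, the subdisk bounded by $\gamma'$ together with the $w'$-portion of $\partial D$ has the property that an arc $\alpha \in \mathcal{A}$ crosses $\gamma'$ (necessarily exactly once) precisely when $\alpha$ has exactly one endpoint on the open $w'$-arc of $\partial D$; this is a Jordan-curve/separation observation, using that every endpoint of an arc of $\mathcal{A}$ lies on $\partial D$. I would then split into two cases for such an $\alpha \neq \gamma$. If $\alpha$ also crosses $\gamma$, then by the incidence rule for dual van Kampen diagrams (arcs cross only when their labels are adjacent in $\Gamma$) the label of $\alpha$ lies in $\Lk(v) \subset \St(v)$. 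If instead $\alpha$ crosses $\gamma'$ but not $\gamma$, then since $\gamma'$ and $\gamma$ differ only by the thin strip whose $\partial D$-portion consists solely of the two $v$-edges, the arc $\alpha$ must have an endpoint on one of those $v$-edges; hence $\alpha$ is itself labeled $v \in \St(v)$. In either case the label of every arc crossed by $\gamma'$ lies in $\St(v)$.

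Combining these, the word $y$ read along $\gamma'$ is spelled entirely in $\St(v)$ while representing the same element as $w'$, so $w'$ represents an element of the star subgroup $G_{\St(v)}$, as desired. The main obstacle I anticipate is the careful bookkeeping of crossings: justifying that $\gamma'$ crosses exactly the arcs with a single endpoint on the $w'$-arc, that the reduction to a tight diagram (arcs meeting at most once) is legitimate here, and that the only arcs distinguishing $\gamma'$ from $\gamma$ are those terminating on the bracketing $v$-edges. Once this general-position analysis is made precise, the conclusion follows directly from the surgery principle and the adjacency rule, both of which are already available.
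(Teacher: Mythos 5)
Your proof is correct and takes essentially the same route as the paper, which deduces the lemma directly from the surgery/label-reading principle together with the incidence rule that any arc crossing the $v$-labeled arc $\gamma$ must carry a label adjacent to $v$ (the paper records no more detail than calling this ``straightforward''). One simplification you could make: in a dual van Kampen diagram each boundary edge carries exactly one arc endpoint, and the two $v$-edges are already paired by $\gamma$, so no other arc ends on them; hence your second case is vacuous, and a collar push-off $\gamma'$ of $\gamma$ (extended along the arc-free halves of the $v$-edges) meets exactly the arcs crossing $\gamma$, which settles the crossing bookkeeping you flagged and in fact yields the stronger conclusion $w' \in G_{\Lk(v)}$.
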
 

If a subword in a dual van Kampen diagram has the property that no two arcs emanating from it intersect, this subword is \emph{combed} in
the dual van Kampen diagram. We remark that this such type of subword was also defined for dual van Kampen diagrams for RAAGs in \cite{KMT} and it played an important role to study some certain types of subgroups of RAAGs. In Sections \ref{copy1} and \ref{copy2} of this article, we are following the same strategy in \cite{KMT} to study subgroups of RACGs. Therefore, the property of being combed will be important in these sections. 

\begin{lem}
\label{3.2}
Suppose $w$ is a word representing the identity and $b$ is a subword of $w$, so $w$ is the concatenation of words $a$, $b$, and $c$. Let $\Delta$ be a dual van Kampen diagram for $w$.

Then there exists a word $b'$ obtained by re-arranging the letters in $b$, such that $b' = b$ and there exists a dual van Kampen diagram $\Delta'$ for $ab'c$ in which $b'$ is combed, arcs emanating from $b'$ have the same endpoint in the boundary subword $ca$ as their counterpart in $b$, and arcs that both begin and end in $ca$ are unchanged in $\Delta'$.

Furthermore, there exists a word $b''$ obtained by deleting letters in $b'$, such that $b'' = b$ and there exists a dual van Kampen diagram $\Delta''$ for $ab''c$ which is precisely $\Delta'$ without the arcs corresponding to the deleted letters.
\end{lem}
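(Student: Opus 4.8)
The plan is to prove the two assertions in turn, following the combing strategy of Koberda--Mangahas--Taylor and adapting their elementary moves to the unoriented arcs of the right-angled Coxeter setting. Throughout I read the cyclic word $w = abc$ off of $\partial D$, and I use that every letter of $w$ is an endpoint of exactly one arc of $\mathcal{A}$ (since $w$ represents the identity, all letters pair up). I call an arc \emph{emanating} from $b$ if at least one of its endpoints lies in the $b$-portion of $\partial D$; these split into \emph{through} arcs, with one endpoint in $b$ and one in $ca$, and \emph{internal} arcs, with both endpoints in $b$. The goal of the first assertion is to remove all crossings among emanating arcs while leaving every $ca$-endpoint and every arc internal to $ca$ untouched.

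For the combing I would induct on the number of crossing points between pairs of arcs that both emanate from $b$. If this number is zero, then $b$ is already combed and I take $b' = b$ and $\Delta' = \Delta$. Otherwise the key combinatorial step is to produce a \emph{removable} crossing: among all crossings between emanating arcs, choose one, together with segments of the two arcs running from a $b$-endpoint of each to the crossing point and the boundary edge of $b$ between those two endpoints; this bounds a triangular subdisk $R$, and I select the crossing so that $R$ is innermost, hence disjoint from all other arcs and containing no other $b$-endpoint between its two corners. The two corner letters $v_i, v_{i+1}$ are then adjacent in $b$, and since crossing arcs carry commuting labels they commute, so transposing them yields a rearrangement $b_1$ with $b_1 = b$ in $G_\Gamma$. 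Inside $R$ I perform the elementary homotopy that uncrosses the two segments, keeping the diagram tight. Because $R$ meets no other arc, this produces a dual van Kampen diagram for $a b_1 c$ with one fewer emanating crossing, with every $ca$-endpoint of an emanating arc preserved and every arc internal to $ca$ unchanged. Iterating and invoking the inductive hypothesis yields the desired $b'$ and $\Delta'$.

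For the deletion I would use the preceding lemma. In the combed diagram $\Delta'$ each internal arc is labeled by a single generator $v$, so its two boundary letters are both $v$; by Lemma~\ref{3.1} it cuts off a subword lying in the star subgroup $G_{\St(v)}$, so $v$ commutes with everything between its endpoints and the pair $v \cdots v$ cancels without changing the represented group element. I define $b''$ to be $b'$ with all such internal-arc letter pairs deleted. Since $b'$ is combed, the internal arcs are pairwise disjoint and disjoint from the through arcs, so erasing them together with their two boundary letters leaves a legitimate dual van Kampen diagram $\Delta''$ for $a b'' c$ that is exactly $\Delta'$ without the deleted arcs; applying the cancellation successively (nested arcs cause no trouble, since equality of group elements is preserved at each step) gives $b'' = b$ in $G_\Gamma$.

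I expect the main obstacle to be the combing step, and within it the justification that a crossing among emanating arcs can always be taken innermost so that its bounding triangle $R$ is empty and its two corner letters are \emph{adjacent} in $b$. Adjacency is exactly what makes the transposition legal (a non-adjacent swap would require commuting past the intervening letters, which is not available), and emptiness of $R$ is what guarantees that the local uncrossing neither disturbs the $ca$-endpoints and $ca$--$ca$ arcs nor creates new crossings, so that the crossing count genuinely drops. Once this innermost selection is established, termination is immediate from the finiteness of $\mathcal{A}$, and the deletion step is then routine given Lemma~\ref{3.1}.
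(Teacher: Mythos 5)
Your deletion step (the second half of the proposal, using Lemma~\ref{3.1} to cancel the pairs joined by internal arcs) is fine, but the claim on which your entire combing step rests is false: an innermost crossing between arcs emanating from $b$ need \emph{not} bound an empty triangle, and its two corner letters need \emph{not} be adjacent in $b$. Concretely, let $x,y$ be adjacent vertices of $\Gamma$, let $v$ be any other vertex, and take $w=xvvyxy$ with $a$ empty, $b=xvvy$, $c=xy$; this represents the identity since $xvvyxy=xyxy=x^2y^2=1$. The tight dual van Kampen diagram whose arcs join the two $x$'s (positions $1$ and $5$), the two $y$'s (positions $4$ and $6$), and the two $v$'s (positions $2$ and $3$) is legitimate: only the $x$--arc and the $y$--arc cross (their endpoints interleave on $\partial D$, and their labels commute). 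That crossing is the \emph{only} crossing between emanating arcs, hence vacuously innermost, yet its triangle contains the $vv$--arc and two further $b$--endpoints, and its corner letters sit at positions $1$ and $4$ of $b$. So in this example your transposition move is never available --- there is no pair of adjacent letters of $b$ with crossing arcs --- and the induction cannot advance even though $b$ is not combed. (Swapping the non-adjacent letters $x$ and $y$ is not a legal move in general; here it happens to preserve the group element only because the intervening subword $vv$ is trivial.)

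The configuration that defeats your innermost argument is exactly this one: the letters strictly between the two corners can pair up among themselves by arcs lying inside the triangle which cross nothing at all, and such arcs create no smaller crossing triangle, so minimality gives no control over them. What is true, and what a correct proof must exploit, is that these trapped arcs force the subword of $b$ strictly between the corners to represent the identity in $G_\Gamma$, and (in a tight diagram, by a parity argument) they bound a half-disk meeting no other arcs; one can therefore cut this half-disk out and reglue it elsewhere along the $b$--segment. This relocation is a rearrangement of $b$ preserving the group element (the block is trivial in $G_\Gamma$), it makes the two corner letters adjacent, and only then does your commuting transposition apply and the crossing count drop; in the example above one lands on $b'=yxvv$ or $vvyx$, which is combed, whereas neither relocation alone nor transposition alone suffices. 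Note also that the paper itself offers no argument to compare against --- it defers wholesale to Lemma 3.2 of Koberda--Mangahas--Taylor --- but any proof along your lines must include this block-relocation move (or an equivalent mechanism); as written, your induction fails at its first step.
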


The above lemma is identical to Lemma 3.2 in \cite{KMT} for RAAGs. Moreover, we observe that the proof of Lemma 3.2 in \cite{KMT} can be applied to prove the above lemma. Therefore, the reader can see the proof of Lemma 3.2 in \cite{KMT} to obtain the proof of the above lemma.

\subsection{Reducing diagrams}

In subsection 3.5 in \cite{KMT}, Koberda-Mangahas-Taylor introduce reducing diagrams and some related concepts to study words in RAAGs as well as paths in universal covers of Salvetti complexes. We observed that these concepts are also well-defined for the case of RACGs and they can also help us studying words in RACGs as well as paths in Davis complexes. Therefore, we just copy most of subsection 3.5 in \cite{KMT} and the reader can verify easily that these materials fit well for the case of RACGs. 

Let $h$ be a word in the vertex generators of $G_\Gamma$, which is not assumed to be reduced in any sense. Let $w$ denote a reduced word in the vertex generators which represents the same group element as $h$ does. Then, the word $h\bar{w}$ represents the identity in $G_\Gamma$ and so it is the boundary of some dual van Kampen diagram $\Delta$. (Here $\bar{w}$ denotes the inverse of the word $w$.) In this way, the boundary of $\Delta$ consist of two words $h$ and $\bar{w}$. We sometimes refer to a dual van Kampen diagram constructed in this way as a \emph{reducing diagram} as it represents a particular way of reducing $h$ to the reduced word $w$. For such dual van Kampen diagrams, $\partial D$ is divided into two subarcs (each a union of edges) corresponding to the words $h$ and $w$, we call these subarcs the $h$ and $w$ subarcs, respectively.

Suppose that $\Delta$ is a dual van Kampen diagram that reduces $h$ to the reduced word $w$. Since $w$ is already a reduced word, no arc of $\mathcal{A}$ can have both its endpoints on the $w$ subarc of $\partial D$. Otherwise, one could surger the diagram to produce a word equivalent to $w$ with fewer letters. Hence, each arc of $\mathcal{A}$ either has both its endpoints on the subarc of $\partial D$ corresponding to $h$, or it has one endpoint in each subarc of $\partial D$. In the former case, we call the arc (and the letters of $h$ corresponding to its endpoints) \emph{noncontributing} since these letters do not contribute to the reduced word $w$. Otherwise, the arc is called \emph{contributing} (as is
the letter of $h$ corresponding the endpoint contained in the $h$ subarc of $\partial D$). If the word $h$ is partitioned into a product of subwords $abc$, then the \emph{contribution of the subword $b$ to $w$} is the set of letters in $b$ which are contributing. We remark that whether a letter of $h$ is contributing or not is a property of the fixed dual van Kampen diagram that reduces $h$ to $w$.

\section{The geometry of subgroups of right-angled Coxeter groups}
\label{copy2}
In this section, we prove the undistortedness and freeness for star-free subgroups and the stability for join-free subgroups in right-angled Coxeter groups. Our work follows the same strategy in \cite{KMT} for proving the analogous properties in right-angled Artin groups but is based on the dual van Kampen diagrams for right-angled Coxeter groups developed in the previous section. Throughout this section, we assume that $\Gamma$ is connected and not a join.
 
\subsection{The geometry of star-free subgroups}
Recall that a nontrivial subgroup $H$ of $G_\Gamma$ is \emph{star-free} if each infinite order element in $H$ is not conjugate into a star subgroup. We now assume that $H$ is a finitely generated star-free subgroup of $G_\Gamma$ with a finite generating set $T$. Therefore, each element $h\in H$ can be expressed as a geodesic word in $H$, that is, $h=h_1h_2\cdots h_n$ such that $h_i\in T$ and $n$ is minimal. We use a dual van Kampen diagram with boundary word $(h_1h_2\cdots h_n)h^{-1}$, where $h$ and each $h_i$ are written as reduced words in $G_\Gamma$. In other words, we concatenate the reduced word representatives for the $h_i$ to obtain a word representing $h = h_1\cdots h_n$ and consider a reducing diagram for this word. With our choices fixed, we call such a reducing diagram for $h$ simply a dual van Kampen diagram for $h \in H$.

The following lemma is identical to the Lemma 4.1 in \cite{KMT} for RAAGs. Moreover, their proofs are almost identical except there is some small extra step at the end of the proof of the following lemma.

\begin{lem}
\label{4.1}
Suppose $H$ is a finitely generated, star-free subgroup of $G_\Gamma$. There exists $D = D(H)$ with the following property: If in a dual van Kampen diagram for $h\in H$, a letter in $h_i$ is connected to a letter in $h_j$ ($i<j$), then $j-i < D$.
\end{lem}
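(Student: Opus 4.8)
The plan is to argue by contradiction on the diameter of the portion of $h$ that is "tied together" by a single arc. Suppose no uniform bound $D$ exists. Then there is a sequence of elements $h \in H$, together with chosen dual van Kampen diagrams, in which a single arc of $\mathcal{A}$ connects a letter in some $h_i$ to a letter in some $h_j$ with $j-i$ arbitrarily large. First I would invoke Lemma~\ref{3.1}: an arc labeled by a vertex $v$ that cuts off a subword $w'$ forces $w'$ to represent an element of the star subgroup $G_{\St(v)}$. The idea is that the long arc separating the generators $h_i$ and $h_j$ cuts off a large middle block of the geodesic word, and that whole block must lie in a single star subgroup. Since $T$ is a fixed finite generating set and there are only finitely many vertices $v$ (hence finitely many star subgroups), a long arc produces arbitrarily long geodesic subwords $h_{i+1}\cdots h_{j-1}$ of elements of $H$ all landing inside star subgroups.

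\textbf{Extracting a star element.} The key step is to promote "a long subword lies in a star subgroup" to "an infinite-order element of $H$ is conjugate into a star subgroup," contradicting star-freeness. Because $H$ is finitely generated over the finite set $T$ and the subword $h_{i+1}\cdots h_{j-1}$ is a geodesic word in $H$ of unbounded length but with bounded generating set, as $j-i \to \infty$ the number of such subwords landing in a fixed star subgroup $G_{\St(v)}$ grows without bound. I would then apply a pigeonhole argument: among arbitrarily long geodesic $H$-words mapping into the fixed subgroup $H \cap G_{\St(v)}$, some prefix-pair repeats, producing a nontrivial cyclically reduced element $u \in H$ (in the $T$-metric) whose conjugates stay inside a coset of $G_{\St(v)}$. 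More precisely, once the geodesic subword is long enough relative to $|T|$ and the ball geometry, one locates two positions giving the same partial product in the star subgroup, so that a subword represents a nontrivial element of $H$ lying (up to conjugation by the prefix) inside $G_{\St(v)}$. Since that element has infinite order whenever the subword is long enough to be nontrivial and non-torsion, this contradicts $H$ being star-free.

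\textbf{The extra step beyond the RAAG case.} The authors signal that the proof differs from Lemma~4.1 of \cite{KMT} only by "some small extra step at the end," and I expect this is exactly the torsion issue peculiar to right-angled Coxeter groups. In a RAAG every nontrivial element already has infinite order, so producing any nontrivial element of $H$ inside a star subgroup immediately contradicts purely-loxodromic-ness. In a RACG, however, the element $u$ extracted by pigeonhole could a priori be torsion, so I would need to rule this out: a nontrivial \emph{cyclically reduced} word of large support cannot represent a torsion element, since torsion elements of $G_\Gamma$ have cliques as supports and thus bounded length. Concretely, I would use that $|u|_S = c\,|u|_T$ for geodesic $H$-words (the undistortion-type estimate established via combing in Lemma~\ref{3.2} and the star-free structure), so that a long $T$-geodesic word forces a long reduced word in $G_\Gamma$, which cannot be torsion. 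The main obstacle, then, is carefully choosing the threshold $D$ so that the cut-off subword is simultaneously long enough to force repetition (pigeonhole over $H \cap G_{\St(v)}$ and over the finitely many vertices $v$) and long enough to guarantee the resulting element is non-torsion and of infinite order; once that threshold is fixed, star-freeness is directly contradicted and the bound $D = D(H)$ follows.
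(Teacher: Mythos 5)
Your setup is the paper's: argue by contradiction, apply Lemma~\ref{3.1} to write $h_{i(t)}\cdots h_{j(t)}=\sigma_t M_t\tau_t$ with $M_t\in G_{\St(v_t)}$, pigeonhole over the finitely many choices of prefix $\sigma$, suffix $\tau$ and vertex $v$, and take differences to obtain infinitely many distinct nontrivial elements $k_t=\sigma M_tM_1^{-1}\sigma^{-1}$ of $H\cap\sigma G_{\St(v)}\sigma^{-1}$. You also correctly guessed that the ``small extra step'' concerns torsion, which does not arise in the RAAG case. But your resolution of that step has a genuine gap: it is circular. You propose to show the extracted element has infinite order by arguing that a long $T$-geodesic word forces a long (cyclically) reduced $S$-word, citing ``the undistortion-type estimate'' $\abs{u}_S=c\,\abs{u}_T$. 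No such estimate is available at this point: undistortion of star-free subgroups is Proposition~\ref{4.4}, which is proved from Lemma~\ref{4.3}, which rests on Lemma~\ref{4.2}, which rests on exactly the lemma you are proving; Lemma~\ref{3.2} is only a diagram-combing statement and gives no length comparison. Without it, the difference element $k_t$ is long in the $T$-metric but could a priori be short in the $S$-metric, and even a long $S$-word can be torsion unless it is cyclically reduced \emph{in $G_\Gamma$} (e.g.\ $ws w^{-1}$), a property that being cyclically reduced in the $T$-metric does not supply.

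The paper's extra step avoids all length considerations and is purely algebraic: if some $k_{t_0}$ has infinite order, star-freeness is contradicted at once; otherwise every $k_t$ has order two (any torsion element of a RACG is conjugate into a finite special subgroup, which is an elementary abelian $2$-group). Since the $k_t$ are infinitely many distinct involutions, two of them, say $k_{t_1}$ and $k_{t_2}$, fail to commute (otherwise they would generate an infinite elementary abelian $2$-subgroup, impossible in a virtually torsion-free group), and then $k_{t_1}k_{t_2}$ has order greater than two, hence infinite order, and lies in $H\cap\sigma G_{\St(v)}\sigma^{-1}$ — the desired contradiction. If you replace your length-based torsion argument with this involution dichotomy, your proof becomes the paper's proof.
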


\begin{proof}
Suppose in a dual van Kampen diagram for $h \in H$, a letter $g$ in $h_i$ is connected to another letter $g$ in $h_j$. By Lemma \ref{3.1}, $h_i\cdots hj = \sigma M \tau$, where $M$ is in the star of $g$, and $\sigma$, $\tau$ are a prefix of $h_i$ and suffix of $h_j$ respectively. Therefore, if the lemma is false, there a sequence of reduced-in-$H$ words \[h^{(t)}_{i(t)}\cdots h^{(t)}_{j(t)}=\sigma_t M_t \tau_t\]
as above, with $j(t)-i(t)$ strictly increasing. Because $\Gamma$ is finite and $H$ is finitely generated, we may pass to a subsequence so that the $M_t$ are in the star of the same generator $v$, and furthermore we have constant $\sigma_t = \sigma$ and $\tau_t = \tau$, while $M_t\neq M_s$ for $s\neq t$. Therefore, for each $t\geq 2$, element \[k_t=\bigl(h^{(t)}_{i(t)}\cdots h^{(t)}_{j(t)}\bigr)\bigl(h^{(1)}_{i(1)}\cdots h^{(1)}_{j(1)}\bigr)^{-1}=\sigma M_t M_1^{-1} \sigma^{-1}\] is nontrivial element in the subgroup $H\cap \sigma G_{\St(v)}\sigma^{-1}$. Moreover, $k_t\neq k_s$ for any $2\leq t<s$.

Assume that $k_{t_0}$ is infinite order for some $t_0\geq 2$. Then, $H$ is not a star-free subgroup which is a contradiction. We now assume that the order of all $k_t$ are two. Since $k_t\neq k_s$ for any $2\leq t<s$, we can choose two different elements $k_{t_1}$ and $k_{t_2}$ which do not commute. Therefore, the order of the group element $k_{t_1}k_{t_2}$ is not two. This implies that $k_{t_1}k_{t_2}$ is an infinite order element in the subgroup $H\cap \sigma G_{\St(v)}\sigma^{-1}$. Then, $H$ is not a star-free subgroup which is a contradiction.
\end{proof}

The following lemma is identical to the Lemma 4.2 in \cite{KMT} for RAAGs. Moreover, the proof of the below lemma almost follows the same line argument as in the proof of Lemma 4.2 in \cite{KMT}. Here we only need to replace Lemmas 3.2, 4.1 in the proof of Lemma 4.2 in \cite{KMT} by Lemmas \ref{3.2}, \ref{4.1} of this paper respectively to obtain the proof of the following lemma.

\begin{lem}
\label{4.2}
Suppose $H$ is a finitely generated, star-free subgroup of $G_\Gamma$ and $D$ is a constant as in Lemma \ref{4.1}. Let $h_i\cdots h_j$ be a subword of $h = h_1 \cdots h_n$ reduced in $H$ as above. Then the element $h_i \cdots h_j \in G_\Gamma$ may be written as a concatenation of three words $\sigma W \tau$, where the letters occurring in $\sigma$ are a subset of the letters occurring in $h_{i-D}\cdots h_{i-1}$ when $i>D$, and in $h_1\cdots h_{i-1}$ otherwise; the letters occurring in $\tau$ are a subset of the letters occurring in $h_{j+1}\cdots h_{j+D}$ when $j \leq n-D$ and in $h_{j+1}\cdots h_n$ otherwise; and the letters occurring in $W$ are exactly the letters occurring in $h_i \cdots h_j$ which survive in the word $h$ after it is reduced in $G_\Gamma$.
\end{lem}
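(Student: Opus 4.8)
The plan is to work in a fixed dual van Kampen diagram $\Delta$ for $h \in H$ and to track which letters of the subword $h_i \cdots h_j$ are \emph{contributing} (survive into the reduced word for $h$) and which are \emph{noncontributing} (cancel within $\Delta$), in the terminology of Section~\ref{copy1}. First I would apply Lemma~\ref{3.2} to the subword $b = h_i \cdots h_j$, re-arranging its letters into a word $b'$ in which $b'$ is combed, so that no two arcs emanating from $b'$ intersect and each arc emanating from $b'$ keeps the same endpoint on the complementary boundary subword as in $\Delta$. The combed picture lets me read off, from left to right along $b'$, a factorization in which the noncontributing letters pair up into arcs with both endpoints on the $h$-subarc, while the contributing letters connect across to the $w$-subarc. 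The letters of $b'$ that survive to $h$ after reduction are, by definition, exactly the contributing ones, and these will assemble into the middle word $W$; the noncontributing letters split off to the two sides and will become $\sigma$ and $\tau$.

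The key geometric input is Lemma~\ref{4.1}: there is a constant $D = D(H)$ so that any arc of $\mathcal{A}$ joining a letter of $h_i$ to a letter of $h_j$ has $|j - i| < D$. Using this I would argue that every noncontributing arc with both endpoints inside $b = h_i \cdots h_j$ spans fewer than $D$ of the syllable-blocks $h_\ell$, so such arcs are internal to $b$ and contribute nothing to $\sigma$ or $\tau$. The only noncontributing letters of $b$ that get pushed to the sides are those paired by an arc to a letter \emph{outside} $b$, i.e.\ to some $h_\ell$ with $\ell < i$ or $\ell > j$; again by Lemma~\ref{4.1}, such a letter on the left is connected within distance $D$, so its partner lies in $h_{i-D}\cdots h_{i-1}$ (or in $h_1 \cdots h_{i-1}$ when $i \le D$), and symmetrically on the right. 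Since an arc is labelled by a single generator and its two endpoints carry the \emph{same} generator, the letter that lands in $\sigma$ is a letter already occurring in $h_{i-D}\cdots h_{i-1}$ (resp.\ $h_1 \cdots h_{i-1}$), and likewise for $\tau$ using $h_{j+1}\cdots h_{j+D}$ (resp.\ $h_{j+1}\cdots h_n$). This is exactly the support constraint demanded on $\sigma$ and $\tau$ in the statement.

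Finally, I would assemble the equation $h_i \cdots h_j = \sigma W \tau$ in $G_\Gamma$ by reading the combed diagram $\Delta'$ and performing the surgeries of Lemma~\ref{3.1} that realise the cancellations: sliding the noncontributing pairs with partners to the left out as the prefix $\sigma$, sliding those with partners to the right out as the suffix $\tau$, and leaving the contributing letters, whose supports are precisely the letters of $h_i \cdots h_j$ surviving in the reduced $h$, as the middle word $W$. The main obstacle I anticipate is the careful bookkeeping that the combing of Lemma~\ref{3.2} together with the distance bound of Lemma~\ref{4.1} genuinely forces every side letter to have its partner within the prescribed window, and that the rearrangement producing $b'$ does not silently move a contributing letter across the cut; keeping the contributing/noncontributing classification stable under the combing operation is where the real work lies, and it is precisely here that the argument parallels the proof of Lemma~4.2 in~\cite{KMT}, so I would follow that line verbatim with Lemmas~\ref{3.2} and~\ref{4.1} substituted for their RAAG counterparts.
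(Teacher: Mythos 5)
Your proposal is correct and takes essentially the same route as the paper: the paper gives no independent argument for this lemma, but instead invokes the proof of Lemma 4.2 in \cite{KMT} verbatim with Lemmas \ref{3.2} and \ref{4.1} substituted for their RAAG counterparts, which is precisely the combing-plus-window argument you reconstruct (comb $b$ via Lemma \ref{3.2}, use the constant $D$ of Lemma \ref{4.1} to confine the external partners of noncontributing letters to the adjacent $D$ blocks, delete internal cancelling pairs, and read off $\sigma W \tau$). Your RACG-specific observation that both endpoints of an arc carry the \emph{same} generator, so the letters of $\sigma$ and $\tau$ literally occur among the letters of the neighboring blocks, is exactly the point that makes the transfer from the RAAG setting work.
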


The following lemma is a key lemma to prove that the star-free subgroup $H$ is undistorted in $G_\Gamma$. This lemma is identical to the Lemma 4.3 in \cite{KMT} for RAAGs and its proof again almost follows the same line argument as in the proof of Lemma 4.3 in \cite{KMT}. Here we only need to replace Lemma 4.2 in the proof of Lemma 4.3 in \cite{KMT} by Lemma \ref{4.2} of this paper to obtain the proof of the following lemma.

\begin{lem}
\label{4.3}
Given $H$ a finitely generated, star-free subgroup of $G_\Gamma$, there exists $K = K(H)$ such that, if $h_i \cdots hj$ is a subword of a reduced word for $h$ in $H$ which contributes nothing to the reduced word for $h$ in $G_\Gamma$, then $j - i < K$.
\end{lem}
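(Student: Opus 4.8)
\medskip

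The plan is to argue by contradiction, exploiting the finiteness of both $\Gamma$ and the generating set $T$ of $H$ together with the fact that $H$ is star-free. First I would suppose the statement fails: then there is a sequence of subwords $h_{i(t)}^{(t)}\cdots h_{j(t)}^{(t)}$, each reduced in $H$ and each contributing nothing to the reduced word for the corresponding $h^{(t)}\in H$ in $G_\Gamma$, with $j(t)-i(t)\to\infty$. The key structural tool is Lemma~\ref{4.2}: applying it to each such subword writes $h_{i(t)}^{(t)}\cdots h_{j(t)}^{(t)} = \sigma_t W_t \tau_t$, where the contribution $W_t$ is empty by hypothesis, so in fact $h_{i(t)}^{(t)}\cdots h_{j(t)}^{(t)} = \sigma_t\tau_t$ in $G_\Gamma$, with the letters of $\sigma_t$ drawn from the bounded window $h_{i(t)-D}^{(t)}\cdots h_{i(t)-1}^{(t)}$ and the letters of $\tau_t$ from $h_{j(t)+1}^{(t)}\cdots h_{j(t)+D}^{(t)}$ (with the obvious truncations near the ends).

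\medskip

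The point of this is that $\sigma_t$ and $\tau_t$ each involve at most $D$ of the generators $h_k^{(t)}\in T$, so each is a word of boundedly many letters over a finite alphabet $S$; hence there are only finitely many possibilities for the pair $(\sigma_t,\tau_t)$ as group elements of $G_\Gamma$. Passing to an infinite subsequence I may therefore assume $\sigma_t=\sigma$ and $\tau_t=\tau$ are constant. Now the group elements $g_t := h_{i(t)}^{(t)}\cdots h_{j(t)}^{(t)} = \sigma\tau$ are all \emph{equal} in $G_\Gamma$, even though the underlying reduced-in-$H$ words have strictly increasing length $j(t)-i(t)+1$. Since these subwords are reduced in $H$, distinct lengths force the $g_t$ to be distinct elements of $H$ — but they coincide in $G_\Gamma$, so the inclusion $H\hookrightarrow G_\Gamma$ is not injective on this set, contradicting that $H$ is a subgroup mapping injectively. (Concretely: fix $t_1<t_2$; then $\big(h_{i(t_1)}^{(t_1)}\cdots h_{j(t_1)}^{(t_1)}\big)\big(h_{i(t_2)}^{(t_2)}\cdots h_{j(t_2)}^{(t_2)}\big)^{-1}=\sigma\tau\tau^{-1}\sigma^{-1}=e$ in $G_\Gamma$, yet the corresponding word in $H$ is reduced and nontrivial, which is absurd.)

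\medskip

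The main obstacle I anticipate is the bookkeeping needed to extract a genuine contradiction with \emph{reducedness in $H$} rather than merely star-freeness. In Lemma~\ref{4.1} the contradiction came directly from producing an infinite-order element of $H\cap\sigma G_{\St(v)}\sigma^{-1}$; here the subwords contribute nothing, so the relevant element is trivial in $G_\Gamma$ and the contradiction must instead come from the tension between the word being geodesic (reduced) in $(H,T)$ and collapsing to a bounded-length element of $G_\Gamma$. I would take care to verify that the two subwords, when concatenated with one inverted, genuinely form a word reduced in $H$ — or else reduce directly to the observation that a reduced-in-$H$ word of length $\ell$ represents an element whose $G_\Gamma$-length is at least some linear function of $\ell$ once no cancellation-by-conjugation into stars occurs, which is exactly what undistortedness (the goal these lemmas build toward) will later assert. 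The cleanest route is the equal-elements argument above, where constancy of $(\sigma,\tau)$ along the subsequence forces infinitely many distinct reduced-in-$H$ words to name a single group element, contradicting uniqueness of geodesic length.
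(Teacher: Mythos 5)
Your argument is correct and is essentially the paper's own proof (the paper simply defers to the proof of Lemma 4.3 in \cite{KMT}, with Lemma \ref{4.2} substituted for its RAAG counterpart): in both, the hypothesis of empty contribution turns Lemma \ref{4.2} into $h_i\cdots h_j=\sigma\tau$ with $\sigma,\tau$ built from $D$-sized windows of generators of $H$, so this element has uniformly bounded $S$-length in $G_\Gamma$, which local finiteness makes incompatible with its being represented by arbitrarily long geodesic words in $(H,T)$. The paper's version is a direct bound ($j-i+1$ is at most the maximal $T$-length over a finite ball of $G_\Gamma$) rather than your pigeonhole/subsequence contradiction, but that difference is cosmetic; the only blemish is your parenthetical claim that $g_{t_1}g_{t_2}^{-1}$ is represented by a word ``reduced in $H$,'' which is unjustified but also unneeded, since distinct $T$-lengths already force $g_{t_1}\neq g_{t_2}$ in $H$.
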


The following proposition is a direct result of Lemma \ref{4.3}.
\begin{prop}
\label{4.4}
Finitely generated star-free subgroups are undistorted.
\end{prop}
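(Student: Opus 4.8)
The plan is to show that the inclusion $H \hookrightarrow G_\Gamma$ is a quasi-isometric embedding. Fix the finite generating set $T$ of $H$ and let $S$ be the vertex set of $\Gamma$. One inequality is immediate: if $C = \max_{t \in T} \abs{t}_S$, then writing any $h \in H$ as a word of length $\abs{h}_T$ in $T$ and replacing each generator by its reduced $S$-word gives $\abs{h}_S \le C\abs{h}_T$. So I only need a linear lower bound of the form $\abs{h}_T \le A\abs{h}_S + B$, and essentially all the content is to extract this from Lemma~\ref{4.3}.

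Take any $h \in H$ and write $h = h_1 \cdots h_n$ as a geodesic word in $(H,T)$, so $n = \abs{h}_T$. Form the dual van Kampen diagram for $h \in H$ exactly as set up before Lemma~\ref{4.1}: concatenate the reduced $S$-representatives of the $h_i$ and reduce this to the reduced word $w$ for $h$ in $G_\Gamma$, so that $\abs{w} = \abs{h}_S$. Call a generator $h_i$ \emph{contributing} if at least one of the letters of its $S$-representative survives in $w$ (i.e.\ its corresponding arc is contributing in the sense of Section~\ref{copy1}), and \emph{non-contributing} otherwise. Since each surviving letter of $w$ lies in the $S$-representative of exactly one $h_i$, the number $m$ of contributing generators satisfies $m \le \abs{w} = \abs{h}_S$.

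Now count. Deleting the $m$ contributing generators from the sequence $h_1, \dots, h_n$ leaves at most $m+1$ maximal runs of consecutive non-contributing generators. Each such run is a subword $h_i \cdots h_j$ that contributes nothing to $w$, so Lemma~\ref{4.3} gives $j - i < K$, i.e.\ the run contains at most $K$ generators. Hence the total number of non-contributing generators is at most $(m+1)K$, and
\[
n = m + (\text{number of non-contributing generators}) \le m + (m+1)K \le (K+1)\abs{h}_S + K .
\]
Combining this with $\abs{h}_S \le C\abs{h}_T$ yields the two-sided estimate witnessing that the inclusion is a quasi-isometric embedding, so $H$ is undistorted.

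Since Lemma~\ref{4.3} already carries out the essential work, there is no serious obstacle here; the argument is a bookkeeping count. The only point needing a moment's care is the classification of a single generator $h_i$, which may have some letters surviving and some not: I classify it as contributing whenever \emph{any} of its letters survives, which is precisely what makes each maximal block of entirely non-contributing generators a subword contributing nothing and hence eligible for Lemma~\ref{4.3}.
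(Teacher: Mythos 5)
Your proof is correct and follows exactly the route the paper intends: the paper gives no explicit argument, stating only that Proposition~\ref{4.4} is a direct result of Lemma~\ref{4.3} (following the corresponding step in \cite{KMT}), and your counting argument---bounding the number of contributing generators by $\abs{h}_S$ and the length of each maximal non-contributing run by $K$ via Lemma~\ref{4.3}---is precisely the bookkeeping that deduction requires.
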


The proof of the following proposition is almost identical to the proof of Theorem 53 in \cite{MR3192368}. We recall a proof with a slight modification for the convenience of the reader.

\begin{prop}
\label{vfree}
Star-free subgroups are virtually free.
\end{prop}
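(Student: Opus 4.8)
The plan is to show that a star-free subgroup $H$ of $G_\Gamma$ is virtually free by realizing it as a group acting on a tree with finite vertex stabilizers, or equivalently by bounding the torsion and showing $H$ has a free subgroup of finite index via the theory of groups acting on $\mathrm{CAT}(0)$ cube complexes. Concretely, I would exploit the fact established just above (Proposition~\ref{4.4}) that $H$ is undistorted, together with the star-free hypothesis, to argue that $H$ cannot contain a $\mathbb{Z}^2$ and more generally that every infinite order element is ``rank-one''-like. First I would recall that $G_\Gamma$ acts properly and cocompactly on the Davis complex $\Sigma_\Gamma$, a $\mathrm{CAT}(0)$ cube complex, so $H$ inherits a proper action by semisimple isometries; since $H$ is undistorted, the orbit map $H \to \Sigma_\Gamma$ is a quasi-isometric embedding.

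The key steps, in order, would be the following. First, I would show that no infinite order element of $H$ translates along a geodesic that stays boundedly close to a flat or a product region: this is exactly what star-freeness buys us, since an infinite order element whose axis lies in (a coset of) a star subgroup $G_{\St(v)}$ would contradict the hypothesis, and star subgroups are precisely the ``product-like'' pieces ($\St(v)$ being a join and hence $G_{\St(v)}$ splitting as a direct product with the $\langle v\rangle$ factor). Second, I would promote this to the statement that $H$ contains no copy of $\mathbb{Z}^2$: any such subgroup would yield a flat in $\Sigma_\Gamma$, and a cocompact flat in a Davis complex forces an infinite order element to be conjugate into a join, indeed into a star subgroup, again contradicting star-freeness. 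Third, having ruled out $\mathbb{Z}^2$, I would invoke the structure theory of undistorted (hence quasiconvex) subgroups acting on $\mathrm{CAT}(0)$ cube complexes together with the fact that a finitely generated group acting properly and cocompactly with no $\mathbb{Z}^2$ on a $\mathrm{CAT}(0)$ cube complex is hyperbolic, and a finitely generated hyperbolic group acting properly and cocompactly on a $\mathrm{CAT}(0)$ cube complex of dimension governed by the absence of higher flats is virtually free once its boundary (or its cube complex) is one-dimensional.

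The cleanest route, which I would follow to finish, is to produce an explicit action of $H$ on a tree. Because $H$ is star-free and undistorted, I would argue that the ``essential'' cubical dimension seen by $H$ is one: every infinite order element acts as a rank-one isometry along an axis that is not contained in any positive-dimensional hyperplane carrier coming from a star, so the cubical convex hull of an $H$-orbit is quasi-isometric to a tree. Passing to a torsion-free finite-index subgroup $H_0$ of $H$ (which exists since $H$ embeds in the virtually torsion-free group $G_\Gamma$), the action of $H_0$ on this tree is free, whence $H_0$ is free and $H$ is virtually free.

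The hard part will be the second and third steps, namely rigorously excluding $\mathbb{Z}^2$ and then extracting the tree action: one must verify that the only source of flats in the Davis complex relevant to an undistorted subgroup is conjugation into a join (equivalently a star) subgroup, and that ruling these out forces one-dimensional behaviour. I expect the analogue of Theorem~53 in \cite{MR3192368} to carry over essentially verbatim—the star-free hypothesis in $G_\Gamma$ plays exactly the role that the purely-loxodromic hypothesis plays for right-angled Artin groups in \cite{KMT}—so after establishing undistortedness in Proposition~\ref{4.4} the main work is checking that each geometric input (semisimplicity of the action, the correspondence between flats and join subgroups, and the passage to a torsion-free finite-index subgroup) is valid for Davis complexes, after which the proof of virtual freeness follows the cited argument.
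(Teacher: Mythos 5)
Your proposal has a genuine gap, and it sits exactly where you flagged the ``hard part'': reducing star-freeness to ``no $\Z^2$'' (and, in your final paragraph, to ``every infinite-order element is rank-one'') discards precisely the strength of the hypothesis that the proof needs, and the remaining chain cannot be completed. Concretely, let $\Gamma$ be a $5$--cycle, so that $G_\Gamma$ is the hyperbolic right-angled pentagon group. Then no subgroup of $G_\Gamma$ contains $\Z^2$, every infinite-order element is rank-one, and a torsion-free finite-index subgroup $H_0$ acts properly, cocompactly, and undistortedly on $\Sigma_\Gamma$ --- yet $H_0$ is a closed hyperbolic surface group, which is not virtually free. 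Of course $H_0$ is not star-free (it meets each infinite star subgroup $G_{\St(v)}\cong \Z_2\times D_\infty$ in a finite-index, hence infinite, subgroup), but your second and third steps use only the absence of $\Z^2$, rank-one-ness, properness, and undistortedness, all of which $H_0$ enjoys; so those steps cannot yield virtual freeness no matter how they are completed. Two further problems: the flat plane theorem you invoke requires a cocompact action, which $H$ does not have on $\Sigma_\Gamma$ (undistortedness is not a substitute), and the claim that the cubical convex hull of an $H$--orbit is quasi-isometric to a tree is never derived from anything --- it is essentially a restatement of the conclusion.

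The paper's proof is entirely different and much more elementary: it is an induction on the number of vertices of $\Gamma$ using the amalgam $G_\Gamma = G_{\St(v)} *_{G_{\Lk(v)}} G_{\Gamma_v}$. If $H$ is torsion-free and star-free, then $H$ meets every conjugate of $G_{\St(v)}$ --- and hence every edge group, these being conjugates of $G_{\Lk(v)}$ --- trivially, since any nontrivial element of such an intersection would be an infinite-order element conjugate into a star subgroup. Thus $H$ acts on the Bass--Serre tree with trivial edge stabilizers, so it is a free product of subgroups conjugate into $G_{\Gamma_v}$ together with a possible free factor, and induction on the vertex count gives freeness; general $H$ is then handled by intersecting with a torsion-free finite-index subgroup of $G_\Gamma$. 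Note that this argument needs no finite generation (the proposition is stated for arbitrary star-free subgroups, whereas your route leans on Proposition~\ref{4.4} and hence on finite generation) and no analysis of flats. Your closing citation of Theorem 53 in \cite{MR3192368} is indeed the argument the paper copies, but its content is this Bass--Serre induction, not the package of geometric verifications (semisimplicity, the flats-versus-joins correspondence) that you list as the remaining work.
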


\begin{proof}

We first assume that $H$ is torsion free. We will prove that $H$ is a free subgroup by induction on the number of vertices of $\Gamma$. Since $H$ is a torsion free star-free subgroup, for each vertex $v$ of $\Gamma$ and $g$ in $G_\Gamma$ \[H\cap gG_{\St(v)}g^{-1}=\{1\}.\]

For the base case $\Gamma=v$, $G_\Gamma=\Z_2$ and $H=\{1\}$. Therefore, the result in this case is obvious. For the inductive step, choose a vertex $v$ of $\Gamma$ and let $\Gamma_v$ be the induced subgraph of $\Gamma$ generated by all vertices of $\Gamma$ except $v$. We observe that $G_\Gamma=G_{\St(v)}*_{G_{\Lk(v)}} G_{\Gamma_v}$. By standard Bass-Serre Theory, we see that $H$ acts on the corresponding Bass-Serre tree with trivial edge stabilizer. Therefore, there exists a (possibly infinite) collection of subgroups $\{H_i\}$ with each $H_i$ conjugate to $G_{\Gamma_v}$ in $G_\Gamma$ such that $H$ is a free product of subgroups $H_i$ with possibly an additional free
factor. Since $H_i$ is conjugate into $G_{\Gamma_v}$ and $\Gamma_v$ has fewer vertices than $\Gamma$, we see that $H$ is free by induction.

We now assume that $H$ is not torsion free. Let $G_1$ be a finite-index torsion free subgroup of $G_\Gamma$ and $H_1=G_1\cap H$ (see \cite{MR1709956} for a construction of group $G_1$). Then, $H_1$ is a torsion free star-free subgroup of $H$ with a finite index in $H$. Also, $H_1$ is a free subgroup by the above argument. This implies that $H$ is a virtually free subgroup.
\end{proof}

\subsection{Geometric embedding properties of join-free subgroups}
Assume the graph $\Gamma$ is a non-join connected graph. A nontrivial subgroup $H$ of $G_\Gamma$ is \emph{$N$--join-busting} if, for any reduced word $w$ representing $h$ in $H$, and any join subword $\beta\leq w$, the length of $\beta$ is bounded above by $N$.

By using almost the same line argument as in Section 5 in \cite{KMT}, we obtained the Proposition \ref{prop2} as below. We remark that the Proposition \ref{prop2} is identical to the Theorem 5.2 in \cite{KMT}. However, we need to use van Kampen diagrams for RACGs instead of van Kampen for RAAGs in \cite{KMT}. We also use Lemmas \ref{3.2}, \ref{4.1}, and \ref{4.2} of this paper instead of Lemmas 3.2, 4.1, and 4.2 in \cite{KMT} respectively.

\begin{prop}
\label{prop2}
Let $\Gamma$ be a non-join connected graph and $H$ a finitely generated join-free subgroup of the right-angled Coxeter group $G_\Gamma$. There exists an $N = N(H)$ such that $H$ is $N$--join-busting.
\end{prop}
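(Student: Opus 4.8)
The plan is to follow the strategy of Theorem 5.2 in \cite{KMT}, adapting their argument for right-angled Artin groups to the right-angled Coxeter setting using the dual van Kampen machinery developed in Section~\ref{copy1}. The statement to prove is that a finitely generated join-free subgroup $H$ is \emph{$N$--join-busting} for some $N = N(H)$: no reduced word representing an element of $H$ can contain an arbitrarily long join subword. I would argue by contradiction. Suppose no such $N$ exists. Then for each $t$ there is an element $h^{(t)} \in H$ with a reduced word containing a join subword $\beta_t$ whose length grows without bound. Write each $h^{(t)}$ as a word $h_1^{(t)} \cdots h_{n(t)}^{(t)}$ that is reduced in $H$ (with respect to the finite generating set $T$), so that $\beta_t$ occupies some block of consecutive $H$-syllables.

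The key structural input is Lemma~\ref{4.2}, which controls how the letters of a subword $h_i \cdots h_j$ survive into the reduced word for $h$ in $G_\Gamma$: the surviving letters are bracketed by prefixes $\sigma$ and suffixes $\tau$ drawn from a bounded window $h_{i-D}\cdots h_{i-1}$ and $h_{j+1}\cdots h_{j+D}$ on either side. Since the join subword $\beta_t$ is a subword of the \emph{reduced} word for $h^{(t)}$ in $G_\Gamma$, its letters are exactly surviving letters, so Lemma~\ref{4.2} lets me locate the $H$-syllables of $h^{(t)}$ responsible for producing $\beta_t$. As $|\beta_t| \to \infty$, the number of such syllables is forced to grow, and I can find a long run of consecutive $H$-syllables that project (up to the bounded $\sigma$, $\tau$ corrections) entirely into a single join subgroup $G_J$ with $J = J_1 * J_2$ an induced join subgraph. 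The next step is a pigeonhole/subsequence argument: because $\Gamma$ is finite there are only finitely many induced join subgraphs, so I may pass to a subsequence for which the join subgraph $J$ is constant, and after conjugating by the bounded-length prefix I arrange that a growing central block of each $h^{(t)}$ lands in a fixed conjugate $\sigma G_J \sigma^{-1}$.

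From here I extract an infinite-order element lying in a conjugate of a join subgroup, contradicting join-freeness. Concretely, by taking products of the form $k_t = (\text{central block of } h^{(t)})(\text{central block of } h^{(s)})^{-1}$ as in the proof of Lemma~\ref{4.1}, I obtain infinitely many distinct elements of $H \cap \sigma G_J \sigma^{-1}$. If one of these has infinite order, it is conjugate into the join subgroup $G_J$ and we are done. If all of them are torsion (order two, as RACG torsion is generated by such involutions), I use the same trick as in Lemma~\ref{4.1}: two noncommuting involutions in $H \cap \sigma G_J \sigma^{-1}$ multiply to an infinite-order element of that same intersection, again contradicting that $H$ is join-free. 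The main obstacle I anticipate is the bookkeeping in the second paragraph: making precise that the long surviving run really does lie in a single join subgroup rather than merely in a union of bounded pieces, and controlling the effect of the $\sigma$, $\tau$ corrections so that after passing to a subsequence the central blocks genuinely sit in one fixed conjugate of $G_J$. This is exactly the place where the finiteness of $\Gamma$ and the uniform bound $D$ from Lemma~\ref{4.1} must be leveraged carefully, and it is the RACG analogue of the corresponding delicate step in \cite{KMT}.
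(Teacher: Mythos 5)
Your proposal takes essentially the same route as the paper: the paper establishes Proposition~\ref{prop2} precisely by running the argument of Theorem 5.2 (Section 5) of \cite{KMT}, substituting the RACG dual van Kampen diagrams of Section~\ref{copy1} and Lemmas~\ref{3.2}, \ref{4.1}, \ref{4.2} for their RAAG counterparts, which is exactly the contradiction/pigeonhole scheme you outline. Your torsion-handling step (two noncommuting involutions in $H \cap \sigma G_J \sigma^{-1}$ multiplying to an infinite-order element) is likewise the same device the paper uses, in its proof of Lemma~\ref{4.1}, to adapt the torsion-free RAAG argument to right-angled Coxeter groups.
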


In the Proposition \ref{prop3} as below, we prove the stability of $N$--join-busting subgroups in RACGs. This proposition is identical to Corollary 6.2 in \cite{KMT}. The proof of Proposition \ref{prop3} follows almost the same line argument as in Section 6 in \cite{KMT}. However, we need to use van Kampen diagrams for RACGs instead of van Kampen for RAAGs in and we use Proposition \ref{4.4} in this paper instead of Proposition 4.4 in \cite{KMT}. 

\begin{prop}
\label{prop3}
Let $\Gamma$ be a non-join connected graph and $H$ a finitely generated join-free subgroup of the right-angled Coxeter group $G_\Gamma$. If $H$ is $N$--join-busting for some $N$, then $H$ is stable in $G_\Gamma$. %In particular, $H$ is combinatorially quasiconvex in $G_\Gamma$.
\end{prop}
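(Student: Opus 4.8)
The plan is to leverage the structural results of the previous two sections together with the dual van Kampen machinery of Sections~\ref{copy1} and \ref{copy2}, following the template of Section 6 of \cite{KMT}. First I would set up a clean reduction. Since $H$ is join-free it is in particular star-free, so Proposition~\ref{4.4} shows that $H$ is undistorted in $G_\Gamma$, and Proposition~\ref{vfree} shows that $H$ is virtually free and hence word-hyperbolic. By the characterization recorded just after the definition of stability (a subgroup is stable if and only if it is strongly quasiconvex and hyperbolic, see \cite{Tran2017}), it then suffices to prove that $H$ is strongly quasiconvex: for every $K\geq 1$ and $C\geq 0$ there is $M=M(K,C)$ so that every $(K,C)$--quasi-geodesic in $G_\Gamma$ with endpoints on $H$ lies in $\mathcal{N}_M(H)$. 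This trades the Hausdorff-distance clause of stability for a single neighborhood estimate, while the genuinely geometric content is isolated in the strong quasiconvexity statement.

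To produce that estimate I would fix $K,C$ and, after left-translating by an element of $H$, assume a given $(K,C)$--quasi-geodesic $\gamma$ runs from the identity to some $h\in H$. Write $h$ as a word $h_1\cdots h_n$ that is geodesic in $H$, spell each $h_i$ by a reduced word of $G_\Gamma$, and let $\hat w$ be the reduction of the concatenation; by undistortedness $\hat w$ is a geodesic of $G_\Gamma$ whose length is comparable to $n$, and by Lemmas~\ref{4.2} and \ref{4.3} the letters surviving into $\hat w$ occur in controlled blocks along the $h_i$, so every vertex of $\hat w$ lies within the constant $\max_i\abs{h_i}_S$ of $H$. After replacing $\gamma$ by a nearby edge-path of comparable length, I would build a dual van Kampen diagram whose boundary reads $\hat w$ on one subarc and (the inverse spelling of) $\gamma$ on the complementary subarc, so that the embedded arcs of $\mathcal{A}$ record exactly how the two boundary words cancel.

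The engine of the argument is the $N$--join-busting property of Proposition~\ref{prop2}. Any long family of arcs cutting off a subword on the $\hat w$-side, or pairing a long portion of the $\gamma$-side against the $\hat w$-side, would by Lemma~\ref{3.1} force a subword of a reduced word for an element of $H$ to lie in a star---hence join---subgroup; long such subwords are forbidden. This is meant to cap the combinatorial ``width'' of the diagram by a constant depending only on $N$, the constant $D$ of Lemma~\ref{4.1}, $K$, and $C$, so that for each vertex $x$ of $\gamma$ a transverse arc to the $\hat w$-side crosses a uniformly bounded number of arcs of $\mathcal{A}$. Label reading along that transverse arc then exhibits $x$ within bounded distance of $\hat w$, hence of $H$, which is precisely strong quasiconvexity with $M=M(K,C)$.

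The hard part will be carrying out this width estimate for an arbitrary $(K,C)$--quasi-geodesic rather than a genuine geodesic. Because $G_\Gamma$ is not hyperbolic, I cannot first straighten $\gamma$ via a Morse lemma; instead the quasi-geodesic inequality must be played directly against the join-busting bound, so that a point of $\gamma$ lying far from $H$ is converted either into a long join subword of a reduced $H$-word (contradicting Proposition~\ref{prop2}) or into a detour too long to be consistent with the constants $K,C$. Making this trade-off quantitative---translating distance-from-$H$ into forbidden combinatorial width without losing control of the quasi-geodesic constants---is the crux, and it is exactly the step inherited \emph{mutatis mutandis} from Section 6 of \cite{KMT}, with their Lemmas 3.2, 4.1, 4.2 and Proposition 4.4 replaced by Lemmas~\ref{3.2}, \ref{4.1}, \ref{4.2} and Proposition~\ref{4.4} of this paper.
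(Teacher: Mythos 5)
Your proposal is correct and takes essentially the same approach as the paper: the paper gives no self-contained argument for Proposition~\ref{prop3}, but simply defers to Section 6 of \cite{KMT} with the RACG dual van Kampen diagrams and Lemmas~\ref{3.2}, \ref{4.1}, \ref{4.2} and Proposition~\ref{4.4} substituted for their RAAG counterparts, which is exactly the engine you invoke and the place where you also locate the real work. Your only deviation is the front-end reduction to strong quasiconvexity via Proposition~\ref{vfree} and the characterization of stability as strong quasiconvexity plus hyperbolicity from \cite{Tran2017}, whereas the paper (following \cite{KMT}) obtains the Hausdorff-distance clause of stability directly by showing any quasi-geodesic with endpoints in $H$ is uniformly close to the corresponding geodesic; this is a sound but cosmetic repackaging rather than a genuinely different proof.
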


\section{Subgroup divergence of join-free subgroups in right-angled Coxeter groups}

In this section, we study the subgroup divergence of join-free subgroups in right-angled Coxeter groups. We prove that the subgroup divergence of join-free subgroups in right-angled Coxeter groups can be polynomials of arbitrary degrees while it must be exactly quadratic in $\mathcal{CFS}$ right-angled Coxeter groups.

\subsection{Subgroup divergence in $\mathcal{CFS}$ right-angled Coxeter groups}
We first define the concept of $\mathcal{CFS}$ graphs.

\begin{defn}
Let $\Gamma$ be a non-join graph. We define the associated \emph{four-cycle} graph $\Gamma^4$ as follows. The vertices of $\Gamma^4$ are the induced loops of length four (i.e. four-cycles) in $\Gamma$. Two vertices of $\Gamma^4$ are connected by an edge if the corresponding four-cycles in $\Gamma$ share a pair of non-adjacent vertices. Given a subgraph $K$ of $\Gamma^4$, we define the \emph{support} of $K$ to be the collection of vertices of $\Gamma$ (i.e. generators of $G_\Gamma$) that appear in the four-cycles in $\Gamma$ corresponding to the vertices of $K$. Graph $\Gamma$ is said to be \emph{$\mathcal{CFS}$} if there exists a component of $\Gamma^4$ whose support is the entire vertex set of $\Gamma$.
\end{defn}

The following two lemmas contribute to the proof of the quadratic upper bound for the subgroup divergence of join-free subgroups in $\mathcal{CFS}$ right-angled Coxeter groups. 

\begin{lem}
\label{ltran}
Let $\Gamma$ be a non-join connected graph with the vertex set $S$. Let $H$ be a finitely generated join-free subgroup of $G_{\Gamma}$. There is a positive number $K$ such that the following holds. Let $g$ be an element in $G_\Gamma$ and $(s_1,t_1)$, $(s_2,t_2)$ two pairs of non-adjacent vertices in a four-cycle of $\Gamma$. Let $u_1=s_1t_1$ and $u_2=s_2t_2$. Then \[d_S(gu_1^{i}u_2^{j},H)\geq \frac{\abs{i}+\abs{j}}{K}-\abs{g}_S-1.\] 
\end{lem}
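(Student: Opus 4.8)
The inequality to prove is a lower bound on the distance from $gu_1^iu_2^j$ to $H$, where $u_1=s_1t_1$ and $u_2=s_2t_2$ are commuting infinite-order elements (each a product of two non-adjacent vertices of a common four-cycle). The strategy is to show that $u_1^iu_2^j$ is ``long and essentially join-free away from its ends,'' and then exploit the $N$--join-busting property of $H$ from Proposition~\ref{prop2}. First I would observe that since $(s_1,t_1)$ and $(s_2,t_2)$ are pairs of non-adjacent vertices of a single four-cycle, the four vertices $s_1,t_1,s_2,t_2$ span a four-cycle, so $s_1,t_1$ each commute with $s_2,t_2$; hence $u_1$ and $u_2$ commute and $\langle u_1,u_2\rangle$ is a subgroup isomorphic to $\Z\times\Z$ (equivalently $D_\infty\times D_\infty$ at the level of $\langle s_1,t_1,s_2,t_2\rangle$). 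Consequently the word $u_1^iu_2^j$, read as a word in $S$, is already reduced of length $2(\abs{i}+\abs{j})$ and lies entirely in the join subgroup $G_{\{s_1,t_1,s_2,t_2\}}$.

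\textbf{Key steps.} The heart of the argument is to estimate the length $\abs{gu_1^iu_2^j}_H$ using join-busting. Let $h\in H$ be any element and set $w$ to be a reduced word in $S$ representing $h$. By Proposition~\ref{prop2} there is $N=N(H)$ such that every join subword of every reduced word for an element of $H$ has length at most $N$. The plan is to produce from $gu_1^iu_2^j$ a long join subword in a reduced representative. Concretely, write $g$ as a reduced word and consider the concatenation with $u_1^iu_2^j$; after cancellation with $g$ (which can shorten the join portion by at most $\abs{g}_S$ on the relevant side), a reduced word for $gu_1^iu_2^j$ still contains a join subword whose length is at least $2(\abs{i}+\abs{j})-2\abs{g}_S$, since the surviving portion lies in the fixed join subgroup $G_{\{s_1,t_1,s_2,t_2\}}$. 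Then I would compare $gu_1^iu_2^j$ with an element $h\in H$: if $d_S(gu_1^iu_2^j,H)$ were small, say realized by $h$ with $d_S(gu_1^iu_2^j,h)=d$, then a reduced word for $h$ would contain a join subword of length roughly $2(\abs{i}+\abs{j})-2\abs{g}_S-2d$ (altering $h$ from $gu_1^iu_2^j$ by $d$ letters destroys at most $2d$ letters of the join subword). Join-busting forces this to be at most $N$, giving $2(\abs{i}+\abs{j})-2\abs{g}_S-2d\leq N$, and rearranging yields a bound of the desired shape $d\geq \tfrac{\abs{i}+\abs{j}}{K}-\abs{g}_S-1$ after absorbing $N$ into $K$.

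\textbf{The main obstacle.} The delicate point is the bookkeeping for how a join subword survives under two operations: first, left-multiplication by $g$, and second, replacement of $gu_1^iu_2^j$ by a nearby element $h\in H$. I must be careful that the middle block of $u_1^iu_2^j$ genuinely persists as a \emph{contiguous} join subword in a reduced word for $h$, rather than being scattered by the letter-swapping moves allowed by the Transpose Condition. The cleanest way to control this is to note that the letters $s_1,t_1,s_2,t_2$ all commute among themselves but the join-busting hypothesis constrains any reduced expression, so the total count of letters from $\{s_1,t_1,s_2,t_2\}$ that can appear consecutively (as a join subword) in a reduced word for $h\in H$ is at most $N$; therefore it suffices to count how many such letters are forced to remain after the bounded perturbation by $g$ and by the distance $d$. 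This turns the geometric distance estimate into a letter-counting estimate that join-busting directly bounds, and choosing $K$ large enough (depending on $N$ and the constant $2$) absorbs all the linear error terms.
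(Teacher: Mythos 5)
Your approach is sound and is a close cousin of the paper's own proof: both start from the same decomposition $h = g\,u_1^iu_2^j\,g_1$ (where $g_1$ realizes the distance $d = d_S(gu_1^iu_2^j,H)$ and $h\in H$), and both feed the result into the join-busting constant $N=N(H)$ from Proposition~\ref{prop2}. The difference is in what gets counted. The paper notes that each letter of $g$ and of $g_1$ is itself a join word, so some reduced word $w$ for $h$ is a concatenation of at most $\abs{g}_S+1+d$ join subwords, each of length at most $N$ by join-busting; hence $\ell(w)\le N(\abs{g}_S+d+1)$, while the triangle inequality gives $\ell(w)=\abs{h}_S\ge 2(\abs{i}+\abs{j})-\abs{g}_S-d$, and comparing the two bounds yields the lemma with $K=(N+1)/2$. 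You instead track the survival of the middle block as a single contiguous join subword of length at least $2(\abs{i}+\abs{j})-2\abs{g}_S-2d$ and apply join-busting to that one subword. Your version, once completed, gives the stronger linear estimate $d\ge(\abs{i}+\abs{j})-\abs{g}_S-N/2$, of which the stated lemma is a weakening; the paper's global length count is cruder but avoids any persistence argument.

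Two corrections to your write-up. First, the claim that ``the letters $s_1,t_1,s_2,t_2$ all commute among themselves'' is false and contradicts your own setup: $(s_1,t_1)$ and $(s_2,t_2)$ are non-adjacent pairs, so if $s_1$ and $t_1$ commuted then $u_1=s_1t_1$ would have order at most $2$ rather than infinite order. What is true, and all that is needed, is that these four vertices span a join subgraph (the four-cycle), so any reduced word in these letters is a join word. Second, and more importantly, your resolution of the ``main obstacle'' (contiguity) leans on that false claim; the correct justification is different and simpler. Reduce the concatenation $w_g\cdot(u_1^iu_2^j)\cdot w_{g_1}$ of reduced words using only the Deletion Condition: each deletion removes a pair of letters and never reorders, so the resulting reduced word for $h$ is an order-preserving subsequence of the concatenation, i.e.\ it has the form $\alpha\beta\gamma$ with $\alpha$, $\beta$, $\gamma$ subsequences of $w_g$, $u_1^iu_2^j$, $w_{g_1}$ respectively. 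Then $\beta$ is contiguous, reduced (a contiguous subword of a reduced word is reduced), and supported on the four-cycle, hence a join subword, and counting lengths gives $\abs{\beta}\ge\abs{h}_S-\abs{\alpha}-\abs{\gamma}\ge\bigl(2(\abs{i}+\abs{j})-\abs{g}_S-d\bigr)-\abs{g}_S-d$. There is no need to control what the Transpose Condition can do to other reduced expressions: Proposition~\ref{prop2} bounds join subwords of \emph{every} reduced word for $h$, so exhibiting this one reduced word with a long join subword already forces $2(\abs{i}+\abs{j})-2\abs{g}_S-2d\le N$. With this fix your argument is complete, and choosing $K$ appropriately in terms of $N$ recovers the stated inequality.
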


\begin{proof}
By Proposition \ref{prop2}, there is a positive integer $N$ such that for any reduced word $w$ representing $h\in H$, and any join subword $w'$ of $w$, we have $\ell(w')\leq N$. Let $K=(N+1)/2$ and we will prove that \[d_S(gu_1^{i}u_2^{j},H)\geq \frac{\abs{i}+\abs{j}}{K}-\abs{g}_S-1.\] 

Let $m=d_S(gu_1^{i}u_2^{j},H)$. Then there is an element $g_1$ in $G_\Gamma$ with $\abs{g_1}_S=m$ and $h$ in $H$ such that $h=gu_1^{i}u_2^{j}g_1$. Since $u_1^{i}u_2^{j}$ is an element in a join subgroup of $G_\Gamma$ and $\abs{g_1}_S=m$, then $h$ can be represented by a reduced word $w$ that is a product of at most $(\abs{g}_S+1+m)$ join subwords. Also, the length of each join subword of $w$ is bounded above by $N$. Therefore, the length of $w$ is bounded above by $N\bigl(\abs{g}_S+m+1\bigr)$. Also, \[\ell(w)\geq \abs{u_1^{i}u_2^{j}}_S-\abs{g_1}_S-\abs{g}_S\geq 2(\abs{i}+\abs{j})-m-\abs{g}_S.\] 

This implies that \[2(\abs{i}+\abs{j})-m-\abs{g}_S\leq N\bigl(\abs{g}_S+m+1\bigr).\]

Therefore, \[d_S(gs_1^{i}s_2^{j},H)=m\geq \frac{2(\abs{i}+\abs{j})}{N+1}-\abs{g}_S-\frac{N}{N+1}\geq \frac{\abs{i}+\abs{j}}{K}-\abs{g}_S-1.\]
\end{proof}

\begin{lem}
\label{ipt}
Let $\Gamma$ be a non-join $\mathcal{CFS}$ connected graph with the vertex set $S$. Let $C$ be a component of $\Gamma^4$ whose support is the entire vertex set of $\Gamma$. Let $H$ be a finitely generated join-free subgroup of $G_{\Gamma}$ and $h$ an arbitrary element in $H$. There is a number $L=L(H,C)\geq 1$ such that the following holds. Let $m\geq L^2$ an integer and $u=st$, where $(s,t)$ is a pair of non-adjacent vertices in some induced four-cycle $Q_0$ of $\Gamma$ that corresponds to a vertex in $C$. There is a path $\alpha$ outside the $(m/L-L)$--neighborhood of $H$ connecting $u^m$ and $hu^m$ with the length bounded above by $Lm$. 
\end{lem}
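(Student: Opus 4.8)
I want to produce an explicit path $\alpha$ from $u^m$ to $hu^m$ that stays far from $H$, and I will build it out of ``four-cycle moves'' supplied by the component $C$ of $\Gamma^4$. The guiding principle is that the support of $C$ is all of $\Gamma$, so any generator $s_0 \in \supp(h)$ sits inside some four-cycle $Q$ that is joined to $Q_0$ by a chain of four-cycles in $C$, each consecutive pair sharing a non-adjacent pair of vertices. Thus I can pass from the flat spanned by $(s,t)$ to a flat containing $s_0$ through a bounded-length sequence of intermediate flats, and in each flat I have room to travel along high powers of the corresponding commuting products $u_i = s_i t_i$. The idea is that moving ``out to infinity'' along these flats, performing the multiplication by a single letter of $h$ far out, and coming back, keeps me away from $H$ because $H$ is join-busting.

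\textbf{Key steps, in order.} First I would fix the constant: let $N$ be the join-busting constant from Proposition~\ref{prop2}, and let $P$ be a uniform bound on the combinatorial diameter of $C$ needed to connect $Q_0$ to a four-cycle containing any prescribed vertex; the number of generators in $\supp(h)$ is at most the (fixed) number of vertices of $\Gamma$. From these I assemble $L = L(H,C)$. Second, for a single generator $s_0$ appearing in $h$, I would describe the elementary path: travel from $u^m$ out along the chain of flats to reach a point of the form $(\text{prefix})\, v^m$ where $v$ is a non-adjacent pair in a four-cycle $Q$ containing $s_0$, multiply by $s_0$ out there, then retrace the chain back; because each leg runs along powers of a commuting pair, any element of $H$ lying within distance roughly $m/L$ of a point on this leg would force a join subword of $H$ of length proportional to $m$, contradicting $N$-join-busting once $m$ is large. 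This is exactly the mechanism of Lemma~\ref{ltran}, which lower-bounds $d_S(g u_1^i u_2^j, H)$ linearly in $\abs{i}+\abs{j}$; I would invoke it (with the appropriate shifting element $g$) at each leg to certify that the leg avoids the $(m/L - L)$-neighborhood. Third, I would concatenate these elementary paths over all letters of $h$ in order, so that the total effect of the path is multiplication by $h$, yielding a path from $u^m$ to $hu^m$. Fourth, I would bound the length: each elementary path has length $O(m)$ (the two excursions of length $\le P$ times $O(m)$ plus the single-letter insertions), and there are at most $\abs{\supp(h)}$ of them, so the total length is $\le Lm$ after enlarging $L$.

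\textbf{The main obstacle.} The delicate point is the distance estimate along the intermediate legs of the chain, not just the first and last flats. When I move from the flat of $(s,t)$ to an adjacent flat in $C$, the two flats share a non-adjacent pair, so the transition element is short, but after several transitions the ``prefix'' element accumulates bounded length and I must apply Lemma~\ref{ltran} with $g$ equal to this accumulated prefix. Since $\abs{g}_S$ is bounded by a constant depending only on $P$ and the geometry of $\Gamma$ (independent of $m$), the term $-\abs{g}_S - 1$ in Lemma~\ref{ltran} is absorbed into the $-L$ of the statement, and the linear-in-$m$ main term survives; making this bookkeeping uniform across all legs and all letters of $h$ is where the constant $L$ really has to be chosen, and is the part requiring the most care. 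The complementary upper bound on the length is comparatively routine, since each leg is an explicit word of controlled length.
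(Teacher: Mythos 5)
Your overall strategy is the same as the paper's: write $h$ as a word in the generators, use the fact that the support of $C$ is all of $\Gamma$ to place each letter of $h$ in a four-cycle corresponding to a vertex of $C$, travel between the associated flats along chains of four-cycles in $C$ at ``height'' $m$, insert each letter far out using commutation, and certify the distance from $H$ at every stage with Lemma~\ref{ltran}. (The paper transitions directly from the four-cycle of one letter to that of the next and only returns to $Q_0$ at the very end, whereas you go out-and-back through $Q_0$ for each letter; this difference is immaterial.)

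There is, however, a genuine error in your constant bookkeeping. Your elementary paths are indexed by the \emph{letters} (occurrences) of a reduced word $s_1s_2\cdots s_k$ representing $h$, so there are $k=\abs{h}_S$ of them, not at most $\abs{\supp(h)}$ as you claim in your fourth step: $\abs{\supp(h)}$ is bounded by the number of vertices of $\Gamma$, but $\abs{h}_S$ is not (take $h=(xy)^n$, with two generators in its support and $2n$ letters). Consequently the constant $L$ you assemble from $N$, $P$, and $\abs{V(\Gamma)}$ alone is independent of $h$, and with such an $L$ the conclusion is simply false: any path from $u^m$ to $hu^m$ has length at least $d_S(u^m,hu^m)=\abs{u^{-m}hu^m}_S\geq \abs{h}_S-4m$, which exceeds $Lm$ once $\abs{h}_S$ is large. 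The same slip appears in your ``main obstacle'' paragraph, where you claim the accumulated prefix has length depending only on $P$ and $\Gamma$; in fact the prefix contains the letters of $h$ already processed, so its length is controlled by $\abs{h}_S$. The repair is exactly what the paper does: since $h$ is quantified \emph{before} $L$ in the statement, $L$ may (and must) depend on $\abs{h}_S$ --- the paper takes $L=2(k+1)(M+2)+K+k+M+1$ with $k=\abs{h}_S$ and $M=\diam(C)$ --- after which your argument goes through: there are $k$ elementary paths, each of length $O\bigl((M+2)m\bigr)$, and Lemma~\ref{ltran} applied with a prefix $g$ of length at most $k$ gives distance at least $m/K-k-1\geq m/L-L$ from $H$. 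One further small point: when you ``multiply by $s_0$ out there,'' you need $s_0$ to commute with the pair $v$, so $v$ must be chosen to be the diagonal of the four-cycle $Q$ \emph{not} containing $s_0$; this should be said explicitly.
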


\begin{proof}
Let $M=\diam(C)$, $K$ the positive integer as in Lemma \ref{ltran} and $k=\abs{h}_S$. Let $L=2(k+1)(M+2)+K+k+M+1$. Choose a reduced word \[w=s_1s_2\cdots s_k\text{, where $s_i\in S$,}\] that represents element $h$. Since the support of the component $C$ of $\Gamma^4$ is the collection of vertices of $\Gamma$, for each $i\in \{1,2,\cdots, k\}$ there is a four-cycle $Q_i$ that corresponds to a vertex of the component $C$ of $\Gamma^4$ such that $Q_i$ contains the vertex $s_i$. Let $(a_i,b_i)$ be a pair of non-adjacent vertices in $Q_i$, $u_i=a_ib_i$ and $w_i=s_1s_2\cdots s_i$. Then the length of each word $w_i$ is bounded above by $k$, $w_{i+1}=w_is_i$, and $w_k=w$ that represents element $h$. 

We now construct a path $\alpha_0$ outside the $(m/L-L)$--neighborhood of $H$ connecting $u^m$ and $w_1u_1^m$ with the length bounded above by $2(M+2)m$. Since $M=\diam{C}$, we can choose positive integer $n\leq M$ and $n+1$ four-cycles $P_0, P_1, \cdots, P_n$ that corresponds to a vertex of the component $C$ of $\Gamma^4$ such that the following conditions hold:
\begin{enumerate}
\item $P_0=Q_0$ contains the pair of non-adjacent vertices $(s,t)$ and let $v_0=u$.
\item $P_n=Q_1$ contains the pair of non-adjacent vertices $(a_1,b_1)$ and let $v_{n+1}=u_1$.
\item $P_{j-1}$ and $P_{j}$ share an pair of non-adjacent vertices $(c_j,d_j)$, where $j\in \{1, 2, \cdots, n\}$ and let $v_j=c_jd_j$.
\end{enumerate}
For each $j\in \{0, 1, 2, \cdots, n\}$ let $\beta_j$ be a path connecting $v_j^m$ and $v_{j+1}^m$ of length $2m$ with vertices \[v_j^m, v_j^mv_{j+1}, v_j^mv_{j+1}^2, \cdots,v_j^mv_{j+1}^m, v_j^{m-1}v_{j+1}^m,v_j^{m-2}v_{j+1}^m, \cdots,v_{j+1}^m.\]
 
By Lemma \ref{ltran} the above vertices must lie outside the $(m/K-1)$--neighborhood of $H$. Therefore, these vertices also lies outside the $(m/L-L)$--neighborhood of $H$. Therefore, $\beta_j$ is a path outside the $(m/L-L)$--neighborhood of $H$ connecting $v_j^m$ and $v_{j+1}^m$. Since $w_1u_1^m=s_1u_1^m=u_1^ms_1$, then we can connect $u_1^m$ and $w_1u_1^m$ by an edge $\beta_{n+1}$ labelled by $s_1$. Let $\alpha_0=\beta_0\cup\beta_1\cup\cdots\cup \beta_n\cup\beta_{n+1}$. Then, it is obvious that the path $\alpha_0$ outside the $(m/L-L)$--neighborhood of $H$ connecting $u^m$ and $w_1u_1^m$ with the length bounded above by $2(M+2)m$. 

By similar constructions as above, for each $i\in \{1,2,\cdots, k-1\}$ there is a path $\alpha_i$ outside the $(m/L-L)$--neighborhood of $H$ connecting $w_iu_i^m$ and $w_{i+1}u_{i+1}^m$ with the length bounded above by $2(M+2)m$. We can also construct a path $\alpha_k$ outside the $(m/L-L)$--neighborhood of $H$ connecting $hu_k^m$ and $hu^m$ with the length bounded above by $2(M+1)m$. Let $\alpha=\alpha_0\cup\alpha_1\cup\cdots\cup \alpha_k$. Then, it is obvious that the path $\alpha$ outside the $(m/L-L)$--neighborhood of $H$ connecting $u^m$ and $hu^m$ with the length bounded above by $2(k+1)(M+2)m$. By the choice of $L$ we observe that the length of $\alpha$ is also bounded above by $Lm$.
\end{proof}

We now prove the quadratic subgroup divergence of join-free subgroups in $\mathcal{CFS}$ right-angled Coxeter groups.

\begin{thm}
Let $\Gamma$ be a non-join connected $\mathcal{CFS}$ graph with the vertex set $S$. Let $H$ be a finitely generated join-free subgroup of $G_{\Gamma}$. Then the subgroup divergence of $H$ in $G_{\Gamma}$ is exactly quadratic. 
\end{thm}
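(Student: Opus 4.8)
The plan is to establish the quadratic divergence by proving matching upper and lower bounds. Since subgroup divergence is a pair quasi-isometry invariant, I may compute everything in the Cayley graph of $G_\Gamma$ with respect to the vertex generating set $S$. The strategy is to show $\operatorname{div}(G_\Gamma, H) \preceq r^2$ and $r^2 \preceq \operatorname{div}(G_\Gamma, H)$ separately, and conclude equivalence.

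For the \emph{upper bound}, the idea is to use Lemma~\ref{ipt} as the engine. Fix $\rho \in (0,1]$ and $n \geq 2$, and take two points $x_1, x_2 \in \partial N_r(H)$ with $d_r(x_1, x_2) < \infty$ and $d(x_1, x_2) \geq nr$. I would translate by an element of $H$ so that I may take one basepoint near a power $u^m$ of a generator $u = st$ coming from a non-adjacent pair in a four-cycle of the support component $C$ (the $\mathcal{CFS}$ hypothesis guarantees such $u$ exists with full support reachable). The key point is that Lemma~\ref{ipt} produces, for any $h \in H$, a path of length at most $Lm$ connecting $u^m$ to $hu^m$ while staying outside the $(m/L - L)$--neighborhood of $H$; choosing $m$ proportional to $r$ so that $m/L - L \geq \rho r$ keeps the path in the complement of $N_{\rho r}(H)$, and concatenating at most $O(n)$ such paths (to travel the required $H$--distance between the cosets of $x_1$ and $x_2$) gives a connecting path of length $O(nr)$ after the translation; reassembling gives a bound of the form $\delta^n_\rho(r) \preceq r^2$, since in the worst case one must traverse a distance comparable to $nr$ using arcs each of length comparable to $r$. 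The main technical care here is reducing the general configuration of $x_1, x_2$ to the model situation $u^m, hu^m$ via Lemma~\ref{ltran}, which controls exactly how far powers $u_1^i u_2^j$ stray from $H$.

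For the \emph{lower bound}, I would show that no path avoiding the $\rho r$--neighborhood of $H$ can connect points at distance $\geq nr$ too efficiently. The join-busting property from Proposition~\ref{prop2} is the crucial input: any reduced word representing an element of $H$ has all join subwords of bounded length $N$. Consequently, a path from $x_1$ to $x_2$ in the complement of $N_{\rho r}(H)$ of length $\ell$ corresponds to a word that, when pushed back into $H$, must be broken into roughly $\ell$ join pieces, forcing $\ell \gtrsim r^2/n$ by an isoperimetric-type estimate analogous to the proof of Lemma~\ref{ltran}. This reproduces the classical quadratic lower bound for divergence in the presence of a flat (the four-cycle supplies a $\mathbb{Z}^2$--flat via $u_1, u_2$), and I expect it to follow the standard argument that projecting onto $H$ collapses join directions.

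The hard part will be the upper bound, specifically the careful bookkeeping in reducing an arbitrary admissible pair $(x_1, x_2)$ to the standard powers-of-$u$ configuration while simultaneously controlling both the length of the connecting path (to stay $O(r^2)$) and its distance from $H$ (to stay outside $N_{\rho r}(H)$). The interplay between the constant $L$ from Lemma~\ref{ipt} and the parameter $\rho$ requires choosing $m \sim r$ with the right proportionality so that $m/L - L \geq \rho r$ holds for all large $r$; this is routine but must be done explicitly. Once both bounds are in hand, the quadratic equivalence $\operatorname{div}(G_\Gamma, H) \sim r^2$ follows immediately.
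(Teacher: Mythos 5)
Your proposal contains two genuine gaps, both traceable to the quantifier structure of the definition of subgroup divergence. The function $\sigma^n_{\rho}(r)$ is an \emph{infimum} over admissible pairs $x_1,x_2\in\partial N_r(H)$, so an upper bound only requires exhibiting \emph{one} admissible pair joined by one short path avoiding $N_{\rho r}(H)$ --- and this is exactly what the paper does: it builds the explicit pair of points $x,y\in\partial N_r(H)$ near $u^m$ and $h^ku^m$, joined by $\gamma_1\cup\alpha_0\cup h\alpha_0\cup\cdots\cup h^{k-1}\alpha_0\cup\gamma_2$. Your plan instead starts from an \emph{arbitrary} admissible pair and tries to reduce it to the model configuration; this is not only unnecessary, it cannot be carried out, since admissible pairs may be arbitrarily far apart (nothing prevents $d(x_1,x_2)\gg r^2$ while $d_r(x_1,x_2)<\infty$), so no function of $r$ alone bounds connecting-path length over all of them. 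The step you flag as ``the hard part'' is precisely the step that would fail. Your length accounting also goes wrong: each arc supplied by Lemma~\ref{ipt} joins $u^m$ to $hu^m$ for a \emph{fixed} $h\in H$, so it advances only the bounded distance $\abs{h}_S$ along $H$ while costing length $\sim r$; to separate the endpoints by $nr$ one needs $\sim nr$ such arcs (the paper takes $k\approx L_1 nr$ translates $h^i\alpha_0$), not $O(n)$, and the quadratic bound arises as $(\text{number of arcs})\times(\text{arc length})\sim nr\cdot r$, whereas your stated count of ``$O(n)$ paths of total length $O(nr)$'' would give a linear bound, which is impossible given the lower bound.

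The lower bound is also not established by your sketch. The join-busting property of Proposition~\ref{prop2} constrains reduced words representing elements \emph{of} $H$; it says nothing directly about paths lying outside $N_{\rho r}(H)$, and ``pushing a path back into $H$'' is not a defined operation, so the claimed isoperimetric estimate has no content as stated. Moreover, the heuristic that ``the four-cycle supplies a $\mathbb{Z}^2$--flat, hence a quadratic lower bound'' points the wrong way: in this argument the flat is what the \emph{upper} bound exploits (one travels along flat directions to stay far from $H$, which is what Lemma~\ref{ltran} certifies); it provides no obstruction. What the lower bound actually requires is that \emph{every} path avoiding $N_{\rho r}(H)$ between \emph{every} admissible pair is at least quadratically long, and the paper obtains this by combining the stability of join-free subgroups (Propositions~\ref{prop2} and~\ref{prop3}) with the external results of Cashen~\cite{Cashen2018} and Theorem~D of~\cite{JDT2018}, which convert strong quasiconvexity into a quadratic lower bound on subgroup divergence. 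A self-contained replacement for those citations would amount to reproving them; nothing in your sketch does so.
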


\begin{proof}
By the work of Cashen in \cite{Cashen2018} and Theorem D in \cite{JDT2018}, the subgroup divergence of $H$ in $G_{\Gamma}$ is at least quadratic. Therefore, we only need to prove that the subgroup divergence of $H$ in $G_{\Gamma}$ is at most quadratic. Since $\Gamma$ is a $\mathcal{CFS}$ graph, there is a component $C$ of $\Gamma^4$ whose support is the entire vertex set of $\Gamma$. Let $L=L(H,C)$ be a constant as in Lemma \ref{ipt} and $h$ an arbitrary infinite order group element in $H$. Since each cyclic subgroup in a $\CAT(0)$ group is undistorted, there is a positive integer $L_1$ such that \[\abs{h^k}_S\geq \frac{\abs{k}}{L_1}-L_1 \text{ for each integer $k$}.\] Let $\{\sigma^n_{\rho}\}$ be the subspace divergence of $H$ in the Cayley graph $\Sigma^{(1)}_{\Gamma}$. We will prove that function $\sigma^n_{\rho}(r)$ is bounded above by some quadratic function for each $n\geq 2$ and $\rho \in (0,1]$.

Choose a positive integer $m\in \bigl[L(L+r),2L(L+r)\bigr]$ and a group element $u=st$, where $(s,t)$ is a pair of non-adjacent vertices in a four-cycle $Q_0$ of $\Gamma$ that corresponds to a vertex in $C$. Then, there is a path $\alpha_0$ outside the $(m/L-L)$--neighborhood of $H$ connecting $u^m$ and $hu^m$ with the length bounded above by $Lm$. It is obvious that the path $\alpha_0$ also lies outside the $r$--neighborhood of $H$ by the choice of $m$. Choose a positive integer $k$ which lies between $L_1\bigl(nr+16L(L+r)+L_1\bigr)$ and $L_1\bigl(nr+16L(L+r)+L_1+1\bigr)$. Let $\alpha=\alpha_0\cup h\alpha_0\cup h^2\alpha_0\cup\cdots\cup h^{k-1}\alpha_0$. Then, $\alpha$ is a path outside the $r$--neighborhood of $H$ connecting $u^m$, $h^ku^m$ with the length bounded above by $kLm$. By the choice of $k$ and $m$, the length of $\alpha$ is bounded above by $2L_1L^2(L+r)\bigl(nr+16L(L+r)+L_1+1\bigr)$.

Since $r\leq d_S(u^m, H)\leq 2m$, then there is a path $\gamma_1$ outside $N_r(H)$ connecting $u^m$ and some point $x\in \partial N_r(H)$ such that the length of $\gamma_1$ is bounded above by $2m$. By the choice of $m$, the length of $\gamma_1$ is also bounded above by $4L(L+r)$. Similarly, there is a path $\gamma_2$ outside $N_r(H)$ connecting $h^ku^m$ and some point $y\in \partial N_r(H)$ such that the length of $\gamma_2$ is bounded above by $4L(L+r)$. Let $\bar{\alpha}=\gamma_1\cup\alpha\cup\gamma_2$ then $\bar{\alpha}$ is a path outside $N_r(H)$ connecting $x$, $y$ and the length of $\bar{\alpha}$ is bounded above by $2L_1L^2(L+r)\bigl(nr+16L(L+r)+L_1+1\bigr)+8L(L+r)$. Therefore, for each ~$\rho \in (0,1]$
\[d_{\rho r}(x,y)\leq 2L_1L^2(L+r)\bigl(nr+16L(L+r)+L_1+1\bigr)+8L(L+r).\]

Also, \begin{align*}d_S(x,y)&\geq d_S(u^m, h^ku^m)-d_S(u^m, x)-d_S(h^ku^m,y)\\&\geq \bigl(\abs{h^k}_S-4m\bigr)-4L(L+r)-4L(L+r)\geq \frac{k}{L_1}-L_1-16L(L+r)\\&\geq \bigl(nr+16L(L+r)\bigr)-16L(L+r)\geq nr.\end{align*}

 Thus, for each ~$\rho \in (0,1]$
\[\sigma_{\rho}^n(r)\leq 2L_1L^2(L+r)\bigl(nr+16L(L+r)+L_1+1\bigr)+8L(L+r).\] 

This implies that the subgroup divergence of $H$ in $A_{\Gamma}$ is at most quadratic. Therefore, the subgroup divergence of $H$ in $A_{\Gamma}$ is exactly quadratic.

\end{proof}

%\begin{defn}
%Let $\Sigma$ be a $\CAT(0)$ cube complex. Let $\gamma$ be a minimal geodesic from a hyperplane $H_1$ to a hyperplane $H_2$ and set $p = H_1 \cap g$. $HDiv_{\gamma}(H_1, H_2)(r)$ is the length of a shortest path from $H_1$ to $H_2$ which avoids the ball $B(p,r)$.
%\end{defn}
\subsection{Higher-degree polynomial subgroup divergence}

We first review the concept of the divergence of geodesic spaces and finitely generated groups in \cite{MR1254309}.
\begin{defn}
Let $X$ be a geodesic space and $x_0$ one point in $X$. For each $\rho \in (0,1]$, we define a function $\delta_{\rho}\!:[0, \infty)\to [0, \infty)$ as follows: 

For each $r$, let $\delta_{\rho}(r)=\sup d_{\rho r}(x_1,x_2)$ where the supremum is taken over all $x_1, x_2 \in S_r(x_0)$ such that $d_{\rho r}(x_1, x_2)<\infty$.

The family of functions $\{\delta_{\rho}\}$ is the \emph{divergence} of $X$ with respect to the point $x_0$, denoted $Div_{X,x_0}$.

In \cite{MR1254309}, Gersten show that the divergence $Div_{X,x_0}$ is, up to the relation ~$\sim$, a quasi-isometry invariant which is independent of the chosen basepoint. The \emph{divergence} of $X$, denoted $Div_X$, is then, up to the relation ~$\sim$, the divergence $Div_{X,x_0}$ for some point $x_0$ in $X$. 

If the space $X$ has the \emph{geodesic extension property} (i.e. any finite geodesic segment can be extended to an infinite geodesic ray), then it is not hard to show that $\delta_{\rho}\sim\delta_{1}$ for each $\rho \in (0,1]$. In this case, we can consider the divergence of $X$ as the function $\delta_1$. 

The \emph{divergence} of a finitely generated group $G$, denoted $Div(G)$, is the divergence of its Cayley graphs.
\end{defn}

%\begin{rem}
%\label{rma1}
%Let $X$ be a one-ended geodesic space with \emph{geodesic extension property} (.i.e. every geodesic segment is contained in a bi-infinite geodesic). Let $\{\delta_{\rho}\}$ be the divergence of $X$ with respect to some basepoint $x_0$. It is not hard to see that $\delta_{\rho} \sim \delta_1$ for each $\rho \in (0,1]$. In this case, we think of $Div_{X,x_0}$ as the function of $\delta_1$. In particular, we can consider the divergence $Div_X$ of the space $X$, up to the relation ~$\sim$, as a function. In particular, we can consider the divergence of 
%\end{rem}

The following definition was introduced by Levcovitz in \cite{IL} to study divergence in Coxeter groups.
\begin{defn}
Let $\Gamma$ be a finite, connected, simplicial graph. A pair of non-adjacent vertices $(s, t)$ is \emph{rank 1} if $s$ and $t$ are not contained in some induced four-cycle of $\Gamma$. Additionally, $(s, t)$ is \emph{rank $n$} if either every pair of non-adjacent vertices $(s_1, s_2)$ with $s_1, s_2 \in \Lk(s)$ is rank $n-1$ or every pair of non-adjacent vertices $(t_1, t_2)$ with $t_1, t_2 \in \Lk(t)$ is rank $n-1$.
\end{defn}

\begin{prop}
\label{pds1}
Let $\Gamma$ be a finite, connected, simplicial graph and $n$ a positive integer. There is a polynomial $f_n$ of degree $n$ such that the following hold. Let $(s,t)$ be a rank $n$ pair of vertices of $\Gamma$ and let $H_1$, $H_2$ be two hyperplanes of $\Sigma_\Gamma$ of types $s$, $t$ respectively such that their supports intersect. Let $p$ be a vertex in the intersection between two support of $H_1$ and $H_2$. The length of the any path from $H_1$ to $H_2$ which avoids the ball $B(p,r)$ is bounded below by $f_n(r)$. 
\end{prop}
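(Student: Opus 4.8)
The plan is to fix the configuration by an isometry of $\Sigma_\Gamma$ and then induct on the rank $n$. Since $G_\Gamma$ acts on the $\CAT(0)$ cube complex $\Sigma_\Gamma$ by cubical isometries that permute hyperplanes together with their supports, I would first translate so that the common vertex $p$ becomes the identity $e$; then $H_1$ is the type-$s$ hyperplane and $H_2$ the type-$t$ hyperplane whose supports both contain $e$. Throughout, I read ``a path from $H_1$ to $H_2$'' as a path of $\Sigma_\Gamma^{(1)}$ running between the two supports around $p$, and ``avoiding $B(p,r)$'' forces each of its vertices to have $d_S(\cdot,e)\ge r$. The goal is to build $f_n$ so that any such path has length at least $f_n(r)$.

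For the base case $n=1$, I would translate the defining condition into graph language: the non-adjacent pair $(s,t)$ lies in no induced four-cycle precisely when $\Lk(s)\cap\Lk(t)$ contains no two non-adjacent vertices, i.e.\ it is a clique. Consequently $G_{\St(s)}\cap G_{\St(t)}=G_{\Lk(s)\cap\Lk(t)}$ is a finite group, so the two supports coincide only inside a ball $B(e,C)$ of fixed radius $C$. Using the gate projection of the $\CAT(0)$ cube complex onto the convex subcomplex carrying $H_2$, the image of the subcomplex carrying $H_1$ is exactly this bounded bridge; hence a vertex $x_0$ on the first support with $d_S(x_0,e)\ge r$ satisfies $d_S\bigl(x_0,\text{support}(H_2)\bigr)\ge r-C$. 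Any path realizing the connection therefore has length at least $r-C$, so $f_1(r)=r-C$ is linear as required.

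For the inductive step I assume the statement for rank $n-1$ and that $(s,t)$ has rank $n$; relabelling $s$ and $t$ if necessary, I may assume every pair of non-adjacent vertices of $\Lk(s)$ is rank $n-1$. Let $\alpha$ be a path from $H_1$ to $H_2$ avoiding $B(e,r)$. The idea is that $\alpha$ is forced to resolve a nested family of rank-$(n-1)$ obstructions, one at each scale $\rho$ with $1\le \rho\lesssim r$: passing from the support of $H_1$ out to the radius-$\rho$ portion of $\alpha$, the path must cross hyperplanes whose types lie in $\Lk(s)$, and a suitable pair of such types that are non-adjacent in $\Lk(s)$ forms a rank-$(n-1)$ pair. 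By Lemma \ref{lbc} the two corresponding hyperplanes have supports meeting at a vertex $p_\rho$ with $d_S(p_\rho,e)\asymp \rho$, and the subarc of $\alpha$ joining these hyperplanes avoids $B(p_\rho,c\rho)$; by the inductive hypothesis that subarc has length at least $f_{n-1}(c\rho)$. Choosing the scales so that the resulting subarcs are pairwise disjoint and summing, the length of $\alpha$ is at least $\sum_{\rho} f_{n-1}(c\rho)$, which is equivalent to a polynomial of degree $n$; this defines $f_n$.

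The main obstacle is the inductive step, and specifically the passage from degree $n-1$ to degree $n$. One must show that a single path avoiding $B(e,r)$ genuinely manufactures rank-$(n-1)$ sub-configurations at every scale up to a definite multiple of $r$, that in each case the two sub-hyperplanes have intersecting supports so the inductive hypothesis applies (this is exactly the role of the crossing criterion in Lemma \ref{lbc}), and that the associated subarcs can be organized to be disjoint so that their lengths add. It is the summation over the $\sim r$ scales that raises the degree by one; by contrast, the base case is a soft consequence of the finiteness of $G_{\Lk(s)\cap\Lk(t)}$ for a rank-one pair.
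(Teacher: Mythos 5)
Your base case is sound: for a rank~$1$ pair, $\Lk(s)\cap\Lk(t)$ is a clique, so $G_{\St(s)}\cap G_{\St(t)}=G_{\Lk(s)\cap\Lk(t)}$ is finite, and since the gate projection of one carrier onto the other equals their intersection (both carriers are convex and share the vertex $p$), any path starting on the carrier of $H_1$ outside $B(p,r)$ must travel at least $r-C$ to reach the carrier of $H_2$. The genuine gap is the inductive step, and it is not a detail: everything that makes the proposition true for $n\geq 2$ is contained in the three assertions you explicitly defer --- that a path avoiding $B(p,r)$ must cross, at roughly $r$ distinct scales $\rho$, a pair of hyperplanes whose types are non-adjacent vertices of $\Lk(s)$; that each such pair has carriers meeting at a vertex $p_\rho$ with $d_S(p_\rho,p)\asymp\rho$ while the relevant subarc of the path avoids $B(p_\rho,c\rho)$; and that these subarcs can be arranged to be pairwise disjoint so their lengths add. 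None of this follows from Lemma~\ref{lbc}: that lemma gives an algebraic criterion for two hyperplanes to intersect (or for a third hyperplane to cross two given ones), but it says nothing about where the carriers of the hyperplanes crossed by your path meet, why the subarc between them stays out of a ball centered at that meeting point, or why the subarcs at different scales are disjoint. There is also a degenerate case your scheme cannot see: if $\Lk(s)$ is a clique, then $(s,t)$ is vacuously rank $n$ for every $n$, yet there are no non-adjacent pairs in $\Lk(s)$ to induct on; this case must be handled separately (e.g.\ by noting $G_{\St(s)}$ is then finite, so the statement is vacuous for large $r$).

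What you are attempting to reconstruct is exactly Levcovitz's theory of degree~$n$ $M$-separated hyperplanes, and the paper does not reprove it either: its ``proof'' of Proposition~\ref{pds1} is a citation. It observes that for a rank~$n$ pair the two hyperplanes are degree~$n$ $M$-separated in the sense of Definition~6.1 of \cite{IL} (shown in the proof of Theorem~7.9 there), invokes Theorem~6.2 of \cite{IL} for the polynomial lower bound, and uses the finiteness of the set of rank~$n$ pairs of $\Gamma$ to replace the pair-dependent polynomial by a single $f_n$. Your outline is a fair description of the strategy behind Levcovitz's Theorem~6.2, which is proved by a disk-diagram/dual-curve analysis of precisely the kind you gesture at, but as written it asserts that theorem's content rather than proving it; to complete the argument you would either have to develop that machinery or, as the paper does, cite it. Note also that your induction, even if completed, still needs the final uniformity step: the constants and polynomial it produces depend a priori on the pair $(s,t)$, whereas the proposition demands one $f_n$ for all rank~$n$ pairs, which is exactly where the finiteness-of-pairs remark enters.
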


The above proposition is a result from \cite{IL}. More precisely, two hyperplanes $H_1$ and $H_2$ are degree $n$ $M$-separated in the sense of Definition 6.1 in \cite{IL} (see the proof of Theorem 7.9 in \cite{IL}). Therefore, there is a polynomial $g_n$ of degree $n$ such that the length of any path from $H_1$ to $H_2$ which avoids the ball $B(p,r)$ is bounded below by $g_n(r)$ (see Theorem 6.2 in \cite{IL}). Since the number of rank $n$ pair of vertices in $\Gamma$ is finite, we can choose a universal polynomial $f_n$ of degree $n$ as in the above lemma.

\begin{thm}[Theorem 7.9 in \cite{IL}]
Let $\Gamma$ be a finite, connected, simplicial graph. Suppose $\Gamma$ contains a rank $n$ pair $(s, t)$, then $Div(W_Γ)$ is bounded below
by a polynomial of degree $n+1$.
\end{thm}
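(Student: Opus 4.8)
The plan is to work in the Davis complex $\Sigma_\Gamma$ with basepoint $x_0=e$, exhibit two points on the sphere $S_r(e)$ whose distance in the complement of $B(e,\rho r)$ grows like a degree $n+1$ polynomial, and then pass to the group via the quasi-isometry invariance of divergence and the geometric action of $W_\Gamma$ on $\Sigma_\Gamma$. Since $(s,t)$ is a rank $n$ pair the vertices $s$ and $t$ are non-adjacent, so $u=st$ has infinite order and determines a bi-infinite combinatorial geodesic (an axis) $\dots,u^{-1},e,u,u^2,\dots$ through $e$ in the $1$--skeleton of $\Sigma_\Gamma$. I would take $x_1=u^{a}$ and $x_2=u^{-a}$ with $a=\lceil r/2\rceil$, so that $d_S(x_0,x_i)=r$ up to a bounded error and both points lie essentially on $S_r(e)$; the aim is then to bound $\delta_\rho(r)=d_{\rho r}(x_1,x_2)$ from below.

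First I would record the nested family of hyperplanes dual to the positive half of this axis. For $i\ge 0$ let $A_i$ be the hyperplane of type $s$ dual to the edge joining $(st)^i$ and $(st)^is$, and $B_i$ the hyperplane of type $t$ dual to the edge joining $(st)^is$ and $(st)^{i+1}$. Consecutive hyperplanes $A_i$ and $B_i$ are dual to edges sharing the vertex $p_i=(st)^is$, so their supports intersect at $p_i$, and $d_S(e,p_i)=2i+1$. The half-spaces they bound are totally ordered by inclusion, and whenever $a>i$ each of $A_i,B_i$ separates $x_1$ from $x_2$; hence any path $\alpha$ joining $x_1$ to $x_2$ must cross every $A_i$ and every $B_i$. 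Moreover, if $2i+1<\rho r$ then the triangle inequality shows that a path avoiding $B(e,\rho r)$ also avoids $B(p_i,\rho r-2i-1)$, so by Proposition~\ref{pds1} the portion of $\alpha$ running from $A_i$ to $B_i$ has length at least $f_n(\rho r-2i-1)$, where $f_n$ is the universal degree $n$ polynomial supplied by that proposition for the rank $n$ pair $(s,t)$.

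The core step is to turn these local lower bounds into a single global one by showing the relevant sub-paths may be taken pairwise disjoint. Because the far half-spaces of $A_0,B_0,A_1,B_1,\dots$ are nested, a path $\alpha$ from $x_2$ (on the near side of all of them) to $x_1$ (on the far side of all of them) crosses them in this order: letting $\tau_H$ denote the last moment $\alpha$ leaves the near side of a hyperplane $H$, nesting forces $\tau_{A_0}\le\tau_{B_0}\le\tau_{A_1}\le\tau_{B_1}\le\cdots$. The arcs of $\alpha$ on the intervals $[\tau_{A_i},\tau_{B_i}]$ are then pairwise disjoint and each runs from $A_i$ to $B_i$, so summing the estimates of the previous paragraph gives
\[
\ell(\alpha)\ \ge\ \sum_{i=0}^{\lfloor(\rho r-1)/2\rfloor} f_n(\rho r-2i-1)\ \succeq\ (\rho r)^{\,n+1},
\]
since $f_n$ has degree exactly $n$ with positive leading coefficient and there are on the order of $\rho r$ summands. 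Consequently $d_{\rho r}(x_1,x_2)\succeq r^{\,n+1}$, whence $\delta_\rho(r)\succeq r^{\,n+1}$ and $Div(W_\Gamma)$ is bounded below by a polynomial of degree $n+1$.

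I expect the main obstacle to be the combinatorial bookkeeping in the disjointness step: rigorously extracting the monotone crossing pattern from the nesting of half-spaces and confirming that each extracted arc genuinely begins on $A_i$ and ends on $B_i$ (including the handling of repeated crossings and coincident times). A secondary technical point is the verification that the supports of $A_i$ and $B_i$ meet at a vertex $p_i$ with the claimed distance $2i+1$ from $e$, since it is this uniform distance control that allows Proposition~\ref{pds1} to be applied with the same polynomial $f_n$ across the entire nested family.
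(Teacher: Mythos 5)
Your proof is correct and follows essentially the same route as the source of this statement: the paper quotes it from \cite{IL} without reproving it, but the underlying argument there — and the paper's own analogous Proposition~\ref{pl} for subgroup divergence — is exactly your scheme of taking the axis of $st$, the pairwise-disjoint dual hyperplanes of types $s$ and $t$ whose carriers meet along the axis, applying Proposition~\ref{pds1} to each consecutive pair, and summing linearly many degree-$n$ lower bounds over essentially disjoint subpaths to obtain degree $n+1$. Your careful treatment of the nesting/last-crossing-time bookkeeping is in fact more explicit than the corresponding step in the paper's Proposition~\ref{pl}, which chooses crossing points without justifying their monotone order along the avoidant path.
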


\begin{figure}
\begin{tikzpicture}[scale=0.65]

\draw (1,1) node[circle,fill,inner sep=1pt, color=black](1){} -- (2,2) node[circle,fill,inner sep=1pt, color=black](1){}-- (3,1) node[circle,fill,inner sep=1pt, color=black](1){}-- (2,0) node[circle,fill,inner sep=1pt, color=black](1){} -- (1,1) node[circle,fill,inner sep=1pt, color=black](1){}; 

\draw (2,2) node[circle,fill,inner sep=1pt, color=black](1){} -- (4,1) node[circle,fill,inner sep=1pt, color=black](1){}-- (2,0) node[circle,fill,inner sep=1pt, color=black](1){};

\draw (2,2) node[circle,fill,inner sep=1pt, color=black](1){} -- (5,1) node[circle,fill,inner sep=1pt, color=black](1){}-- (2,0) node[circle,fill,inner sep=1pt, color=black](1){};

%\draw (2,2) node[circle,fill,inner sep=1pt, color=black](1){} -- (9,1) node[circle,fill,inner sep=1pt, color=black](1){}-- (2,0) node[circle,fill,inner sep=1pt, color=black](1){};

\draw (2,2) node[circle,fill,inner sep=1pt, color=black](1){} -- (8,1) node[circle,fill,inner sep=1pt, color=black](1){}-- (2,0) node[circle,fill,inner sep=1pt, color=black](1){};

\draw (4,1) node[circle,fill,inner sep=1pt, color=black](1){} -- (3,-1) node[circle,fill,inner sep=1pt, color=black](1){}-- (2,-1) node[circle,fill,inner sep=1pt, color=black](1){};

\draw (5,1) node[circle,fill,inner sep=1pt, color=black](1){} -- (4,-1) node[circle,fill,inner sep=1pt, color=black](1){}-- (3,-1) node[circle,fill,inner sep=1pt, color=black](1){};

\draw (8,1) node[circle,fill,inner sep=1pt, color=black](1){} -- (7,-1) node[circle,fill,inner sep=1pt, color=black](1){}-- (6,-1) node[circle,fill,inner sep=1pt, color=black](1){};

\draw[densely dotted] (7,1) -- (6.5,0);

\draw (6.5,0) -- (6,-1);

\draw[densely dotted] (4,-1) node[circle,fill,inner sep=1pt, color=black](1){}-- (6,-1) node[circle,fill,inner sep=1pt, color=black](1){};

\draw[densely dotted] (5.5,1) -- (7,1) node[circle,fill,inner sep=1pt, color=black](1){};

\draw (1,1) node[circle,fill,inner sep=1pt, color=black](1){} -- (2,-1) node[circle,fill,inner sep=1pt, color=black](1){}-- (3,1) node[circle,fill,inner sep=1pt, color=black](1){};

\node at (2,-0.25) {$b_0$};

\node at (2,2.25) {$a_0$};

\node at (0.75,1) {$b_1$};

\node at (3.3,1) {$a_1$};

\node at (4.3,1) {$t_1$};

\node at (5.3,1) {$t_2$};

\node at (8.4,0.6) {$t_{d-2}$};

%\node at (9.4,1) {$t_{d-2}$};

\node at (2,-1.4) {$b_2$};

\node at (3,-1.4) {$b_3$};

\node at (4,-1.4) {$b_4$};

\node at (6,-1.4) {$b_{d-1}$};

\node at (7,-1.4) {$b_d$};

%\node at (1,3.4) {$c_1$};

%\node at (9,3.4) {$c_2$};

%\draw (1,1) node[circle,fill,inner sep=1pt, color=black](1){} -- (1,3) node[circle,fill,inner sep=1pt, color=black](1){}-- (9,3) node[circle,fill,inner sep=1pt, color=black](1){} -- (9,1) node[circle,fill,inner sep=1pt, color=black](1){};

\draw (2,2) node[circle,fill,inner sep=1pt, color=black](1){} -- (0,1) node[circle,fill,inner sep=1pt, color=black](1){}-- (2,0) node[circle,fill,inner sep=1pt, color=black](1){};

\draw (2,2) node[circle,fill,inner sep=1pt, color=black](1){} -- (-1,1) node[circle,fill,inner sep=1pt, color=black](1){}-- (2,0) node[circle,fill,inner sep=1pt, color=black](1){};

%\draw (2,2) node[circle,fill,inner sep=1pt, color=black](1){} -- (9,1) node[circle,fill,inner sep=1pt, color=black](1){}-- (2,0) node[circle,fill,inner sep=1pt, color=black](1){};

\draw (2,2) node[circle,fill,inner sep=1pt, color=black](1){} -- (-4,1) node[circle,fill,inner sep=1pt, color=black](1){}-- (2,0) node[circle,fill,inner sep=1pt, color=black](1){};

\draw (0,1) node[circle,fill,inner sep=1pt, color=black](1){} -- (1,-1) node[circle,fill,inner sep=1pt, color=black](1){}-- (2,-1) node[circle,fill,inner sep=1pt, color=black](1){};

\draw (-1,1) node[circle,fill,inner sep=1pt, color=black](1){} -- (0,-1) node[circle,fill,inner sep=1pt, color=black](1){}-- (1,-1) node[circle,fill,inner sep=1pt, color=black](1){};

\draw (-4,1) node[circle,fill,inner sep=1pt, color=black](1){} -- (-3,-1) node[circle,fill,inner sep=1pt, color=black](1){}-- (-2,-1) node[circle,fill,inner sep=1pt, color=black](1){};

\draw[densely dotted] (-3,1) -- (-2.5,0);

\draw (-2.5,0) -- (-2,-1);

\draw[densely dotted] (0,-1) node[circle,fill,inner sep=1pt, color=black](1){}-- (-2,-1) node[circle,fill,inner sep=1pt, color=black](1){};

\draw[densely dotted] (-1.5,1) -- (-3,1) node[circle,fill,inner sep=1pt, color=black](1){};

\draw (1,1) node[circle,fill,inner sep=1pt, color=black](1){} -- (2,-1) node[circle,fill,inner sep=1pt, color=black](1){}-- (3,1) node[circle,fill,inner sep=1pt, color=black](1){};

\node at (-0.3,1) {$s_1$};

\node at (-1.3,1) {$s_2$};

\node at (-4.6,0.6) {$s_{d-2}$};

%\node at (9.4,1) {$t_{d-2}$};

%\node at (2,-1.4) {$b_2$};

\node at (1,-1.4) {$a_3$};

\node at (0,-1.4) {$a_4$};

\node at (-2,-1.4) {$a_{d-1}$};

\node at (-3,-1.4) {$a_d$};

\draw (1,1) node[circle,fill,inner sep=1pt, color=black](1){} -- (2,3) node[circle,fill,inner sep=1pt, color=black](1){}-- (3,1) node[circle,fill,inner sep=1pt, color=black](1){}; 

\node at (2,3.25) {$c$};

\node at (2,-2.25) {$\Omega_d$};

%%%%%%%%%%%%%%%%%%%%%%%%%%%%%%%%%%%%%%%%%%%%%%%%%%%%%%%%%%%%%%%%%%%%%%%%%%%%%%%%%%%%%%%%%%%%%%%%%%%%%%%%%%%%%%%%%%%%%%%%%%%%%%%%%%%%%%%

\draw (10,1) node[circle,fill,inner sep=1pt, color=black](1){} -- (11,2) node[circle,fill,inner sep=1pt, color=black](1){}-- (12,1) node[circle,fill,inner sep=1pt, color=black](1){}-- (11,0) node[circle,fill,inner sep=1pt, color=black](1){} -- (10,1) node[circle,fill,inner sep=1pt, color=black](1){}; 

\draw (11,2) node[circle,fill,inner sep=1pt, color=black](1){} -- (13,1) node[circle,fill,inner sep=1pt, color=black](1){}-- (11,0) node[circle,fill,inner sep=1pt, color=black](1){};

\draw (11,2) node[circle,fill,inner sep=1pt, color=black](1){} -- (14,1) node[circle,fill,inner sep=1pt, color=black](1){}-- (11,0) node[circle,fill,inner sep=1pt, color=black](1){};

\draw (11,2) node[circle,fill,inner sep=1pt, color=black](1){} -- (18,1) node[circle,fill,inner sep=1pt, color=black](1){}-- (11,0) node[circle,fill,inner sep=1pt, color=black](1){};

\draw (11,2) node[circle,fill,inner sep=1pt, color=black](1){} -- (17,1) node[circle,fill,inner sep=1pt, color=black](1){}-- (11,0) node[circle,fill,inner sep=1pt, color=black](1){};

\draw (13,1) node[circle,fill,inner sep=1pt, color=black](1){} -- (12,-1) node[circle,fill,inner sep=1pt, color=black](1){}-- (11,-1) node[circle,fill,inner sep=1pt, color=black](1){};

\draw (14,1) node[circle,fill,inner sep=1pt, color=black](1){} -- (13,-1) node[circle,fill,inner sep=1pt, color=black](1){}-- (12,-1) node[circle,fill,inner sep=1pt, color=black](1){};

\draw (17,1) node[circle,fill,inner sep=1pt, color=black](1){} -- (16,-1) node[circle,fill,inner sep=1pt, color=black](1){}-- (15,-1) node[circle,fill,inner sep=1pt, color=black](1){};

\draw[densely dotted] (16,1) -- (15.5,0);

\draw (15.5,0) -- (15,-1);

\draw[densely dotted] (13,-1) node[circle,fill,inner sep=1pt, color=black](1){}-- (15,-1) node[circle,fill,inner sep=1pt, color=black](1){};

\draw[densely dotted] (14.5,1) -- (16,1) node[circle,fill,inner sep=1pt, color=black](1){};

\draw (10,1) node[circle,fill,inner sep=1pt, color=black](1){} -- (11,-1) node[circle,fill,inner sep=1pt, color=black](1){}-- (12,1) node[circle,fill,inner sep=1pt, color=black](1){};

\node at (11,-0.25) {$b_0$};

\node at (11,2.25) {$a_0$};

\node at (9.75,1) {$b_1$};

\node at (12.3,1) {$a_1$};

\node at (13.3,1) {$t_1$};

\node at (14.3,1) {$t_2$};

\node at (17.6,0.5) {$t_{m-3}$};

\node at (18.8,1) {$t_{m-2}$};

\node at (11,-1.4) {$b_2$};

\node at (12,-1.4) {$b_3$};

\node at (13,-1.4) {$b_4$};

\node at (15,-1.4) {$b_{m-2}$};

\node at (16.5,-1.4) {$b_{m-1}$};

%\node at (1,3.4) {$c_1$};

%\node at (9,3.4) {$c_2$};

%\draw (1,1) node[circle,fill,inner sep=1pt, color=black](1){} -- (1,3) node[circle,fill,inner sep=1pt, color=black](1){}-- (9,3) node[circle,fill,inner sep=1pt, color=black](1){} -- (9,1) node[circle,fill,inner sep=1pt, color=black](1){};

\node at (14,-2.25) {$\Gamma_{m-1}$};

\end{tikzpicture}

\caption{}
\label{asecond}
\end{figure}

We now construct right-angled Coxeter groups with join-free subgroups of different subgroup divergence. More precisely, for each $d \geq 3$ let $\Omega_d$ be a graph in Figure \ref{asecond}. We will construct non-virtually cyclic join-free subgroups with subgroup divergence of polynomial of degree $m$ for $2\leq m \leq d$. We remark that the graphs $\Gamma_m$ in Figure \ref{asecond} were introduced by Dani-Thomas \cite{MR3314816} to study divergence of right-angled Coxeter groups and each graph $\Omega_d$ in Figure \ref{asecond} is a variation of the graph $\Gamma_d$. We now prepare some lemmas and propositions that help with the construction of the desired join-free subgroups in $G_{\Omega_d}$.

\begin{lem}
\label{lds2}
For each $d\geq 3$ let $\Omega_d$ be a graph as in Figure \ref{asecond}. For each $3\leq m\leq d$ all pairs $(a_m,b_m)$, $(a_m,c)$, $(b_m,c)$ are rank $m-1$. 
\end{lem}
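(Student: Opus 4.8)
The plan is to reduce all three assertions to a single condition on links and then prove that condition by induction on the rank parameter. By the recursive definition, to certify that a pair $(a_m,x)$ is rank $m-1$ it suffices to check one of the two alternatives, say the one for $a_m$: that \emph{every} non-adjacent pair of vertices in $\Lk(a_m)$ is rank $m-2$. This certificate does not involve the partner $x$ at all, so the same statement about $\Lk(a_m)$ simultaneously handles $(a_m,b_m)$ and $(a_m,c)$ (note that $c$ and $b_m$ are each non-adjacent to $a_m$, since $c$ is joined only to $a_1,b_1$), while the symmetric statement about $\Lk(b_m)$ handles $(a_m,b_m)$ and $(b_m,c)$. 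Thus it is enough to prove $(\dagger_m)$: every non-adjacent pair in $\Lk(a_m)$, and every non-adjacent pair in $\Lk(b_m)$, is rank $m-2$.

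Reading off Figure~\ref{asecond}, the link of an interior bottom vertex is $\Lk(a_m)=\{s_{m-2},a_{m-1},a_{m+1}\}$, with the two boundary modifications that the inward neighbor of $a_3$ is $b_2$ and that $a_d$ has no outward neighbor, and symmetrically for $\Lk(b_m)$. I would not prove $(\dagger_m)$ by induction on $m$ directly, because each of the three pairs in $\Lk(a_m)$ must be certified through the link of one of its own endpoints, and those endpoints are the tail neighbors $a_{m-1}$ and $a_{m+1}$ on \emph{both} sides of $a_m$; an upward or downward induction on $m$ cannot supply both. Instead I would induct on the rank level $k$, proving the statement $P(k)$: for every $m$ with $m\ge k+2$, every non-adjacent pair in $\Lk(a_m)$ (and in $\Lk(b_m)$) is rank $k$. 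The desired $(\dagger_m)$ is exactly $P(m-2)$ for that value of $m$.

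For the base case $P(1)$ I would observe that for each $m\ge 3$ the three pairs in $\Lk(a_m)$ — namely $(s_{m-2},a_{m-1})$, $(s_{m-2},a_{m+1})$ and $(a_{m-1},a_{m+1})$, with $a_2$ read as $b_2$ when $m=3$ — all have $a_m$ as their only common neighbor, hence lie in no induced four-cycle and are rank $1$. For the inductive step $P(k)\Rightarrow P(k+1)$, I would take $m\ge k+3$ and certify each pair of $\Lk(a_m)$ at rank $k+1$ through the appropriate tail neighbor: $(s_{m-2},a_{m-1})$ and $(a_{m-1},a_{m+1})$ through $a_{m-1}$, and $(s_{m-2},a_{m+1})$ through $a_{m+1}$. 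In each case the certifying vertex $a_{m\pm1}$ has index at least $k+2$, so the hypothesis $P(k)$ applies to its link and guarantees that all non-adjacent pairs there are rank $k$, which is precisely what the definition of rank $k+1$ requires. The binding constraint is the inward certificate, which needs $m-1\ge k+2$, i.e. $m\ge k+3$, and this is why $P(k+1)$ holds only for $m\ge (k+1)+2$; the truncated endpoint link $\Lk(a_d)$ and the modified link $\Lk(a_3)$ fit into the same scheme, and an entirely analogous computation gives the statement for the $b$-arm.

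The main subtlety — and the point that dictates the whole setup — is the uniformity built into the definition: certifying rank $k+1$ for $(a_m,\cdot)$ demands that \emph{all} non-adjacent pairs of $\Lk(a_m)$ be rank $k$, not merely one of them, and in particular the degenerate pairs having a single common neighbor must be carried along at every level. Inducting on $k$ rather than on $m$ is exactly what makes both the inward and the outward tail neighbor available as certifying vertices, and it also keeps the recursion confined to the tail vertices $a_j,b_j,s_j,t_j$ with $j\ge 3$; one must check that it never descends to the central four-cycle $a_0a_1b_0b_1$ or to the fan vertices, whose links contain pairs (such as $(a_0,b_0)$) that do not behave like the tail pairs. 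Verifying this confinement, together with the two boundary adjustments, is where the real work of the proof lies; once $P(k)$ is in place, the conclusion for $(a_m,b_m)$, $(a_m,c)$ and $(b_m,c)$ is immediate from $P(m-2)$.
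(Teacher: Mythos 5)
Your proposal is correct and follows essentially the same route as the paper: both reduce the three pairs to the statement that every non-adjacent pair in $\Lk(a_m)$ and $\Lk(b_m)$ is rank $m-2$, and both prove that statement by induction on the rank level (quantified uniformly over all indices at least rank plus two), with the base case coming from the absence of induced four-cycles and the inductive step certified through the two tail neighbors $a_{m-1}$ and $a_{m+1}$. Your explicit handling of the boundary links $\Lk(a_3)$ (whose inward neighbor is $b_2$) and $\Lk(a_d)$, and your explanation of why a direct induction on $m$ cannot work, are elaborations the paper leaves implicit, but the argument is the same.
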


\begin{proof}
We first prove that for $3\leq\ell\leq k\leq d$ each pair of non-adjacent vertices in $\Lk(a_k)$ is rank $\ell-2$. We will prove this by induction on $\ell$. For $\ell=3$ we observe that each pair of non-adjacent vertices in $\Lk(a_k)$ ($k\geq 3$) are not contained in some induced four-cycle of $\Gamma$. Therefore, these pairs are all rank 1. 

Assume that there is $4\leq\ell_0\leq d-1$ such that each pair of non-adjacent vertices in $\Lk(a_k)$ ($\ell_0\leq k\leq d$) is rank $\ell_0-2$. We need to prove that each pair of non-adjacent vertices in $\Lk(a_k)$ ($\ell_0+1\leq k\leq d$) is rank $\ell_0-1$. We observe that $\Lk(a_k)=\{s_{k-2}, a_{k-1}, a_{k+1}\}$ ($\ell_0+1\leq k\leq d-1$). By hypothesis induction, each pair of non-adjacent vertices in $\Lk(a_{k-1})$, $\Lk(a_{k+1})$ is rank $\ell_0-2$. Therefore, all pairs of non-adjacent vertices $(a_{k-1},s_{k-2})$, $(a_{k+1},s_{k-2})$, $(a_{k-1},a_{k+1})$ are rank $\ell_0-1$. In other word, each pair of non-adjacent vertices in $\Lk(a_k)$ ($\ell_0+1\leq k\leq d-1$) is rank $\ell_0-1$. For the case $k=d$ we see that $\Lk(a_k)=\Lk(a_d)=\{a_{d-1},s_{d-2}\}$ and each pair of non-adjacent vertices in $\Lk(a_{d-1})$ is rank $\ell_0-2$ by hypothesis induction. Therefore, pair of non-adjacent vertices in $\Lk(a_{d})$ is rank $\ell_0-1$ by hypothesis induction. Thus, for $3\leq\ell\leq k\leq d$ each pair of non-adjacent vertices in $\Lk(a_k)$ is rank $\ell-2$. In particular, each pair of non-adjacent vertices in $\Lk(a_m)$ ($3\leq m\leq d$) is rank $m-2$. By a similar argument, each pair of non-adjacent vertices in $\Lk(b_m)$ ($3\leq m\leq d$) is rank $m-2$. This implies that for each $3\leq m\leq d$ all pairs $(a_m,b_m)$, $(a_m,c)$, $(b_m,c)$ are rank $m-1$. 
\end{proof}

The following proposition is a direct result of Proposition \ref{pds1} and Lemma \ref{lds2}.

\begin{prop}
\label{pl}
For each $d\geq 3$ and $3\leq m \leq d$ let $\Omega_d$ be the graph as in Figure \ref{asecond} and $H_d^m$ the subgroup of $G_{\Omega_d}$ generated by $c$, $a_m$, and $b_m$. Then the subgroup divergence of $H_d^m$ in $G_{\Omega_d}$ is bounded below by a polynomial of degree $m$.
\end{prop}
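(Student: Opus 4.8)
The goal is to bound the \emph{infimum} $\sigma^n_\rho(r)$ from below, so the argument must apply to \emph{every} admissible pair of points, not merely to one well-chosen family. Write $H=H_d^m=\gen{c,a_m,b_m}$, a special subgroup of $G_{\Omega_d}$, which is undistorted by Proposition~\ref{4.4} and, being special, is isometrically embedded (convex) in $G_{\Omega_d}$. The crucial input from Lemma~\ref{lds2} is that all three pairs $(a_m,b_m)$, $(a_m,c)$, $(b_m,c)$ are rank $m-1$; in particular every pair of \emph{distinct} generators among $c,a_m,b_m$ is a rank $m-1$ pair of non-adjacent vertices of $\Omega_d$. Let $\{\sigma^n_\rho\}$ denote the subgroup divergence of $H$. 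I will show that for each $\rho\in(0,1]$ and each sufficiently large $n$ the function $\sigma^n_\rho(r)$ is bounded below by a fixed polynomial of degree $m$, which by the family domination in the definition of subgroup divergence is exactly the assertion of the proposition.

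First I would fix an \emph{arbitrary} admissible pair $x_1,x_2\in\partial N_r(H)$, that is, with $d(x_1,x_2)\ge nr$ and $d_r(x_1,x_2)<\infty$, and let $y_1,y_2\in H$ be coarse nearest-point projections, so that $d(x_i,y_i)=r$ and hence $d(y_1,y_2)\ge (n-2)r$. Since $H$ is isometrically embedded, a reduced word for $y_1^{-1}y_2$ in the generators $c,a_m,b_m$ is a geodesic $\omega$ of $G_{\Omega_d}$ from $y_1$ to $y_2$, crossing $d(y_1,y_2)$ hyperplanes of $\Sigma_{\Omega_d}$, each of type $c$, $a_m$ or $b_m$. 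Because $\omega$ spells a reduced word in $H\cong\presentation{c,a_m,b_m}{c^2=a_m^2=b_m^2=e}$, consecutive letters are distinct generators; thus $\omega$ determines consecutive pairs of hyperplanes $(W_i,W_i')$ of distinct types. By the previous paragraph each such pair of types forms a rank $m-1$ pair of vertices, and the two hyperplanes share the intervening corner vertex $p_i\in H$ of $\omega$, so their supports intersect at $p_i$.

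Now let $\gamma$ be any path from $x_1$ to $x_2$ lying in the complement of $N_{\rho r}(H)$; I must bound $\ell(\gamma)$ from below. Since each $p_i$ lies on $H$, the path $\gamma$ avoids every ball $B(p_i,\rho r)$. Discarding the at most $2r$ pairs whose hyperplanes lie within distance $r$ of $y_1$ or $y_2$, at least $(n-4)r$ of the remaining hyperplanes separate $x_1$ from $x_2$, so $\gamma$ crosses both $W_i$ and $W_i'$ and therefore contains a subpath running from $W_i$ to $W_i'$ while avoiding $B(p_i,\rho r)$. By Proposition~\ref{pds1} applied to a rank $m-1$ pair, each such subpath has length at least $f_{m-1}(\rho r)$, a polynomial of degree $m-1$ in $r$. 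Carrying out the same summation over the $\Theta(nr)$ separated hyperplane pairs as in the proof of Theorem~7.9 in \cite{IL}---now with the forbidden region $N_{\rho r}(H)$ in place of a single metric ball---upgrades the degree from $m-1$ to $m$ and yields $\ell(\gamma)\ge g_m(r)$ for a polynomial $g_m$ of degree $m$ that does not depend on the chosen pair. Taking the infimum over admissible pairs gives $\sigma^n_\rho(r)\ge g_m(r)$, as required.

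The main obstacle is the \emph{degree boost} from the separation estimate of Proposition~\ref{pds1} (degree $m-1$) to the divergence estimate (degree $m$): this demands the bookkeeping of Levcovitz's argument, showing that the $\Theta(r)$ crossings of rank $m-1$ hyperplane pairs are realized by essentially disjoint subpaths of $\gamma$ whose lengths add, rather than being absorbed into a single short detour. Two supporting points must be handled with equal care. First, the coarse projections $y_i$ and the count of separating hyperplanes must be controlled \emph{uniformly} over all admissible pairs $x_1,x_2$, so that the estimate genuinely bounds the infimum $\sigma^n_\rho$ and not just one family; here the convexity and undistortedness of $H$ (Proposition~\ref{4.4}) are used. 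Second, one must confirm that the discarded end hyperplanes are negligible and that the supports of consecutive hyperplanes of distinct types really do meet at the intervening corner vertex $p_i$, as is needed to invoke Proposition~\ref{pds1}.
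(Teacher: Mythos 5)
Your proposal is correct and takes essentially the same route as the paper's own proof: fix an arbitrary admissible pair, project to $H$, use the convexity of the special subgroup to get a geodesic in the generators $c,a_m,b_m$ whose consecutive dual hyperplanes form rank $m-1$ pairs (Lemma~\ref{lds2}) with carriers meeting at vertices of $H$, force the avoidant path to cross the hyperplanes away from the endpoints, apply Proposition~\ref{pds1} to each subpath between consecutive crossings, and sum over the $\Theta(r)$ crossings to upgrade the degree from $m-1$ to $m$. The bookkeeping step you defer to Levcovitz is done inline in the paper: it picks points $x_i\in H_i\cap\gamma$ and the subpaths of $\gamma$ between consecutive ones, the key point being that the hyperplanes are pairwise disjoint (their types are pairwise non-adjacent) and linearly ordered as separators, so these subpaths contribute essentially disjointly and the lengths add to give the bound $(r-1)f_{m-1}(\rho r)$.
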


\begin{proof}
Let $\{\sigma^n_{\rho}\}$ be the subspace divergence of $H_d^m$ in the Caley graph $\Sigma^{(1)}_{\Omega_d}$. Let $f_{m-1}$ be the polynomials of degree $m-1$ as in Proposition \ref{pds1}. We will prove that for each $n\geq 8$ and $\rho \in (0,1]$
\[\sigma^n_{\rho}(r)\geq (r-1)f_{m-1}(\rho r) \text{ for each $r>1$.}\]

Let $u$ and $v$ be an arbitrary pair of points in $\partial N_r(H)$ such that $d_r(u, v)<\infty$ and $d_S(u,v)\geq nr$. Let $\gamma$ be an arbitrary path that lies outside the $\rho r$--neighborhood of $H$ connecting $u$ and $v$. We will prove that the length of $\gamma$ is bounded below by $(r-1)f_{m-1}(\rho r)$.

Let $\gamma_1$ be a geodesic of length $r$ in $\Sigma^{(1)}_{\Omega_d}$ connecting $u$ and some point $x$ in $H_d^m$. Let $\gamma_2$ be another geodesic of length $r$ in $\Sigma^{(1)}_{\Omega_d}$ connecting $v$ and some point $y$ in $H_d^m$. Let $\alpha$ be a geodesic in $\Sigma^{(1)}_{\Omega_d}$ connecting $x$ and $y$. Obviously, each edge of $\alpha$ is labelled by $a_m$, $b_m$, or $c$. This implies that two hyperplanes determined by two different edges in $\alpha$ do not intersect. Since $d_S(x,y)\geq d_S(u,v)-2r\geq (n-2)r\geq 6r$, there is a subpath $\beta$ with length bounded below by $r$ of $\alpha$ such that $\beta\cap\bigl(B(x,2r)\cup B(y,2r)\bigr)=\emptyset$. Also, the lengths of $\gamma_1$ and $\gamma_2$ are both $r$. This implies that each hyperplane determined by edge in $\beta$ does not intersect $\gamma_1\cup\gamma_2$. Therefore, each hyperplane determined by edge in $\beta$ must intersect $\gamma$.

Assume that the path $\beta$ is the concatenation of edges $e_1, e_2,\cdots, e_k$, $k\geq r$ and let $H_i$ be the hyperplane determined by edge $e_i$. Therefore, for each $i \in \{1,2,\cdots, k-1\}$ the $(i+1)^{th}$ vertex $p_i$ of $\beta$ lies in the intersection of the support of $H_i$ and $H_{i+1}$. For each $i\in \{1,2,\cdots, k\}$ let $x_i$ be a point in $H_i\cap \gamma$. Let $\gamma_i$ be the subpath of $\gamma$ connecting $x_i$ and $x_{i+1}$ for each $i \in \{1,2,\cdots, k-1\}$. Therefore, each $\gamma_i$ is a path from $H_i$ to $H_{i+1}$ which avoids the ball $B(p_i,\rho r)$. Therefore, the length of each $\gamma_i$ is bounded below by $f_{m-1}(\rho r)$ by Proposition \ref{pds1} and Lemma \ref{lds2}. This implies that the length of $\gamma$ is bounded below by $(k-1)f_{m-1}(\rho r)$. Also, $k\geq r$. Therefore, the length of $\gamma$ is bounded below by $(r-1)f_{m-1}(\rho r)$. Thus, $\sigma^n_{\rho}(r)\geq (r-1)f_{m-1}(\rho r)$. Therefore, the subgroup divergence of $H_d^m$ in $G_{\Omega_d}$ is bounded below by a polynomial of degree $m$. 
\end{proof}

The following lemma contributes to the proof of the upper bound of our subgroup divergences.

\begin{lem}
\label{nl}
For each $d\geq 3$ and $3\leq m\leq d$ there is a polynomial $g_{m-1}$ of degree $m-1$ such that the following holds. Let $\alpha$ be a geodesic ray based at $e$ that is labelled by $a_1b_1a_1b_1\cdots$. Let $\beta$ be a geodesic ray based at $e$ that is labelled by $b_{m-1}t_{m-2}b_{m-1}t_{m-2}\cdots$. Then for each $r>0$ there is a path outside $N_r(H_d^m)$ connecting $\alpha(r)$ and $\beta(r)$ with length bounded above by $g_{m-1}(r)$. 
\end{lem}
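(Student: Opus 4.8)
The plan is to read the exponent $m-1$ as one more than the rank of the pair $(b_{m-1},t_{m-2})$, and to build the path by adapting the upper-bound (divergence) construction of Dani--Thomas~\cite{MR3314816} and Levcovitz~\cite{IL} to the neighbourhood of the subgroup $H_d^m$ rather than of a basepoint. First I would note, using the adjacencies $b_m\sim b_{m-1}$ and $b_m\sim t_{m-2}$, that $b_{m-1}$ and $t_{m-2}$ are non-adjacent vertices of $\Lk(b_m)$, so by Lemma~\ref{lds2} the pair $(b_{m-1},t_{m-2})$ is rank $m-2$; the divergence detour around a rank $n$ pair is a polynomial of degree $n+1$, which is exactly the degree $m-1$ required of $g_{m-1}$. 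I would also fix explicit endpoints, namely (up to a bounded error) $\alpha(r)=(a_1b_1)^{\lfloor r/2\rfloor}$ and $\beta(r)=(b_{m-1}t_{m-2})^{\lfloor r/2\rfloor}$, and record the elementary but crucial fact that $b_{m-1},t_{m-2}$ have the common neighbour $b_m$ while $a_1,b_1$ have the common neighbour $a_0$, so that the powers $(b_{m-1}t_{m-2})^k$ and $(a_1b_1)^k$ are genuine join words. This is the hook on which the distance estimates below will hang.

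For the path itself I would induct on $m$, tracing the recursive chain of induced four-cycles of $\Omega_d$ that runs from the innermost four-cycle $\{a_0,b_0,a_1,b_1\}$ outward through $\St(b_m)$. In the base case $m=3$ the pair $(b_2,t_1)$ is rank $1$, the two innermost four-cycles span a fan, and one connects $\alpha(r)$ to $\beta(r)$ by a quadratic path staying outside $N_r(H_d^3)$, exactly as in the quadratic detour already constructed in Lemma~\ref{ipt}. For the inductive step I would take the degree $(m-2)$ path supplied by the hypothesis around the rank $(m-3)$ pair $(b_{m-2},t_{m-3})$ and wrap it inside one additional winding that carries the $(b_{m-2},t_{m-3})$-direction to the $(b_{m-1},t_{m-2})$-direction through the link of $b_m$. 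This is the standard step of the Dani--Thomas construction, and it multiplies the length by one further factor of $r$, so the degree rises from $m-2$ to $m-1$ and a single universal polynomial $g_{m-1}$ of degree $m-1$, depending only on $d$ and $m$, dominates the total length.

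The step I expect to be the crux is verifying that the whole path lies \emph{outside} $N_r(H_d^m)$, a larger set than the ball about a basepoint controlled in the pure divergence setting. Here I would argue exactly as in Lemma~\ref{ltran}, using that $H_d^m$ is $N$--join-busting by Proposition~\ref{prop2}: every vertex $x$ along the path is, by construction, a product of a bounded prefix with a high power of a diagonal pair, and each such power is a join subword of length comparable to $r$ (this is where the common-neighbour observation of the first paragraph enters). If some $h\in H_d^m$ satisfied $\abs{x^{-1}h}_S=m_0<r$, then $h$ would be represented by a reduced word containing a join subword of length at least (the length of the embedded power) $-\,2m_0$, which for $m_0<r$ exceeds $N$ and contradicts $N$--join-busting; hence $d_S(x,H_d^m)\geq r$ for every path vertex.

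The one genuinely delicate point in this verification is the innermost end, where $c\in H_d^m$ commutes with both $a_1$ and $b_1$ and so runs alongside the $\alpha$-direction. To keep $c$ from corrupting the join-busting estimate I would route the first leg of the path through the bridging vertex $b_2$, so that the relevant join subwords are supported away from $c$; a direct computation, already visible for the endpoint $\alpha(r)$ itself (prefixing $c$ only lengthens the reduced word), then shows that $c$ never brings $H_d^m$ within distance $r$ of the path. Assembling the length bound of the second paragraph with the distance bound of the third completes the proof.
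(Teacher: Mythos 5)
Your overall framing (run the Dani--Thomas upper-bound construction inside the subgraph $\Gamma_{m-1}$, then certify that the resulting path misses $N_r(H_d^m)$) matches the first half of the paper's proof, but the certification step --- which you correctly identify as the crux --- has a genuine gap, and it is exactly where the paper does something different. You propose to certify $d_S(x,H_d^m)\geq r$ for each path vertex $x$ by a join-busting estimate in the style of Lemma~\ref{ltran}. Two things go wrong. First, quantitatively: a join-busting count can never give distance $\geq r$ for a vertex whose certificate is a join power of length comparable to $r$. Already for the endpoint $\alpha(r)=(a_1b_1)^{r/2}$, a single join word of length $r$, the Lemma~\ref{ltran}-type estimate gives only $d_S\bigl(\alpha(r),H_d^m\bigr)\geq \frac{r}{N+1}-O(1)$, and your inequality ``a reduced word for $h$ contains a join subword of length at least (power length) $-2m_0$, which for $m_0<r$ exceeds $N$'' is false: for power length $\approx r$ that quantity is below $N$ whenever $m_0\geq (r-N)/2$, and negative for $m_0>r/2$. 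So at best you show the path avoids $N_{r/(N+1)-C}(H_d^m)$, not $N_r(H_d^m)$ as the lemma demands. Second, and more seriously, structurally: the bound of Lemma~\ref{ltran} degrades linearly in the length of the prefix $g$, and in the recursive degree-$(m-1)$ construction the vertices are not of the form ``bounded prefix times one high power''; prefixes of vertices deep in the recursion grow (polynomially) with $r$, so the join-busting lower bound becomes vacuous there. Forcing each power to dominate $K\cdot(\text{prefix length})$ to repair this would inflate the path length beyond degree $m-1$. (Your base case also cannot literally invoke Lemma~\ref{ipt}, since $\Omega_d$ is not $\mathcal{CFS}$.)

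The paper sidesteps all of this with one idea your proposal is missing: the retraction homomorphism $\Phi\colon G_{\Omega_d}\to G_{\Gamma_{m-1}}$ sending each vertex of $\Gamma_{m-1}$ to itself and every other vertex (in particular $c$, $a_m$, $b_m$) to $e$. This $\Phi$ is well defined because $\Gamma_{m-1}$ is an induced subgraph, satisfies $\abs{\Phi(g)}_{S'}\leq\abs{g}_S$, restricts to the identity on $G_{\Gamma_{m-1}}$, and kills $H_d^m$. Hence for any $u\in G_{\Gamma_{m-1}}$ and $h\in H_d^m$ we get $\abs{h^{-1}u}_S\geq\abs{\Phi(h^{-1}u)}_{S'}=\abs{u}_{S'}$, i.e.\ $d_S(u,H_d^m)\geq \abs{u}_{S'}$. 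Consequently any path in $\Sigma^{(1)}_{\Gamma_{m-1}}$ avoiding the ball $B(e,r)$ --- which Dani--Thomas supply as a black box with length at most a degree-$(m-1)$ polynomial, so no inductive re-derivation is needed --- automatically avoids $N_r(H_d^m)$, regardless of the structure of its vertices. Once this retraction is in place, your worries about $c$ commuting with $a_1,b_1$ evaporate (since $\Phi(c)=e$), and your rank computation for $(b_{m-1},t_{m-2})$ is not needed either: ranks only yield lower divergence bounds, whereas this lemma is an upper-bound statement. I would rebuild your argument around the retraction.
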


\begin{proof}
Let $\Gamma_{m-1}$ be a subgraph of $\Omega_d$ as in Figure \ref{asecond}. Let $S$, $S'$ be vertex sets of $\Omega_d$, $\Gamma_{m-1}$ respectively. Obviously, $\alpha$ and $\beta$ be two geodesic rays in the 1-skeleton $\Sigma^{(1)}_{\Gamma_{m-1}}$ of the Davis complex $\Sigma_{\Gamma_{m-1}}$. Since the Cayley graph $\Sigma^{(1)}_{\Gamma_{m-1}}$ has geodesic extension property and the divergence of $\Sigma^{(1)}_{\Gamma_{m-1}}$ is a polynomial of degree $m-1$ (see Section 5 in \cite{MR3314816}), there is a polynomial $g_{m-1}$ of degree $m-1$ such that for each $r>0$ there is a path $\gamma_r$ in $\Sigma^{(1)}_{\Gamma_{m-1}}$ with length bounded above by $g_{m-1}(r) $ connecting $\alpha(r)$, $\beta(r)$ and $\gamma_r$ avoids the ball $B(e,r)$ in $\Sigma^{(1)}_{\Gamma_{m-1}}$. We now prove that each $\gamma_r$ also lies outside $N_r(H_d^m)$ by showing that its vertices lie outside $N_r(H_d^m)$.

Let $\Phi\!:G_{\Omega_d}\to G_{\Gamma_{m-1}}$ be the group homomorphism induced by mapping each vertex of $\Gamma_{m-1}$ to itself and each vertex outside $\Gamma_{m-1}$ to $e$. It is not hard to check the following:
\begin{enumerate}
\item The map $\Phi$ is a well-defined group homomorphism.
\item $\Phi(u)=u$ for each $u$ in $G_{\Gamma_{m-1}}$ and $\Phi(h)=e$ for each $h$ in $H_d^m$.
\item $\abs{\Phi(g)}_{S'}\leq\abs{g}_S$ for each $g$ in $G_{\Omega_d}$.
\end{enumerate}

For each vertex $u$ in $\gamma_r$, $u$ is a group element in $G_{\Gamma_{m-1}}$ with $\abs{u}_{S'}\geq r$. Assume that $m=d_S(u,H_d^m)$. Then there is $h$ in $H_d^m$ such that $m=d_S(h,u)=\abs{h^{-1}u}_S$. Therefore, \[m=\abs{h^{-1}u}_S\geq\abs{\Phi(h^{-1}u)}_{S'}=\abs{u}_{S'}\geq r.\] This implies that each vertex in $\gamma_r$ lies outside $N_r(H_d^m)$. Therefore, each path $\gamma_r$ also lie outside $N_r(H_d^m)$.

\end{proof}

\begin{prop}
\label{pu}
For each $d\geq 3$ and $3\leq m \leq d$ let $\Omega_d$ be the graph as in Figure \ref{asecond} and $H_d^m$ the subgroup of $G_{\Omega_d}$ generated by $c$, $a_m$, and $b_m$. Then the subgroup divergence of $H_d^m$ in $G_{\Omega_d}$ is bounded above by a polynomial of degree $m$.
\end{prop}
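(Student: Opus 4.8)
The plan is to bound the subgroup divergence family $\{\sigma^n_\rho\}$ of $H=H_d^m$ from above by exhibiting, for each radius $r$ and each $n\ge 2$, $\rho\in(0,1]$, a single pair of points on $\partial N_r(H)$ that are at least $nr$ apart in $G_{\Omega_d}$ yet can be joined by a path of length $O(r^m)$ avoiding $N_{\rho r}(H)$. Since $\sigma^n_\rho(r)$ is defined as an infimum over all such admissible pairs, producing one good pair for each $r$ suffices, and the resulting estimate $\sigma^n_\rho(r)\le C(n)\,r^m$ yields $\{\sigma^n_\rho\}\preceq r^m$.

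The backbone element is $h=cb_m\in H$: since $c$ is adjacent only to $a_1,b_1$ while $m\ge 3$, the vertices $c$ and $b_m$ are non-adjacent, so $h$ has infinite order. I would set $x_1=\alpha(r)$ and $x_2=h^k\alpha(r)$, where $\alpha$ is the ray of Lemma~\ref{nl} labelled $a_1b_1a_1b_1\cdots$. Both points lie on $\partial N_r(H)$: applying the retraction $\Phi\colon G_{\Omega_d}\to G_{\Gamma_{m-1}}$ from the proof of Lemma~\ref{nl}, which fixes $a_1,b_1$ and sends every generator of $H$ to $e$, gives $\Phi(x_i)=\alpha(r)$, whence $d_S(x_i,H)\ge\abs{\alpha(r)}_{S'}=r$, while trivially $d_S(x_i,H)\le r$. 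Because a cyclic subgroup of a $\CAT(0)$ group is undistorted, $d_S(x_1,x_2)\ge\abs{h^k}_S-2r\ge k/L_1-L_1-2r$, so choosing $k$ of size $O(r)$ forces $d_S(x_1,x_2)\ge nr$.

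To join $x_1$ and $x_2$ outside $N_r(H)$ (a fortiori outside $N_{\rho r}(H)$), I would insert the $2k$ letters of $h^k=(cb_m)^k$ one at a time, always on the side adjacent to the fixed ray $\alpha(r)$ and always keeping the growing prefix $g$ inside $H$. Two commutation facts make each insertion cheap. First, $\Lk(c)=\{a_1,b_1\}$, so $c$ commutes with every letter of $\alpha$; hence $g\alpha(r)$ and $gc\alpha(r)=g\alpha(r)c$ are joined by a single edge, still at distance $r$ from $H$ by the $\Phi$-argument. Second, $\{b_{m-1},t_{m-2}\}\subseteq\Lk(b_m)$, so $b_m$ commutes with the second ray $\beta$ of Lemma~\ref{nl}; to pass from $g\alpha(r)$ to $gb_m\alpha(r)$ I travel $g\cdot P_\beta$ to $g\beta(r)$, cross the single edge to $g\beta(r)b_m=gb_m\beta(r)$, and return along $gb_m\cdot P_\beta^{-1}$ to $gb_m\alpha(r)$, where $P_\beta$ is the detour of Lemma~\ref{nl} of length $g_{m-1}(r)$ with $g_{m-1}$ of degree $m-1$. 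Since $g,gb_m\in H$, all these pieces avoid $gN_r(H)=gb_mN_r(H)=N_r(H)$. Concatenating the $2k$ insertions gives a path from $\alpha(r)$ to $h^k\alpha(r)$ of length $O\!\bigl(k\,g_{m-1}(r)\bigr)=O\!\bigl(r\cdot r^{m-1}\bigr)=O(r^m)$, so $\sigma^n_\rho(r)\le d_{\rho r}(x_1,x_2)=O(r^m)$.

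The hard part is the bookkeeping that keeps every stage outside $N_r(H)$. The tempting shortcut---finding one path from $\alpha(r)$ to $h\alpha(r)$ and left-translating it by the powers $h^i$---fails because $c$ and $b_m$ do not commute, so one cannot slide one generator past the other; this forces the letter-by-letter construction above, in which the prefix $g$ is deliberately kept in $H$ so that left translation by $g$ preserves $N_r(H)$, and in which the two commutation relations are exactly what render each append either a single edge or a single degree-$(m-1)$ detour. A minor technical point is that a one-edge excursion dips only to distance $r-\tfrac12$ from $H$; for fixed $\rho<1$ this still exceeds $\rho r$ once $r$ is large, and the boundary case is absorbed by the coarse equivalence $\sim$ defining the divergence. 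Together with the degree-$m$ lower bound of Proposition~\ref{pl}, this establishes that the subgroup divergence of $H_d^m$ in $G_{\Omega_d}$ is exactly a polynomial of degree $m$.
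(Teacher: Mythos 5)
Your proof is correct and follows essentially the same route as the paper's: both use the rays $\alpha$, $\beta$ and the degree-$(m-1)$ detour $\gamma_r$ of Lemma~\ref{nl}, take the pair $\alpha(r)$ and $h^k\alpha(r)$ for the backbone element $h$ (the paper uses $b_mc$, you use $cb_m$) with $k\approx nr$, and splice together $k$ excursions, each consisting of a single $c$-step plus a $b_m$-step routed through $\beta$ via $\gamma_r$, giving a path of length $O\bigl(nr\cdot g_{m-1}(r)\bigr)=O(nr^m)$ outside $N_r(H)$. One correction to your commentary: the ``tempting shortcut'' you dismiss---building one path $\eta_1$ from $\alpha(r)$ to $h\alpha(r)$ and left-translating it by the powers $h^i$---is exactly what the paper does, and it is valid: since $h^i\in H$, left multiplication by $h^i$ preserves $N_r(H)$ setwise, no commutation between $c$ and $b_m$ is needed, and your letter-by-letter construction with prefixes $g\in H$ is precisely this translation argument unrolled. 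Also, your worry about a one-edge excursion dipping to distance $r-\tfrac12$ is unfounded in the Cayley graph metric (an interior point of an edge can only reach $H$ through one of the edge's endpoints, so it is at distance $\geq r$ when both endpoints are), and the paper sidesteps the issue anyway by routing such steps around a $2$-cell with paths of length at most $3$.
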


\begin{proof}
Let $\alpha$, $\beta$ be geodesic rays as in Lemma \ref{nl} and $g_{m-1}$ a polynomial of degree $m-1$ as in this lemma. Let $\{\sigma^n_{\rho}\}$ be the subspace divergence of $H_d^m$ in the Cayley graph $\Sigma^{(1)}_{\Omega_d}$. We will prove that function $\sigma^n_{\rho}(r)$ is bounded above by some quadratic function for each $n\geq 2$ and $\rho \in (0,1]$.

For each $r>1$ there is a path $\gamma_r$ outside $N_r(H_d^m)$ connecting $\alpha(r)$ and $\beta(r)$ with length bounded above by $g_{m-1}(r)$. Since the generator $b_m$ commutes with all edge labels of $\beta$, two points $\beta(r)$ and $b_m\beta(r)$ lie on the boundary of a 2-cell in $\Sigma_{\Omega_d}$. Therefore, there is a path $\alpha_1$ outside $N_r(H_d^m)$ connecting $\beta(r)$ and $b_m\beta(r)$ with length bounded above by 3. Similarly, the generator $c$ commutes with all edge labels of $\alpha$, two points $b_m\alpha(r)$ and $(b_mc)\alpha(r)$ lie on the boundary of a 2-cell in $\Sigma_{\Omega_d}$. Therefore, there is a path $\alpha_2$ outside $N_r(H_d^m)$ connecting $b_m\alpha(r)$ and $b_mc\alpha(r)$ with length bounded above by 3. Also $b_m\gamma_r$ is a path outside $N_r(H_d^m)$ connecting $b_m\alpha(r)$ and $b_m\beta(r)$ with length bounded above by $g_{m-1}(r)$. Therefore, $\eta_1=\gamma_r\cup\alpha_1\cup b_m\gamma_r\cup\alpha_2$ is a path outside $N_r(H_d^m)$ connecting $\alpha(r)$ and $(b_mc)\alpha(r)$ with length bounded above by $2g_{m-1}(r)+6$.

For each $n\geq 2$, let $k$ be an integer between $nr$ and $2nr$. Let \[\eta=\eta_1\cup(b_mc)\eta_1\cup(b_mc)^2\eta_1 \cup \cdots\cup(b_mc)^{k-1}\eta_1.\] Then, $\eta$ is a path outside $N_r(H_d^m)$ connecting $\alpha(r)$ and $(b_mc)^k\alpha(r)$ with length bounded above by $k\bigl(2g_{m-1}(r)+6\bigr)$. Therefore, \[d_{\rho r}\bigl(\alpha(r), (b_mc)^k\alpha(r)\bigr)\leq k\bigl(2g_{m-1}(r)+6\bigr)\leq 2nr\bigl(2g_{m-1}(r)+6\bigr)\]

Also, \[d_S\bigl(\alpha(r), (b_mc)^k\alpha(r)\bigr)\geq d_S\bigl(e,(b_mc)^k\bigr)-2r\geq 2k-2r\geq (2n-2)r\geq nr.\]
Therefore, $\sigma^n_{\rho}(r)\leq 2nr\bigl(2g_{m-1}(r)+6\bigr)$. This implies that the subgroup divergence of $H$ in $G_{\Omega_d}$ is bounded above by a polynomial of degree $m$.
\end{proof}

By using similar techniques as in Lemma \ref{nl} and Proposition \ref{pu}, we also obtain the following proposition.

\begin{prop}
\label{puq}
For each $d\geq 3$ let $\Omega_d$ be the graph as in Figure \ref{asecond} and $H_d^2$ the subgroup of $G_{\Omega_d}$ generated by $c$, $s_1$, and $t_1$. Then the subgroup divergence of $H_d^2$ in $G_{\Omega_d}$ is exactly a quadratic function.
\end{prop}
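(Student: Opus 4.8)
\section*{Proof proposal for Proposition~\ref{puq}}

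The plan is to squeeze the subgroup divergence of $H_d^2$ between two quadratic functions, reusing verbatim the machinery already assembled for the higher-degree cases. For the lower bound I would invoke stability: the defining involutions $c$, $s_1$, $t_1$ are pairwise non-adjacent and generate a join-free subgroup, so Propositions~\ref{prop2} and~\ref{prop3} make $H_d^2$ stable, whence the work of Cashen and Theorem~D of \cite{JDT2018} quoted above force the subgroup divergence to be at least quadratic. Alternatively one can run the hyperplane-counting argument of Proposition~\ref{pl} with $m=2$: the relevant pairs among $c$, $s_1$, $t_1$ are rank $1$, so Proposition~\ref{pds1} supplies a \emph{linear} separation polynomial $f_1$, and the same estimate as in Proposition~\ref{pl} gives $\sigma^n_\rho(r)\ge (r-1)f_1(\rho r)$, which is quadratic.

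For the upper bound I would imitate Lemma~\ref{nl} and Proposition~\ref{pu} using the degree-one piece, namely the central four-cycle $\Gamma_1$ on $\{a_0,b_0,a_1,b_1\}$, whose right-angled Coxeter group $G_{\Gamma_1}\cong D_\infty\times D_\infty$ has linear divergence. Concretely, let $\alpha$ be the geodesic ray labelled $a_1b_1a_1b_1\cdots$ and let $\beta$ be the geodesic ray labelled $a_0b_0a_0b_0\cdots$; both lie in $\Sigma^{(1)}_{\Gamma_1}$ and run along the two $D_\infty$-axes of this flat. As in Lemma~\ref{nl}, the retraction $\Phi\colon G_{\Omega_d}\to G_{\Gamma_1}$ sending every vertex outside $\Gamma_1$ to $e$ kills $c$, $s_1$ and $t_1$, so $\Phi(H_d^2)=\{e\}$, and the inequality $\abs{\Phi(g)}\le\abs{g}$ guarantees that any path built inside $\Sigma^{(1)}_{\Gamma_1}$ which avoids $B(e,r)$ in fact lies outside $N_r(H_d^2)$. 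Since $G_{\Gamma_1}$ has linear divergence, $\alpha(r)$ and $\beta(r)$ may be joined by such a path $\gamma_r$ of length linear in $r$.

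I would then splice $\gamma_r$ into a short loop exactly as in Proposition~\ref{pu}. Because $c$ commutes with $a_1$ and $b_1$, the points $\alpha(r)$ and $c\,\alpha(r)$ are joined by a path of bounded length outside $N_r(H_d^2)$, and because $s_1$ commutes with $a_0$ and $b_0$, the points $\beta(r)$ and $s_1\beta(r)$ are likewise joined by a bounded path; concatenating $\gamma_r$, these two detours and the translate $s_1\gamma_r$ yields a path $\eta_1$ outside $N_r(H_d^2)$ from $\alpha(r)$ to $(s_1c)\alpha(r)$ of length linear in $r$. Stacking $k$ translates with $k$ between $nr$ and $2nr$, the union $\eta=\bigcup_{i=0}^{k-1}(s_1c)^i\eta_1$ stays outside $N_r(H_d^2)$ (each $(s_1c)^i\in H_d^2$ preserves $N_r(H_d^2)$), connects $\alpha(r)$ to $(s_1c)^k\alpha(r)$, and has total length of order $nr\cdot r$. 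As $(s_1c)^k$ displaces $\alpha(r)$ by at least $nr$, this gives $\sigma^n_\rho(r)$ bounded above by a quadratic function, and combining with the previous paragraph shows the divergence is exactly quadratic.

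The step I expect to be the main obstacle is the upper bound, and inside it the claim that the linear-length connector $\gamma_r$, manufactured purely inside the flat $G_{\Gamma_1}$, truly lies outside the $r$-neighborhood of the \emph{entire} subgroup $H_d^2$ rather than merely outside a ball in $\Sigma^{(1)}_{\Gamma_1}$. This is precisely where the retraction $\Phi$ and the distance estimate $\abs{\Phi(g)}\le\abs{g}$ carry the weight, exactly as in the proof of Lemma~\ref{nl}, and verifying it for the generators $c$, $s_1$, $t_1$ is the only point requiring care beyond transcribing the higher-degree argument.
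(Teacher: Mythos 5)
Your upper bound is fine, and it is exactly what the paper's (one-line) proof intends: the paper disposes of Proposition~\ref{puq} by saying it follows from ``similar techniques as in Lemma~\ref{nl} and Proposition~\ref{pu}'', which is precisely your construction --- retract onto the four-cycle $\Gamma_1$, use the linear divergence of $G_{\Gamma_1}\cong D_\infty\times D_\infty$, and stack $(s_1c)$--translates of the connector. The genuine gap is in your lower bound, and it is not repairable, because both of your routes rest on claims about the pair $(s_1,t_1)$ that are false in $\Omega_d$. Reading Figure~\ref{asecond}, the vertices $s_1$ and $t_1$ are \emph{each adjacent to both} $a_0$ and $b_0$, so $(s_1,a_0,t_1,b_0)$ is an induced four-cycle. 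Consequently $(s_1,t_1)$ is not a rank~$1$ pair, so Proposition~\ref{pds1} gives no estimate for paths joining an $s_1$--hyperplane to a $t_1$--hyperplane, and the Proposition~\ref{pl}--style hyperplane count collapses exactly at consecutive edges of the geodesic labelled $s_1,t_1$; indeed such hyperplanes run parallel along the flat $\langle s_1,t_1\rangle\times\langle a_0,b_0\rangle$ and can be joined by uniformly short paths arbitrarily far from any ball. For the same reason $H_d^2$ is \emph{not} join-free: $s_1t_1$ is an infinite-order element lying in the join subgroup of that four-cycle, and $d_\Gamma(s_1,t_1)=2$ violates condition (1) of the paper's own Proposition~\ref{exoplgips}. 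Hence Propositions~\ref{prop2} and~\ref{prop3} do not apply to $H_d^2$, and the stability route via Cashen and Theorem D of \cite{JDT2018} is unavailable as well.

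In fact no lower-bound argument can succeed, because the quadratic lower bound asserted in Proposition~\ref{puq} is false; the error originates in the paper, whose final theorem claims ``by a similar argument'' that $H_d^2$ is join-busting, which fails for the pair $(s_1,t_1)$. Concretely, let $\Phi'\colon G_{\Omega_d}\to G_{\{a_0,b_0\}}$ be the retraction killing every generator except $a_0,b_0$; then $\Phi'(H_d^2)=\{e\}$ and $\abs{\Phi'(g)}\leq\abs{g}_S$, so $d_S(g,H_d^2)\geq\abs{\Phi'(g)}$ for all $g$. Take $r=2i$, and set $x=(a_0b_0)^{i}$ and $y=(s_1t_1)^{k}(a_0b_0)^{i}$ with $k=\lceil nr/2\rceil$. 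Both points are at distance exactly $r$ from $H_d^2$, and $d_S(x,y)=2k\geq nr$. Since $s_1$ and $t_1$ commute with $a_0$ and $b_0$, the edge-path whose vertices are $w(a_0b_0)^{i}$, as $w$ runs over the prefixes of the reduced word $(s_1t_1)^{k}$, joins $x$ to $y$ and has length $2k$; every vertex maps under $\Phi'$ to $(a_0b_0)^{i}$, hence stays at distance at least $r$ from $H_d^2$. Therefore $\sigma^n_\rho(r)$ is bounded above by (roughly) $nr$ for every $\rho$, i.e.\ the subgroup divergence of $H_d^2=\langle c,s_1,t_1\rangle$ is linear, not quadratic. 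So the statement you were asked to prove cannot be proved as written; any correct version must choose three involutions no two of which share a pair of non-adjacent common neighbours, which is exactly the condition your ``rank~$1$'' claim silently assumed.
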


We are now ready for the main theorem in this section.

\begin{thm}
For each $d\geq 3$ let $\Omega_d$ be the graph as in Figure \ref{asecond}. Let $H_d^2$ be the subgroup of $G_{\Omega_d}$ generated by the set $\{c,s_1,t_1\}$. For each $3\leq m \leq d$ let $H_d^m$ the subgroup of $G_{\Omega_d}$ generated by the set $\{c,a_m,b_m\}$. Then for each $2\leq m\leq d$ subgroup $H_d^m$ is a join-free subgroup of $G_{\Omega_d}$, $H_d^m$ is isomorphic to the group $F=\presentation{s,t,u}{s^2=t^2=u^2=e}$, and the subgroup divergence of $H_d^m$ in $G_{\Omega_d}$ is a polynomial of degree $m$.
\end{thm}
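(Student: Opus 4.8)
The plan is to prove the three asserted properties of $H_d^m$ in turn, exploiting that each $H_d^m$ is a \emph{special} subgroup of $G_{\Omega_d}$: it is generated by three vertices of $\Omega_d$, namely $c,s_1,t_1$ when $m=2$ and $c,a_m,b_m$ when $3\le m\le d$. First I would pin down the isomorphism type. Reading the adjacencies of $\Omega_d$ off Figure~\ref{asecond}, the three chosen generators are pairwise non-adjacent, so the induced subgraph on the generating triple has no edges. A special subgroup on an edgeless triple of vertices is the free product of three copies of $\Z_2$, which is exactly $F=\presentation{s,t,u}{s^2=t^2=u^2=e}$. This is the same structural observation made at the end of the proof of Proposition~\ref{exoplgips}.

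Next I would establish join-freeness via Proposition~\ref{exoplgips}. Since $\Omega_d$ is connected and, for $d\ge 3$, not a join, that proposition applies to the special subgroup $H_d^m=G_{S_1}$ (with $S_1$ the generating triple) and reduces join-freeness to checking condition~(1): that $S_1$ contains two non-adjacent vertices, which is immediate from the previous paragraph, and that the distance in $\Omega_d$ between any two elements of $S_1$ is different from $2$. The point of the distance-$\ne 2$ requirement is precisely that a pair of generators at distance $2$ would have a common neighbour $v$, whence their product would be an infinite-order element of the star subgroup $G_{\St(v)}$, violating join-freeness. I would therefore verify directly in $\Omega_d$ that no two of the three generators share a common neighbour, so that (being non-adjacent) each pairwise distance is at least $3$: the vertex $c$ is adjacent only to $a_1$ and $b_1$, so its distance-$2$ sphere is a short explicit list avoiding the other two generators, while the two generators lying in the arms of $\Omega_d$ are separated by the central four-cycle and its attached fans and so have disjoint neighbourhoods. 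With condition~(1) verified, Proposition~\ref{exoplgips} yields that $H_d^m$ is join-free.

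For the divergence statement I would simply assemble the estimates already proved. For $3\le m\le d$, Proposition~\ref{pl} gives that the subgroup divergence of $H_d^m$ in $G_{\Omega_d}$ is bounded below by a polynomial of degree $m$, and Proposition~\ref{pu} gives the matching upper bound of degree $m$; together these force the subgroup divergence to be exactly a polynomial of degree $m$. For the remaining case $m=2$, Proposition~\ref{puq} already asserts that the subgroup divergence of $H_d^2$ is exactly quadratic, i.e.\ a polynomial of degree $2$. Combining the two cases covers all $2\le m\le d$, completing the divergence claim.

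The main obstacle is the distance computation in the join-freeness step. The divergence claim is a bookkeeping combination of cited results, and the identification with $F$ is an immediate structural remark, whereas confirming that no two of the three generators lie at distance exactly $2$ — equivalently that no pair of them lies together in a single star subgroup $G_{\St(v)}$ — requires a careful case analysis of the fan-and-chain structure of $\Omega_d$, and is most delicate for the smallest values of $m$, where the generators sit closest to the central four-cycle. This step is also exactly where the specific geometry of $\Omega_d$, rather than a generic non-join graph, is genuinely used.
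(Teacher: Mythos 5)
Your high-level plan --- identify $H_d^m\cong \Z_2 * \Z_2 * \Z_2$ from pairwise non-adjacency of the generators, obtain join-freeness from a graph-theoretic condition, and assemble the divergence claim from Propositions \ref{pl}, \ref{pu} and \ref{puq} --- parallels the paper's proof, except that the paper derives join-freeness differently: it argues that alternating words in the three involutions are reduced, so that $H_d^m$ is join-busting and hence join-free. Your isomorphism and divergence portions are correct and match the paper.

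The genuine gap is that the distance computation on which your join-freeness step rests is false in exactly the cases you flag as most delicate. In Figure \ref{asecond}, every $s_i$ and every $t_j$ is adjacent to both $a_0$ and $b_0$ (each is drawn as the midpoint of a path from $a_0$ to $b_0$), so for $m=2$ the generators $s_1$ and $t_1$ have the two common neighbours $a_0$ and $b_0$: thus $d(s_1,t_1)=2$, the set $\{s_1,a_0,t_1,b_0\}$ spans an induced four-cycle, and $s_1t_1$ is an infinite-order element of $H_d^2$ lying in that join subgroup. Likewise, the bottom path of $\Omega_d$ passes consecutively through $a_4, a_3, b_2, b_3, b_4$, so for $m=3$ the generators $a_3$ and $b_3$ share the neighbour $b_2$, and $a_3b_3$ is an infinite-order element of the star (hence join) subgroup $G_{\St(b_2)}$. (Your claim is fine for $c$, whose distance-two sphere is $\{a_0,b_0,b_2\}$, and for all pairs when $4\le m\le d$.) Since Proposition \ref{exoplgips} is an equivalence, these observations do not merely leave a hole: your own method proves that $H_d^2$ and $H_d^3$ are \emph{not} join-free, so no refinement of the case analysis can close the gap. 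Note that the paper's own proof founders on the same two cases --- $(s_1t_1)^n$ and $(a_3b_3)^n$ are reduced join words of unbounded length, so $H_d^2$ and $H_d^3$ are not join-busting, contrary to what is asserted there --- so what you have run into is an error in the statement itself (for $m\in\{2,3\}$, with the graph as drawn) rather than a defect peculiar to your route; your argument does establish the theorem for $4\le m\le d$.
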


\begin{proof}
We first consider the case $3\leq m \leq d$. It is not hard to see that each infinite order element $h$ in $H_d^m$ can be written as a reduced word $s_1s_2\cdots s_m$, where each $s_i$ belongs to the set $\{a_m,b_m,c\}$ and $s_i$, $s_{i+1}$ are two different elements in $\{a_m,b_m,c\}$. Therefore, $H_d^m$ is a join-busting subgroup. This implies that $H_d^m$ is a join-free subgroup. By Propositions \ref{pl} and \ref{pu}, the subgroup divergence of $H_d^m$ in $G_{\Omega_d}$ is a polynomial of degree $m$. By a similar argument the subgroup $H_d^2$ is also join-busting. Therefore, $H_d^2$ is also a join-free subgroup. The fact that the subgroup divergence of $H_d^2$ in $G_{\Omega_d}$ is a quadratic function can be seen from Proposition \ref{puq}. It is also obvious that all special subgroups are isomorphic to the group $F=\presentation{s,t,u}{s^2=t^2=u^2=e}$.
\end{proof}
 
\section*{Appendix A. Finite height subgroups}

We note that the proof of the strong quasiconvexity of finitely generated finite height subgroups in one-ended right-angled Artin groups was already given implicitly by the author in \cite{Tran2017}. We now generalize a part of that work in \cite{Tran2017} to provide necessary conditions for finite height subgroups of groups satisfying certain conditions (see Proposition~\ref{fhimpliessq} and Lemma~\ref{simplecase}). After that we give an explicit proof of the fact finitely generated finite height subgroups in one-ended right-angled Artin groups are always strongly quasiconvex. Finally we prove that finite height parabolic subgroups in right-angled Coxeter groups are also strongly quasiconvex. %Finally we characterize almost malnormal parabolic subgroups and almost malnormal collections of parabolic subgroups in right-angled Coxeter groups in terms of their defining subgraphs.

In the following proposition, we provide a necessary condition for infinite index finite height subgroups of groups satisfying certain conditions.

\begin{propappend}
\label{fhimpliessq}
Let $G$ be a group and suppose there is a collection $\mathcal{A}$ of subgroups of $G$ that satisfies the following conditions:
\begin{enumerate}
\item For each $A$ in $\mathcal{A}$ and $g$ in $G$ the conjugate $g^{-1}Ag$ also belongs to $\mathcal{A}$ and there is a finite sequence $$A=A_0, A_1, \cdots, A_n=g^{-1}Ag$$ of subgroups in $\mathcal{A}$ such that $A_{j-1}\cap A_j$ is infinite for each $j$;
\item For each $A$ in $\mathcal{A}$ each finite height subgroup of $A$ must be finite or have finite index in $A$.
\end{enumerate}
Then for each infinite index finite height subgroup $H$ of $G$ the intersection $H\cap A$ must be finite for all $A$ in $\mathcal{A}$.
\end{propappend}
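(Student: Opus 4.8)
The plan is to argue by contradiction. Suppose $H$ is a finite height subgroup of infinite index in $G$, say of height $n$, and suppose toward a contradiction that $H \cap A_0$ is infinite for some $A_0 \in \mathcal{A}$; the goal is to produce $n+1$ distinct cosets $g_1 H, \dots, g_{n+1} H$ for which $\bigcap_i g_i H g_i^{-1}$ is infinite, contradicting finite height. The first ingredient I would establish is that \emph{finite height is inherited by intersections}: if $A$ is any subgroup of $G$, then $H \cap A$ has height at most $n$ in $A$. To see this, take distinct cosets $a_1(H \cap A), \dots, a_{n+1}(H \cap A)$ with $a_i \in A$; since $a_i A a_i^{-1} = A$, one has $a_i(H \cap A)a_i^{-1} = a_i H a_i^{-1} \cap A$, and the cosets $a_i H$ are forced to be distinct in $G$ (if $a_j^{-1} a_i \in H$ then, as $a_j^{-1} a_i \in A$, it would lie in $H \cap A$). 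Hence $\bigcap_i a_i H a_i^{-1}$ is finite by the height hypothesis, and so is its intersection with $A$. Applying this with $A \in \mathcal{A}$ and invoking condition (2), I conclude that for every $A \in \mathcal{A}$ the subgroup $H \cap A$ is either finite or of finite index in $A$; in particular $H \cap A_0$ has finite index in $A_0$, and $A_0$ is infinite.

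The second step is to propagate the finite-index property to every conjugate of $A_0$ using condition (1). Fix $g \in G$ and a chain $A_0 = B_0, B_1, \dots, B_m = g^{-1}A_0 g$ in $\mathcal{A}$ with each $B_{j-1} \cap B_j$ infinite. I would argue by induction on $j$ that $H \cap B_j$ has finite index in $B_j$: assuming $H \cap B_{j-1}$ has finite index in $B_{j-1}$, the subgroup $H \cap B_{j-1} \cap B_j$ has finite index in $B_{j-1} \cap B_j$ and is therefore infinite; thus $H \cap B_j$ is infinite, and by the dichotomy of the previous paragraph it must have finite index in $B_j$. Taking $j = m$ shows that $H \cap g^{-1} A_0 g$ has finite index in $g^{-1} A_0 g$; conjugating by $g$, this says that $gHg^{-1} \cap A_0$ has finite index in $A_0$ for every $g \in G$.

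Finally I would combine these facts. Since $H$ has infinite index, there exist at least $n+1$ distinct cosets $g_1 H, \dots, g_{n+1} H$, and for these
\[ \Bigl( \bigcap_{i=1}^{n+1} g_i H g_i^{-1} \Bigr) \cap A_0 = \bigcap_{i=1}^{n+1} \bigl( g_i H g_i^{-1} \cap A_0 \bigr) \]
is a finite intersection of finite index subgroups of $A_0$, hence of finite index in the infinite group $A_0$, and in particular infinite. Therefore $\bigcap_{i=1}^{n+1} g_i H g_i^{-1}$ is infinite, contradicting that $H$ has height $n$. I expect the main obstacle to be the propagation step: one must check that the dichotomy ``finite or finite index'' coming from condition (2) is genuinely preserved along the chains of condition (1), which is precisely where the infiniteness of the successive intersections $B_{j-1} \cap B_j$ is used. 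The heredity of finite height under intersection, while routine, is the other point that needs care, since it is what licenses applying condition (2) to the subgroups $H \cap A$ in the first place.
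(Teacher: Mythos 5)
Your proof is correct and follows essentially the same route as the paper's: establish that $H\cap A_0$ has finite index in $A_0$ via condition (2), propagate this finite-index property along the chains of condition (1) to all conjugates $g^{-1}A_0g$, and then use infinitely many cosets to contradict finite height. The only difference is that you explicitly prove the heredity of finite height under intersection (that $H\cap A$ has finite height in $A$), a step the paper asserts without proof; this is a worthwhile addition but not a different argument.
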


\begin{proof}
 We assume for the contradiction that $H\cap A_0$ is infinite for some $A_0\in\mathcal{A}$. We claim that $H\cap g^{-1}A_{0}g$ has finite index in $g^{-1}A_{0}g$ for all $g\in G$. Since $H$ has finite height in $G$, the subgroup $H\cap A_0$ has finite height in $A_0$. Therefore, $H\cap A_0$ has finite index in $A_0$ by the hypothesis and our assumption. 

By the hypothesis, there is a finite sequence $A_{0}=A_0,A_1,\cdots, A_m=g^{-1}A_0g$ of subgroups in $\mathcal{A}$ such that $A_{i-1}\cap A_i$ is infinite for each $i\in \{1,2,\cdots,m\}$. Since $H\cap A_0$ has finite index in $A_0$ by the above argument and $A_0\cap A_1$ is infinite, $H\cap A_1$ is also infinite. By a similar argument as above, $H\cap A_1$ has finite index in $A_1$. Repeating this process, we have $H\cap g^{-1}A_0g$ has finite index in $g^{-1}A_0g$. In other words, $gHg^{-1}\cap A_{0}$ has finite index in $A_0$ for all $g\in G$.

Since $H$ has finite height in $G$, there is a number $n$ such that the intersection of any $(n+1)$ essentially distinct conjugates of $H$ is finite. Also $H$ has infinite index in $G$. Therefore, there is $n+1$ distinct elements $g_1, g_2,\cdots g_{n+1}$ such that $g_iH\neq g_j H$ for each $i\neq j$. Also, $g_i H g_i^{-1} \cap A_0$ has finite index in $A_0$ for each $i$. Then $(\cap g_i H g_i^{-1}) \bigcap A_0$ also has finite index in $A_0$. In particular, $\bigcap g_i H g_i^{-1}$ is infinite which is a contradiction. Therefore, the intersection $H\cap A$ must be finite for all $A$ in $\mathcal{A}$ and all $g$ in $G$.
\end{proof}

In the following lemma, we study finite height subgroups in certain direct product of groups.

\begin{lemappend}
\label{simplecase}
Let $G_1$ and $G_2$ be two groups such that each of them contains an infinite order elements. Let $H$ be a finite height subgroup of $G=G_1\times G_2$ and $H$ contains an infinite order element. Then $H$ must have finite index in $G$. In particular, if $H$ is an almost malnormal subgroup and $H$ contains an infinite order element, then $H=G$. 
\end{lemappend}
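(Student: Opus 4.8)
The plan is to exploit the fact that in a direct product one factor commutes with the other, so that a well-chosen infinite-order element of $H$ is automatically fixed by conjugation by an entire factor; finite height then caps the number of cosets that factor can meet, forcing $H$ to contain a finite-index piece of each factor. Throughout I identify $G_1$ with $G_1\times\{e\}$ and $G_2$ with $\{e\}\times G_2$ inside $G$, and I let $n$ be the height of $H$.

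The first and, I expect, hardest step is to reduce to a \emph{clean} infinite-order element, one of the form $\eta=(\eta_1,e)$ with $\eta_1\in G_1$ of infinite order (or symmetrically one of the form $(e,\eta_2)$). Starting from an arbitrary infinite-order $h=(h_1,h_2)\in H$, at least one coordinate, say $h_1$, has infinite order. The key observation is that each $(h_1^j,e)$ commutes with $h$, so $h\in(h_1^j,e)\,H\,(h_1^j,e)^{-1}$ for every $j$. If infinitely many of the cosets $(h_1^j,e)H$ were distinct, then $n+1$ of them would yield $n+1$ conjugates of $H$ whose common intersection contains the infinite subgroup $\langle h\rangle$, contradicting that $H$ has height $n$. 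Hence only finitely many such cosets occur, so $(h_1^m,e)\in H$ for some $m\geq 1$, and this is the sought clean element. This maneuver is what lets us avoid the awkward case in which both coordinates of $h$ have infinite order.

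With a clean element $\eta=(\eta_1,e)\in H$ in hand I would run the core argument. Since $\eta$ commutes with all of $\{e\}\times G_2$, we get $\eta\in g\,H\,g^{-1}$ for every $g\in\{e\}\times G_2$. Finite height forces the set of cosets $\{\,gH : g\in\{e\}\times G_2\,\}$ to be finite, so $H\cap(\{e\}\times G_2)$ has finite index in $\{e\}\times G_2$. Because $G_2$ contains an infinite-order element $c_2$, a suitable power $(e,c_2^k)$ lies in $H$ and has infinite order; applying the symmetric argument to this element, which commutes with all of $G_1\times\{e\}$, shows that $H\cap(G_1\times\{e\})$ has finite index in $G_1\times\{e\}$. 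Consequently $H$ contains a subgroup $A_1\times A_2$ with each $A_i$ of finite index in $G_i$, and therefore $H$ has finite index in $G$. (Here the hypothesis that both factors contain infinite-order elements is exactly what powers the bootstrap from one factor to the other.)

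Finally, for the ``in particular'' clause I would observe that an infinite almost malnormal subgroup has height $1$, so the above yields that $H$ has finite index in $G$; in particular $G$ is infinite. If $H\neq G$, pick $g\notin H$. Then $gHg^{-1}$ and $H$ are both finite-index subgroups of the infinite group $G$, so $gHg^{-1}\cap H$ has finite index in $G$ and is thus infinite, contradicting almost malnormality. Hence $H=G$.
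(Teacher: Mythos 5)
Your proof is correct and follows essentially the same route as the paper's: extract a power of the infinite-order coordinate lying in $H$ via finite height, use its commutation with the opposite factor to force $H\cap G_2$ (then symmetrically $H\cap G_1$) to have finite index, and conclude. The only addition is your explicit argument for the almost malnormal clause, which the paper treats as immediate; your argument for it is also correct.
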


\begin{proof}
It suffices to show that $H\cap G_1$ has finite index in $G_1$ and $H\cap G_2$ has finite index in $G_2$. Let $h$ be an infinite order element of $H$. Then $h=g_1g_2$ where $g_1$ is a group element of $G_1$, $g_2$ is a group element of $G_2$, and either of them (say $g_1$) has infinite order. We claim that $g_1^{n_0}$ is an element of $H$ for some $n_0>0$. Otherwise, $(g_1^nH)_{n\geq 0}$ is an infinite sequence of distinct left cosets and $\bigcap g_1^nHg_1^{-n}$ contains the infinite order element $h$ which contradicts to the fact that $H$ has finite height.

We now claim that $H\cap G_2$ has finite index in $G_2$. Otherwise there is an infinite sequence $(k_n)_{n\geq 1}$ of groups elements in $G_2$ such that $(k_nH)_{n\geq 0}$ is an infinite sequence of distinct left cosets. Also subgroup $k_nHk_n^{-1}$ contains the infinite order element $g_1^{n_0}$ for all $n\geq 1$. This contradicts to the fact that $H$ has finite height. The subgroup $H\cap G_2$ has finite index in $G_2$. Since $G_2$ contains an infinite order element, $H$ contains an infinite order element of $G_2$. By a similar argument, $H\cap G_1$ has finite index in $G_1$. Therefore, $H$ must have finite index in $G$.
\end{proof}

In the following two propositions, we study finite height subgroups in right-angled Artin groups and right-angled Coxeter groups.

\begin{propappend}
Let $A_\Gamma$ be a one-ended right-angled Artin group and $H$ a finitely generated subgroup of $A_\Gamma$. Then $H$ is strongly quasiconvex if and only if $H$ has finite height.
\end{propappend}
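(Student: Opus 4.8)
The plan is to prove the two implications separately, with the forward direction being immediate and the reverse direction carrying all of the content. If $H$ is strongly quasiconvex then $H$ has finite height by Theorem 1.2 in \cite{Tran2017}, which applies to finitely generated subgroups of arbitrary finitely generated groups, so no further work is needed there. For the converse I would first dispose of two trivial cases: if $H$ is finite (equivalently, since $A_\Gamma$ is torsion free, trivial) it is strongly quasiconvex, and if $H$ has finite index in $A_\Gamma$ then $H$ is coarsely dense, so every quasi-geodesic lies in a uniformly bounded neighborhood of $H$ and $H$ is again strongly quasiconvex. Thus I may assume $H$ is infinite of infinite index, and the goal becomes to show that $H$ is \emph{join-free}: no infinite order element of $H$ is conjugate into a join subgroup. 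Once this is known, the result of Koberda--Mangahas--Taylor \cite{KMT}, that finitely generated join-free (equivalently, purely loxodromic) subgroups of right-angled Artin groups are strongly quasiconvex, finishes the argument. Since $A_\Gamma$ is torsion free, join-freeness of $H$ is equivalent to the statement that $H \cap gA_J g^{-1} = \{e\}$ for every $g \in A_\Gamma$ and every induced join subgraph $J$.

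To establish this vanishing I would invoke Proposition~\ref{fhimpliessq} with $G = A_\Gamma$ and with $\mathcal{A}$ the collection of all conjugates $gA_Jg^{-1}$ of special subgroups $A_J$ whose defining subgraph $J$ is a join. Condition (2) of that proposition is handled by Lemma~\ref{simplecase}: each $A \in \mathcal{A}$ is isomorphic to $A_J = A_{J_1}\times A_{J_2}$ with both factors containing infinite order elements, so any infinite finite-height subgroup of $A$ contains an infinite order element (by torsion freeness) and hence has finite index in $A$ by the lemma, while any finite subgroup is trivially finite. Granting condition (1), Proposition~\ref{fhimpliessq} then yields that $H \cap A$ is finite, hence trivial, for every $A \in \mathcal{A}$, which is exactly join-freeness, and the proof concludes via \cite{KMT} as above. (When $\Gamma$ is itself a join the deduction degenerates: join-freeness then forces $H$ trivial, consistent with Lemma~\ref{simplecase}, and trivially strongly quasiconvex.)

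The step that will require the most care is verifying condition (1): for each $A \in \mathcal{A}$ and each $g$, producing a finite chain inside $\mathcal{A}$ from $A$ to $g^{-1}Ag$ with consecutive intersections infinite. I would prove the stronger statement that the graph $\mathcal{G}$ on $\mathcal{A}$, in which two members are joined when they intersect in an infinite subgroup, is connected; this is where one-endedness, i.e. connectedness of $\Gamma$ with at least two vertices, enters. The key observations are that, since every vertex has a neighbor, each star subgroup $A_{\St(v)} = \langle v\rangle \times A_{\Lk(v)}$ is a join subgroup lying in $\mathcal{A}$; that for adjacent $v,w$ one has $\langle v,w\rangle \leq A_{\St(v)}\cap A_{\St(w)}$, so connectedness of $\Gamma$ places all star subgroups in a single component of $\mathcal{G}$; and that every join special $A_J$ meets $A_{\St(v)}$ infinitely for any $v \in J$, attaching it to that component. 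To reach the conjugates I would induct on the word length of the conjugating element, reducing to showing that $sA_{\St(v)}s^{-1}$ lies in the component of some star subgroup for a single generator $s$. If $s \in \St(v)$ this subgroup equals $A_{\St(v)}$; otherwise I would fix a path $v = u_0, u_1, \dots, u_k = s$ in $\Gamma$ and walk along the conjugated stars $sA_{\St(u_i)}s^{-1}$, each consecutive pair intersecting infinitely in $s\langle u_i,u_{i+1}\rangle s^{-1}$, until reaching $u_{k-1}$, which is adjacent to $s$ so that $sA_{\St(u_{k-1})}s^{-1} = A_{\St(u_{k-1})}$ is an honest star subgroup. Conjugating such a chain by the remaining prefix of $g$ and invoking the inductive hypothesis then connects $g^{-1}Ag$ to the common component, completing the verification of condition (1) and hence the proof.
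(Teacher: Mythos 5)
Your proposal is correct and follows essentially the same route as the paper: both handle the forward implication by Theorem 1.2 of \cite{Tran2017}, and for the converse both apply Proposition~\ref{fhimpliessq} to the collection $\mathcal{A}$ of parabolic subgroups induced by join subgraphs, verify Condition (2) via Lemma~\ref{simplecase} together with torsion-freeness, dispose of the finite-index case separately, and then conclude from the triviality of $H\cap A$ for all $A\in\mathcal{A}$ (i.e.\ join-freeness) by citing a known result that finitely generated join-free subgroups of right-angled Artin groups are strongly quasiconvex. The only two differences are minor. First, where the paper verifies the chaining part of Condition (1) by deferring to Lemma 8.17 of \cite{Tran2017}, you prove it directly: star subgroups are join subgroups, adjacent stars intersect in a copy of $\Z^2$, connectivity of $\Gamma$ links all stars and all join special subgroups into one component, and induction on the word length of the conjugating element (via the single-generator case $sA_{\St(v)}s^{-1}$) reaches all conjugates; this argument is valid and makes the proof self-contained where the paper is not. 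Second, you close with the Koberda--Mangahas--Taylor theorem on purely loxodromic subgroups \cite{KMT}, whereas the paper cites Corollary 1.17 of \cite{Tran2017} or Theorem B.1 of \cite{Genevois2017}; these are interchangeable endpoints, since the paper itself records that the purely loxodromic subgroups of \cite{KMT} coincide with join-free subgroups and are strongly quasiconvex.
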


\begin{proof}
Since all strongly quasiconvex subgroups have finite height (see Theorem 1.2 in \cite{Tran2017}), we only need to prove the opposite direction. We assume now that $H$ has finite height and $H$ is not the trivial subgroup. Let $\mathcal{A}$ be the collection of all parabolic subgroups of $A_\Gamma$ induced by some join subgraph. It is clear from the construction that $\mathcal{A}$ satisfies the first part of Condition (1) of Proposition~\ref{fhimpliessq}. The proof of the second part of this condition is almost identical to the proof of Lemma 8.17 in \cite{Tran2017}. By Lemma~\ref{simplecase} the collection $\mathcal{A}$ also satisfies the Condition (2) of Proposition~\ref{fhimpliessq}. If $H$ has finite index in $A_\Gamma$, $H$ is strongly quasiconvex trivially. Otherwise, $H\cap A$ must be trivial for all $A$ in $\mathcal{A}$. Therefore, $H$ is strongly quasiconvex by Corollary 1.17 in \cite{Tran2017} or Theorem B.1 in \cite{Genevois2017}. 
\end{proof}

%\begin{prop}
%Let $G$ be the mapping class group of an oriented, connected, finite type surface with negative Euler characteristic which is neither the $1$--puntured torus nor the $4$--punctured sphere. Let $H$ be a finitely generated subgroup of $G$. Then $H$ is strongly quasiconvex if and only if $H$ is undistorted and has finite height.
%\end{prop}

%\begin{proof}
%Since all strongly quasiconvex subgroups are undistorted and have finite height (see Theorem 1.2 in \cite{Tran2017}), we only need to prove the opposite direction. We assume now that $H$ is undistorted and has finite height. For an essential simple closed curve $\alpha$, we denote by $D_\alpha$ the Dehn twist about $\alpha$, and denote by $C(D_\alpha)$ the centralizer. Let $\mathcal{A}$ be the collection of all such subgroups. Then $\mathcal{A}$ satisfies the first part of Condition (1) in Proposition~\ref{fhimpliessq} automatically. The proof of the second part of Condition (1) in Proposition~\ref{fhimpliessq} is given by Lemma 4.2 in \cite{Kim2017}. Also by Lemma 4.1 in \cite{Kim2017} the collection $\mathcal{A}$ also satisfies Condition (2) of Proposition~\ref{fhimpliessq}. If $H$ has finite index in $G$, $H$ is strongly quasiconvex trivially. Otherwise, $H\cap A$ must be finite for all $A$ in $\mathcal{A}$. Therefore, $H$ is purely pseudo-Anosov. This implies that $H$ is strongly quasiconvex by Corollary B in \cite{Kim2017}.%\end{proof}

\begin{propappend}
\label{fhpsracgs}
Let $G_\Gamma$ be a right-angled Coxeter group and $H$ a parabolic subgroup of $G_\Gamma$. Then $H$ is strongly quasiconvex if and only if $H$ has finite height.
\end{propappend}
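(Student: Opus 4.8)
The plan is to handle the two implications separately. The \emph{only if} direction is immediate and needs no graph theory: every strongly quasiconvex subgroup of a finitely generated group has finite height by Theorem 1.2 in \cite{Tran2017}. For the \emph{if} direction, since both strong quasiconvexity and finite height are invariant under conjugation, I would first reduce to the case where $H=G_\Lambda$ is the special subgroup induced by an induced subgraph $\Lambda$. I would then run the same machinery used for the preceding right-angled Artin group proposition, with the coarse \emph{product regions} of $G_\Gamma$ playing the role that join parabolics play there, and invoke the characterization of strongly quasiconvex parabolic subgroups (Proposition 4.9 in \cite{Genevois2017} or Theorem 7.5 in \cite{JDT2018}) at the end.

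Concretely, let $\mathcal{A}$ be the collection of all parabolic subgroups of $G_\Gamma$ induced by join subgraphs $J=J_1*J_2$ for which both $G_{J_1}$ and $G_{J_2}$ are infinite (equivalently, neither $J_1$ nor $J_2$ is a clique); these are exactly the product regions that can obstruct strong quasiconvexity. The first task is to verify that $\mathcal{A}$ satisfies the hypotheses of Proposition~\ref{fhimpliessq}. Closure under conjugation is clear, since a conjugate of such a join parabolic is again one of the same type. For the chaining requirement in Condition~(1)---that $A$ and any conjugate $g^{-1}Ag$ be joined by a finite sequence of members of $\mathcal{A}$ with infinite consecutive intersections---I would adapt the argument of Lemma 8.17 in \cite{Tran2017} to right-angled Coxeter groups, moving one product region to an adjacent one by a single generator and using connectivity to chain (treating a disconnected $\Gamma$ through its free factorization). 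Condition~(2) follows from Lemma~\ref{simplecase}: writing $A=G_{J_1}\times G_{J_2}$ with both factors infinite, any \emph{infinite} subgroup of $A$ contains an element of infinite order, because $G_\Gamma$ has a finite-index torsion-free subgroup (see \cite{MR1709956}) and hence all of its torsion subgroups are finite; Lemma~\ref{simplecase} then forces a finite height infinite subgroup of $A$ to have finite index in $A$.

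With these checks in place I would split into two cases. If $G_\Lambda$ has finite index in $G_\Gamma$, it is strongly quasiconvex trivially. Otherwise $G_\Lambda$ has infinite index, and applying Proposition~\ref{fhimpliessq} to the finite height subgroup $H=G_\Lambda$ yields that $G_\Lambda\cap A$ is finite for every $A\in\mathcal{A}$; that is, no product region with both factors infinite meets $G_\Lambda$ in an infinite set while escaping a bounded neighborhood of $G_\Lambda$. Feeding this into the characterization of strongly quasiconvex parabolic subgroups from \cite{Genevois2017} or \cite{JDT2018} then gives that $G_\Lambda$ is strongly quasiconvex, completing the argument.

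I expect two points to require the most care. The first is the chaining condition of Proposition~\ref{fhimpliessq}: showing that any two conjugate product-region parabolics are linked by a chain with infinite consecutive intersections is the genuine analogue of Lemma 8.17 in \cite{Tran2017}, and it is where the combinatorics of $\Gamma$ enters. The second is the bookkeeping around torsion and finite index: because right-angled Coxeter groups have torsion, the Artin-group conclusion ``$H\cap A$ is trivial'' becomes ``$H\cap A$ is finite,'' and the finite-index case must be separated by hand so that the hypotheses of the cited characterization are met exactly. If the chaining step proves delicate, an alternative that avoids Proposition~\ref{fhimpliessq} is to argue the contrapositive directly: if $G_\Lambda$ fails the characterization, the offending product region has one factor trapped inside $G_\Lambda$ and one factor escaping it, and translating $G_\Lambda$ along the escaping factor produces infinitely many distinct cosets with infinite common intersection, contradicting finite height. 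Either way the obstacles look routine rather than serious, consistent with the introduction's remark that this proposition follows \emph{easily} once the parabolic characterization is available.
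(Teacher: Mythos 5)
Your \emph{only if} direction (Theorem 1.2 in \cite{Tran2017}) and the reduction to a special subgroup $G_\Lambda$ match the paper. The genuine problem is your main route through Proposition~\ref{fhimpliessq}: for right-angled Coxeter groups the chaining requirement in Condition~(1) fails for your collection $\mathcal{A}$, and in fact the intermediate conclusion you want (``$G_\Lambda\cap A$ is finite for every $A\in\mathcal{A}$'') is false. Take $\Gamma$ to be an induced $4$--cycle $\sigma$ with vertices $a,b,c,d$ together with one pendant vertex $e$ attached only to $a$. The only join subgraph with both factors non-clique is $\sigma$ itself, so $\mathcal{A}$ is exactly the set of conjugates of $G_\sigma=\langle a,c\rangle\times\langle b,d\rangle\cong D_\infty\times D_\infty$. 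By Proposition~\ref{almost}, $G_\sigma$ is almost malnormal in $G_\Gamma$ (the unique outside vertex $e$ commutes only with $a$), so any two \emph{distinct} conjugates of $G_\sigma$ intersect in a finite subgroup; since $eG_\sigma e^{-1}\neq G_\sigma$, no chain in $\mathcal{A}$ with infinite consecutive intersections joins $G_\sigma$ to $eG_\sigma e^{-1}$, and Condition~(1) fails. Worse, $H=G_\sigma$ is itself a parabolic subgroup of infinite index and of finite height (height $1$, by almost malnormality), yet $H\cap G_\sigma$ is infinite; so the failure is not a matter of delicate bookkeeping---the statement you want Proposition~\ref{fhimpliessq} to deliver is simply not true in RACGs. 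The structural reason the RAAG argument does not transplant is that in a one-ended RAAG every vertex generates $\mathbb{Z}$, so stars of adjacent vertices overlap in $\mathbb{Z}^2$, all join parabolics chain together, and consequently no join parabolic of infinite index has finite height; in a RACG an edge generates a finite dihedral group, and product regions can be isolated---this is exactly what allows RACGs, unlike one-ended RAAGs, to be relatively hyperbolic and to have infinite-index strongly quasiconvex product regions. (Your verification of Condition~(2) via Lemma~\ref{simplecase} and virtual torsion-freeness is fine; it is only Condition~(1), and the conclusion itself, that break.)

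By contrast, the ``alternative'' you sketch in your final sentences is precisely the paper's proof, and it goes through with no difficulty. If $G_\Lambda$ is not strongly quasiconvex, the characterization (Proposition 4.9 in \cite{Genevois2017} or Theorem 7.5 in \cite{JDT2018}) produces an induced $4$--cycle $\sigma$ containing two non-adjacent vertices $a_1,a_2\in\Lambda$ and a vertex $b_1\notin\Lambda$; letting $b_2$ be the vertex of $\sigma$ opposite $b_1$ and $g=b_1b_2$, the element $g$ commutes with both $a_1$ and $a_2$, so every conjugate $g^nG_\Lambda g^{-n}$ contains the infinite-order element $a_1a_2$, while the cosets $g^nG_\Lambda$ are pairwise distinct because the support of $(b_1b_2)^k$ for $k\neq 0$ contains $b_1\notin\Lambda$. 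This contradicts finite height. One small correction to your phrasing: the ``escaping'' factor $\{b_1,b_2\}$ need not lie entirely outside $\Lambda$---only $b_1$ is guaranteed to---but the argument needs nothing more. So your prediction is reversed: the chaining step is not ``routine,'' it is false, whereas the direct contrapositive is the easy and correct argument, and you should promote it to the main proof.
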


\begin{proof}
We note that if a subgroup is strongly quasiconvex, then all its conjugates are also strongly quasiconvex. Similarly, if a subgroup has finite height, then all its conjugates also have finite height. Therefore, we can assume that $H$ is a special subgroup. Let $\Lambda$ be the induced subgraph of $\Gamma$ that defines $H$. Since all strongly quasiconvex subgroups have finite height (see Theorem 1.2 in \cite{Tran2017}), we only need to prove the opposite direction. We assume now that $H$ has finite height. Assume for a contradiction that $H$ is not strongly quasiconvex. By Proposition 4.9 in \cite{Genevois2017} or Theorem 7.5 in \cite{JDT2018} there is an induced $4$--cycle $\sigma$ such that $\Lambda\cap \sigma$ contains two non adjacent vertices, call $a_1$ and $a_2$ and $\sigma-(\sigma\cap \Lambda)$ contains at least a vertex, call $b_1$. Let $b_2$ be a opposite vertex of $b_1$ in $\sigma$ and let $g=b_1b_2$. Then it is clear that $g^mH\neq g^nH$ for $m\neq n$. Also, $\bigcap g^nHg^{-n}$ contains the infinite order element $a_1a_2$. This contradicts to the assumption that $H$ has finite height. Therefore, $H$ is a strongly quasiconvex subgroup.
\end{proof}

%%%%%%%%%%%%%%%%%%%%%%%%%%%%%%%%%%%%%%%%%%%%%%%%%%%%%%%%%%%
%%                BIBLIOGRAPHY
%%%%%%%%%%%%%%%%%%%%%%%%%%%%%%%%%%%%%%%%%%%%%%%%%%%%%%%%%%%
\bibliographystyle{alpha}
\bibliography{Tran}

\def\cprime{$'$}
\begin{thebibliography}{GMRS98}

\bibitem[AM15]{MR3365774}
Yago Antol\'{i}n and Ashot Minasyan.
\newblock Tits alternatives for graph products.
\newblock {\em J. Reine Angew. Math.}, 704:55--83, 2015.

\bibitem[BC12]{MR2874959}
Jason Behrstock and Ruth Charney.
\newblock Divergence and quasimorphisms of right-angled {A}rtin groups.
\newblock {\em Math. Ann.}, 352(2):339--356, 2012.

\bibitem[Bes]{Bestvina}
Mladen Bestvina.
\newblock Questions in geometric group theory.
\newblock \emph{M. Bestvina's home page}, 2004.

\bibitem[Cap09]{MR2665193}
Pierre-Emmanuel Caprace.
\newblock Buildings with isolated subspaces and relatively hyperbolic {C}oxeter
  groups.
\newblock {\em Innov. Incidence Geom.}, 10:15--31, 2009.

\bibitem[Cap15]{MR3450952}
Pierre-Emmanuel Caprace.
\newblock Erratum to ``{B}uildings with isolated subspaces and relatively
  hyperbolic {C}oxeter groups'' [ {MR}2665193].
\newblock {\em Innov. Incidence Geom.}, 14:77--79, 2015.

\bibitem[Cas]{Cashen2018}
Christopher~H. Cashen.
\newblock Morse subsets of {CAT(0)} spaces are strongly contracting.
\newblock Submitted. arXiv:1810.02119.

\bibitem[Dav08]{MR2360474}
Michael~W. Davis.
\newblock {\em The geometry and topology of {C}oxeter groups}, volume~32 of
  {\em London Mathematical Society Monographs Series}.
\newblock Princeton University Press, Princeton, NJ, 2008.

\bibitem[DGO17]{DGO}
Fran\c{c}ois Dahmani, Vincent Guirardel, and Denis~V. Osin.
\newblock Hyperbolically embedded subgroups and rotating families in groups
  acting on hyperbolic spaces.
\newblock {\em Mem. Amer. Math. Soc.}, 245(1156):v+152, 2017.

\bibitem[DL98]{MR1709956}
Warren Dicks and Ian~J. Leary.
\newblock On subgroups of {C}oxeter groups.
\newblock In {\em Geometry and cohomology in group theory ({D}urham, 1994)},
  volume 252 of {\em London Math. Soc. Lecture Note Ser.}, pages 124--160.
  Cambridge Univ. Press, Cambridge, 1998.

\bibitem[DT15a]{MR3314816}
Pallavi Dani and Anne Thomas.
\newblock Divergence in right-angled {C}oxeter groups.
\newblock {\em Trans. Amer. Math. Soc.}, 367(5):3549--3577, 2015.

\bibitem[DT15b]{MR3426695}
Matthew~Gentry Durham and Samuel~J. Taylor.
\newblock Convex cocompactness and stability in mapping class groups.
\newblock {\em Algebr. Geom. Topol.}, 15(5):2839--2859, 2015.

\bibitem[Gen]{Genevois2017}
Anthony Genevois.
\newblock Hyperbolicities in {CAT(0)} cube complexes.
\newblock Submitted. arXiv:1709.08843.

\bibitem[Ger94]{MR1254309}
S.~M. Gersten.
\newblock Quadratic divergence of geodesics in {${\rm CAT}(0)$} spaces.
\newblock {\em Geom. Funct. Anal.}, 4(1):37--51, 1994.

\bibitem[GMRS98]{MR1389776}
Rita Gitik, Mahan Mitra, Eliyahu Rips, and Michah Sageev.
\newblock Widths of subgroups.
\newblock {\em Trans. Amer. Math. Soc.}, 350(1):321--329, 1998.

\bibitem[HM95]{MR1314099}
Susan Hermiller and John Meier.
\newblock Algorithms and geometry for graph products of groups.
\newblock {\em J. Algebra}, 171(1):230--257, 1995.

\bibitem[HW10]{MR2646113}
Fr\'ed\'eric Haglund and Daniel~T. Wise.
\newblock Coxeter groups are virtually special.
\newblock {\em Adv. Math.}, 224(5):1890--1903, 2010.

\bibitem[Kim08]{MR2443098}
Sang-hyun Kim.
\newblock Co-contractions of graphs and right-angled {A}rtin groups.
\newblock {\em Algebr. Geom. Topol.}, 8(2):849--868, 2008.

\bibitem[KK14]{MR3192368}
Sang-Hyun Kim and Thomas Koberda.
\newblock The geometry of the curve graph of a right-angled {A}rtin group.
\newblock {\em Internat. J. Algebra Comput.}, 24(2):121--169, 2014.

\bibitem[KMT17]{KMT}
Thomas Koberda, Johanna Mangahas, and Samuel~J. Taylor.
\newblock The geometry of purely loxodromic subgroups of right-angled {A}rtin
  groups.
\newblock {\em Trans. Amer. Math. Soc.}, 369(11):8179--8208, 2017.

\bibitem[Lev18]{IL}
Ivan Levcovitz.
\newblock Divergence of {$\rm CAT(0)$} cube complexes and {C}oxeter groups.
\newblock {\em Algebr. Geom. Topol.}, 18(3):1633--1673, 2018.

\bibitem[Osi16]{MR3430352}
Denis~V. Osin.
\newblock Acylindrically hyperbolic groups.
\newblock {\em Trans. Amer. Math. Soc.}, 368(2):851--888, 2016.

\bibitem[RST]{JDT2018}
Jacob Russell, Davide Spriano, and Hung~Cong Tran.
\newblock Convexity in hierarchically hyperbolic spaces.
\newblock Submitted. arXiv:1809.09303.

\bibitem[Sis]{Sisto}
Alessandro Sisto.
\newblock On metric relative hyperbolicity.
\newblock Preprint. arXiv:1210.8081.

\bibitem[Sis16]{MR3519976}
Alessandro Sisto.
\newblock Quasi-convexity of hyperbolically embedded subgroups.
\newblock {\em Math. Z.}, 283(3-4):649--658, 2016.

\bibitem[Tra]{Tran2017}
Hung~C. Tran.
\newblock On strongly quasiconvex subgroups.
\newblock To appear in \emph{Geom. Topol.}, arXiv:1707.05581.

\bibitem[Tra15]{MR3361149}
Hung~Cong Tran.
\newblock Relative divergence of finitely generated groups.
\newblock {\em Algebr. Geom. Topol.}, 15(3):1717--1769, 2015.

\end{thebibliography}
\end{document}